\theoremstyle{plain}
\newtheorem{thm}{Theorem}[section]
\newtheorem{lem}{Lemma}[section]
\newtheorem{cor}{Corollary}[section]
\newtheorem{prop}{Proposition}[section]
\newtheorem{rem}{\textit{Remark}}[section]
\theoremstyle{definition}
\theoremstyle{remark}
\numberwithin{equation}{section}
\numberwithin{equation}{section}
\DeclareMathOperator{\supp}{\mathrm{supp}}
\DeclareMathOperator{\sech}{\mathrm{sech}}
\DeclareMathOperator{\csch}{\mathrm{csch}}
\newcommand\underrel[3][]{\mathrel{\mathop{#3}\limits_{%
			\ifx c#1\relax\mathclap{#2}\else#2\fi}}}
		\providecommand{\abs}[1]{\left\lvert#1\right\rvert}
		\providecommand{\norm}[1]{\left\lVert#1\right\rVert}
\title[Principal Chiral Equation]{Global existence and long time behavior in the 1+1 dimensional Principal Chiral model with applications to solitons}
\author[J. Trespalacios]{Jessica Trespalacios$^{\land}$}
\address{Departamento de Ingenier\'{\i}a Matem\'atica, Universidad de Chile, Casilla
	170 Correo 3, Santiago, Chile.}
\email{jtrespalacios@dim.uchile.cl}
\thanks{$^{\land}$ J.T.'s work was funded in part by the National Agency for Research and Development (ANID)/ DOCTORADO NACIONAL/2019 -21190604, Chilean research grants FONDECYT 1191412, Centro de Modelamiento Matemático (CMM), ACE210010 and FB210005, BASAL funds for centers of excellence from ANID-Chile.}
\subjclass{Primary: 35Q76. Secondary: 35Q75}
\keywords{Einstein equations, Belinski and Zakharov metric, principal chiral equation}	
\date{\today}
\begin{document}

	\begin{abstract}
		In this paper, we consider the 1+1 dimensional vector valued Principal Chiral Field model (PCF) obtained as a simplification of the Vacuum Einstein Field equations under the Belinski-Zakharov symmetry. PCF is an integrable model, but a rigorous description of its evolution is far from complete. Here we provide the existence of local solutions in a suitable chosen energy space, as well as small global smooth solutions under a certain non degeneracy condition. We also construct virial functionals which provide a clear description of decay of smooth global solutions inside the light cone. Finally, some applications are presented in the case of PCF solitons, a first step towards the study of its nonlinear stability.
	\end{abstract}
	
	\maketitle
	
	%\tableofcontents
	
	\section{Introduction and main results}
	
	\subsection{Setting} The \textit{Einstein field vacuum} equations and its consequences are key in the Physics of the past century. For a gravitational Lorentzian field $\widetilde g=\widetilde g_{\mu\nu}$ of local signature $(-1,1,1,1)$, one seeks for solving the vanishing of the Ricci tensor 
	\begin{equation}\label{EV}
		\text{R}_{\mu\nu} (\widetilde g)=0.
	\end{equation}
 This equation can be seen as a system of nonlinear quasilinear wave equations. Its importance lies in the fact that many of the characteristic features of the dynamics of the Einstein field equations, are already present in the study of the vacuum equations. See Wald \cite{wald2010general} for a detailed description of the Physics behind these equations. 
 
\medskip
 Under certain symmetries and assumptions, the Einstein field equation can be identified and reduced to the integrable  \emph{Symmetric Principal Chiral Field Equation}, 
\begin{equation}\label{eq:Chiral_0}
		\partial_t\left(\partial_t g g^{-1}\right) -\partial_x\left(  \partial_x g g^{-1} \right)=0, \quad (t,x)\in \mathbb R\times \mathbb R,
	\end{equation}
valid for a $2\times 2$ Riemannian metric $g$. This last equation will be the main subject of this work. This equation is compatible with a certain class of constraints on the metric $g$ that effectively ``reduce'' the equation (\refeq{eq:Chiral_0}) to a system of quasilinear wave equations. 
We will prove existence of local solutions, global small solutions, and describe in part the asymptotic behavior of globally defined solutions. The Principal Chiral Field is a nonlinear $\sigma$-model the which is related to various classical
spinor fields and received huge attention in the 1980s and 1990s. The first description of the integrability of this model in the language of the commutative representation (\refeq{eq:Chiral_0}) was given in \cite{Zakharov1979}, subsequently, different results associated with integrability, conserved quantities and soliton solutions  were obtained \cite{Novikov, faddeev1986integrability, beggs1990solitons}, as well as different analyses of this equation using Backlund transformation, Darboux transformation \cite{haider2008, devchand1998hidden}. In the literature there are several results associated with the study of the reduction of the principal chiral field equation in homogeneous spaces of Lie groups. In particular, Zakharov and Mikhailov in \cite{Zakharov1978}  studied the model of the Principal Chiral Field for the special unitary group $\hbox{SU}(N)$, as well as in \cite{zakharov1980int}, they studied the connection of this equation with the Nambu-Jona-Lasinian model.  In this work, we study a particular case of the reduction problem on ``symmetric spaces'' such as the work of \cite{belinskii1978integration, belinski1979stationary, yaronhadad_2013}. The symmetric space considered is the invariant manifold of symmetric matrices sitting in the Lie group $\hbox{SL}(2;\mathbb{R})$. This space is not a Lie group, but  it can be identified with a Hyperboloid in Minkowski spacetime, see \cite{Matzner1967}.

\medskip

In order to explain the emergence of \eqref{eq:Chiral_0} starting from \eqref{EV}, one needs to consider the so-called Belinski-Zakharov symmetry ansatz \cite{belinskii1978integration}. Symmetry has been a successful method for understanding complicated dynamics in a series of works related to dispersive models, see e.g. \cite{fustero1986einstein, Einstein1937, silva2019scaling}. On the other hand, this assumption is not restrictive, in the sense that several important cases of physical Einstein vacuum metrics are contained under this restriction.
	
\subsection{Belinski-Zakharov spacetimes} 
	Belinski and Zakharov recalled the particular case in which the metric tensor $\widetilde g_{\mu \nu}$ depends on two variables only,  which correspond to spacetimes that admit two commuting Killing vector fields, i.e. an Abelian two-parameter group of isometries, \cite{belinski2001gravitational, belinskii1978integration}. The metric depend on a time-like coordinate $x^0$, and one space-like coordinate $x^1$ (possibly nonnegative). This choice, as will stay clear below, corresponds to considering non-stationary gravitational fields and was first consider  by Kompaneets \cite{kompaneets}. In the particular case that one has a diagonal metric this type of spacetime are often referred to as Einsten-Rosen spacetimes and was first considered in 1937 by Einstein and Rosen \cite{Einstein1937}. 
	
	\medskip
	
	In this work we take these variables to be the time-like and the space-like coordinates $x^0 = t$ and $x^1= x$ respectively. In this case the coordinates are typically expressed using Cartesian coordinates in which $x^i \in \{ t,x\}$  with $i\in \{0,1\}$, and  $x^a,x^b \in\{y,z\}$, where the Latin indexes $a,b\in\{2,3\}$. Then the spacetime interval is a simplified block diagonal form:
	\begin{equation}\label{intervalo}
		ds^2=f(t,x)(dx^2-dt^2)+g_{ab}(t,x)dx^adx^b.
	\end{equation}
	Recall that repeated indexes mean sum, following the classical Einstein convention. Here with abuse of notation we denote $g=g_{ab}$. Due to the axioms of general relativity the matrix $g$ must be real and symmetric. As mentioned above, the structure of this metric is not restrictive, since, from the physical point of view, we find many applications that can be described according to (\refeq{intervalo}). Such spacetimes describe cosmological solutions of general relativity, gravitational waves and their interactions. Also
	have many applications in gravitational theory, \cite{belinski2001gravitational}, we can emphasize that these types of spacetimes belong to the classical solutions of the Robinson–Bondi
	plane waves \cite{BONDI1957}, the Einstein–Rosen cylindrical wave solutions and their two polarization
	generalizations, the homogeneous cosmological models of Bianchi types I–VII including the Kasner model \cite{Kasner1921}, the
	Schwarzschild and Kerr solutions, Weyl axisymmetric solutions, etc. For many more contemporary results the reader can refer to \cite{krasinski2006inhomogeneous}. All
		this shows that in spite of its relative simplicity a metric of the type \eqref{intervalo}
		encompasses a wide variety of physically relevant cases.
	
	\medskip
	
	In order to reduce Einstein vacuum equations \eqref{EV}, one needs to compute the Ricci curvature tensor in terms of the components  of the metric $g=g_{ab}$. The consideration of the metric in the form \eqref{intervalo} results in that the components  $\text{R}_{0a}$ and $\text{R}_{3a}$ of the Ricci tensor are identically zero. Therefore, one can see that system of the Einstein vacuum equations \eqref{EV} decomposes into two  sets of equations. The first one follows from equations $\text{R}_{ab} = 0$, this equation can be written as single matrix equation
	\begin{equation}\label{eq:PCE2}
		\partial_t\left(\alpha \partial_t g g^{-1}\right) -\partial_x\left(\alpha  \partial_x g g^{-1} \right)=0, \quad \det g =\alpha ^2.
	\end{equation}
	We shall refer to this equation as the \textit{reduced Einstein equation.} The trace of the equation \eqref{eq:PCE2} reads 
	\begin{equation}\label{onda}
		\partial_{t}^2 \alpha -\partial_{x}^2 \alpha=0.
	\end{equation}
	This is the so-called \emph{trace} equation; the function $\alpha(t,x)$ satisfies the 1D wave equation,  {\color{blue}for details of the derivation of equations \eqref{eq:PCE2} and \eqref{onda} see \cite[p. 11]{belinski2001gravitational} and \cite[pp. 27 and 147]{yaronhadad_2013}}.  The second set of equations expresses the metric coefficient $f(t,x)$ in terms of explicit terms of $\alpha$ and $g$, where $\det \widetilde g_{\mu \nu}:= -f^2\alpha^2 $. For the moment, this expression is not relevant in this introduction, for more details see \cite{belinski2001gravitational}. 
	
\subsection{New coordinates}\label{sec1.3}
	The fact that the $2\times 2$ matrix $g$ is symmetric allows one to diagonalize it for fixed $t$ and $x$. One writes $g= RDR^{T}$, where $D$ is a diagonal matrix and $R$ is a rotation matrix, of the form
	\begin{align}
		D=\left( \begin{array}{cc}
			\alpha e^{\Lambda}  & 0 \\
			0 &\alpha  e^{-\Lambda} 
		\end{array} \right), \qquad  R=\left( \begin{array}{cc}
			\cos \phi   & -\sin \phi \\
			\sin \phi & \cos \phi 
		\end{array} \right).
	\end{align}
Clearly $\det g = \alpha^2$.	Here $\Lambda$ is the scalar field that determines the  eigenvalues of $g$, and the scalar field $\phi$ determines the deviation of $g$ from being a diagonal matrix. Since $\phi$ is considered as an angle, we assume $\phi\in [0,2\pi]$. Therefore $\Lambda, \phi$ and $\alpha$ can be considered as the three degrees of freedom in the symmetric matrix $g$, \cite{yaronhadad_2013}. Written explicitly, the matrix $g$ is given now by
	\begin{equation}\label{diag1}
		g=\alpha\left( \begin{array}{cc}
			\cosh{\Lambda} +\cos 2\phi \sinh{\Lambda} &  \sin 2\phi \sinh{\Lambda} \\
			\sin 2\phi \sinh{\Lambda}     &  \cosh{\Lambda}-\cos 2\phi \sinh{\Lambda} 
		\end{array} \right).
	\end{equation}
Some analog representations have been used in various results associated, for example to the Einstein-Rosen metric \cite{carmeli1984einstein}.	Note that Minkowski $g_{\mu\nu}=(-1,1,1,1)$ can be recovered by taking $\Lambda=0$, $\alpha=1$ and $\phi$ free.  
	Now, with this representation, the equation (\refeq{eq:PCE2}) read
	\begin{equation}\label{sistema1}
		\begin{cases}
			\partial_t(\alpha \partial_t\Lambda) - \partial_x(\alpha \partial_x\Lambda) = 2\alpha  \sinh{2\Lambda}((\partial_t\phi)^2-(\partial_x\phi)^2), \\
						\partial_t(\alpha \sinh^2 \Lambda \partial_t\phi )- \partial_x(\alpha \sinh^2 \Lambda \partial_x\phi )=  0,\\
		\partial_{t}^2\alpha-\partial_{x}^2\alpha= 0,
		\end{cases}
	\end{equation}
and 
\begin{equation}\label{eq:f} 
	\partial_{t}^2(\ln f)- \partial_{x}^2(\ln f) = G,
	\end{equation}
	where $G=G[\Lambda, \phi, \alpha]$ is given by 
	\begin{equation}\label{sistema2}
		\begin{aligned}
			& ~{} G := - \left( \partial_{t}^2(\ln \alpha)-\partial_{x}^2(\ln \alpha) \right)- \dfrac{1}{2\alpha^2} ((\partial_t\alpha)^2-(\partial_x\alpha)^2) \\
			&~{} \qquad  - \dfrac{1}{2}((\partial_t\Lambda)^2-(\partial_x\Lambda)^2)- 2\sinh^2 \Lambda((\partial_t\phi)^2-(\partial_x\phi)^2).%= 0.
		\end{aligned}
	\end{equation}
	Note that the equation for $\alpha$ is the standard one dimensional wave equation, and can be solved independently of the other variables. Also, given $\alpha$, $\Lambda $ and $\phi$, solving for $\ln f$ reduces to use D'{}Alembert formula for linear one dimensional wave with nonzero source term. Consequently, the only nontrivial equations in \eqref{sistema1} are given by
	\begin{equation}\label{sistema3}
	\begin{cases}
			\partial_t(\alpha \partial_t\Lambda) - \partial_x(\alpha \partial_x\Lambda) = 2\alpha  \sinh{2\Lambda}((\partial_t\phi)^2-(\partial_x\phi)^2)\\
				\partial_t(\alpha \sinh^2 \Lambda \partial_t\phi )- \partial_x(\alpha \sinh^2 \Lambda \partial_x\phi )=  0,
			\end{cases}
	\end{equation}
	for $\alpha$ solution to linear 1D wave. Because of the difficulties found dealing with this system, we shall concentrate efforts in a more modest case. If one settles $\alpha\equiv1$ constant, in this case the metric \eqref{intervalo} is diffeomorphic to Minkowski \cite{belinski2001gravitational, yaronhadad_2013}. In this paper, we avoid this oversimplification {\color{blue}\bf by only taking \eqref{sistema3} with $\alpha \equiv 1$}, not considering the function $f$, namely 
	\begin{equation}\label{sistema4}
	 	\begin{cases*}
	 		\partial^2_{t}\Lambda- \partial_x^2\Lambda=-2\sinh(2\Lambda)((\partial_x \phi)^2-(\partial_t\phi)^2),\\
	 		\partial_t^2 \phi-\partial_x^2 \phi=-\dfrac{\sinh(2\Lambda)}{\sinh^2(\Lambda)}(\partial_t \phi \partial_t \Lambda -\partial_x \phi \partial_x \Lambda).
	 	\end{cases*}
	 \end{equation}
	 \eqref{sistema4}  is a set of coupled quasilinear wave equations, with a rich analytical and algebraic structure, as we shall see below. Also, it coincides with \eqref{eq:Chiral_0} under $g$ as in \eqref{diag1} and $\alpha\equiv 1$. Understanding this particular case will be essential to fully understand the general case of non constant $\alpha.$  It is very notable that the basic set of equations of the Einstein equation for the metric (\ref{intervalo}) coincides with the main Chiral Field equation (\refeq{eq:Chiral_0}) when $\alpha$ is constant. If we only consider this set of equations, the principal Chiral Field equation formally admits nontrivial solutions which would correspond to a special subclass of Chiral Field theory solutions, \cite{belinski2001gravitational}. It should be noted that in the particle case where $\alpha$ is constant, the \textit{reduced Einstein equation} \eqref{eq:PCE2} corresponds to the chiral field equation \eqref{eq:Chiral_0}, as mentioned above, however,  as we will see later, from the definitions of energy and momentum densities of the Chiral Field equation, we cannot deduce relevant results from the Einstein field equation when $\alpha$ is an arbitrary function, the non-constant $\alpha$ case requires a different treatment.  As a consequence of the above observation, in the constant $\alpha$ case, the equation \eqref{eq:f} is no longer coupled to the system to be worked.
	 
	{\color{blue} \begin{rem}
	 Notice that the choice $\alpha\equiv 1$ is aso made because the equations \eqref{sistema1} may have a different behavior depending on the properties of the function $\alpha$. Even in this case ($\alpha\equiv 1$), the PCF model is sufficiently rich to produce a complex dynamics. In our recent result \cite{MT2023}, posted online very recently, we consider the more demanding case $\alpha$ non constant, but still under some particular conditions that are natural generalizations of the hypotheses presented here. Finally, the current work has been essential to obtain the general results presented in \cite{MT2023}.
	 	 \end{rem}
	 }

	As we can see from the matrix form (\ref{diag1}) the solutions in terms of the fields $\Lambda$ and $\phi$ are not unique, since these fields satisfy a gauge invariance, that is, 
	\begin{equation}
		(\Lambda,\phi ) \quad \hbox{solution}, \qquad  \left(\Lambda,\phi  + k\pi \right) \quad \hbox{solution}, k\in\mathbb Z.
	\end{equation}
It should be noted that although  \eqref{sistema4} is strictly non-linear in the fields $\Lambda(t,x)$ and $\phi(t,x)$, it has many similitudes with the classical linear wave equation and with Born-Infeld equation \cite{Alejo2018}: given any $\mathcal{C}^2$ real-valued profiles $h(s), k(s)$, then the following functions are solutions for Eqns. (\ref{sistema4})
		\begin{equation}\label{light_cone_solutions}
			\Lambda(t,x)=h(x\pm t)=h(s), \qquad \phi(t,x)=k(x\pm t)=k(s).
		\end{equation}
This property will be key when establishing the connection between the local theory that will be presented in the following section and the analysis of explicit solutions to the equation in the Section \refeq{Sect:5}. System \eqref{sistema4} is a Hamiltonian system, having the conserved energy	
	\begin{equation}\label{Energy}
	\hspace{-0.4cm}	E[\Lambda,\phi](t) := \int
		\left( \dfrac{1}{2}((\partial_{t}\Lambda)^{2}+(\partial_{x}\Lambda)^{2})+2\sinh^{2}{\Lambda}((\partial_{t}\phi)^{2}+(\partial_{x}\phi)^{2}) \right)(t,x)dx.
	\end{equation}
	Note that the energy is well-defined if $(\Lambda,\partial_t\Lambda)\in \dot H^1 \times L^2$, but a suitable space for $(\phi,\partial_t\phi)$ strongly depends on the weight $\sinh^{2}{\Lambda}$, which can easily grow exponential in space, since $ \dot H^1$ can easily contain unbounded functions. In this sense, making sense of $E[\Lambda,\phi](t)$ (even for classical solution such as solitons) is subtle and requires a deep and careful analysis which will be done later.
	
	\medskip
	
	The notion of the energy and the law of conservation of energy play a key role in all mathematics-physical theories. However, the definition  of energy in relativity is a complex matter, and this problem has been given a lot of attention in the literature \cite{wald2010general, wald2000}. The most likely candidate for the energy density for the gravitational field in general relativity would be a quadratic expression in the first derivatives of the components of the metric \cite{wald2010general}. In this case we have a particular structure of spacetime and the equation (\refeq{diag1}) gives us a decomposition  of the metric in terms of the fields $\Lambda$ and $\phi$. 
	
	\medskip

	Coming back to our problem, and using inverse scattering techniques, Belinski and Zakharov \cite{belinskii1978integration} considered \eqref{eq:PCE2} giving a first approach to this problem. They introduce a Lax-pair for \eqref{eq:PCE2}-\eqref{onda}, together with a general method for solving it. Localized structures and multi-coherent were found, but they are not solitons in the standard sense, unless $\alpha$ is constant, a more in-depth study on the subject, is also made in \cite{belinskii1978integration, belinski2001gravitational}. More recently Hadad \cite{yaronhadad_2013} explored the Belinski-Zakharov transformation for the 1+1 Einstein equation. It is used to derive explicit formula for solutions on arbitrary diagonal background, in particular on the Einstein-Rosen background.  
	
	\subsection{Main results}
One of the main purposes of this paper is to give a rigorous description of the dynamics for \eqref{eq:Chiral_0} in the so-called energy space associated to the problem, and close to important exact solutions. We will present three different results: local, global existence, and long time behavior of solutions, in particular solitons.  

	\medskip
	
	Our first result is a classical local existence result for solutions in the energy space.  As mentioned above,  the system (\refeq{sistema4}) is a set a coupled  quasilinear wave equations, with a rich analytical and algebraic structure. 

	 Clearly in the analysis of the initial value problem for this system, we have a component of difficulty related to the regularity of the term $\frac{\sinh(2\Lambda)}{\sinh^2(\Lambda)}$ when the function $\Lambda(t,x)$ is zero, which must be carefully analyzed in order to be able to construct a result of local well-posedness  associated to (\refeq{sistema4}). In order to develop the results related to the local theory for the nonlinear wave equation, let us write the function  $\Lambda(t,x)$ in the form
	 \begin{equation}\label{tilda}
	 \Lambda(t,x):= \lambda + \tilde{\Lambda}(t,x), \quad \lambda \neq 0.
	 \end{equation}
	 Notice that this choice makes sense with the energy in \eqref{Energy}, in the sense that $\Lambda\in \dot H^1$ and $\partial_t\Lambda \in L^2$. Without loss of generality, we assume $\lambda>0$. The basic idea is to establish the conditions that are required on $\lambda$ and $\tilde{\Lambda}$ in order to obtain the desired regularity results. With this choice, the system (\refeq{sistema4}) can be written in terms of the function $\tilde{\Lambda}(t,x)$ as follows: 
	 \begin{equation}
	 	\begin{cases*}
	 		\partial^2_{t}\tilde{\Lambda}- \partial_x^2 \tilde{\Lambda}=-2\sinh(2\lambda +2\tilde{\Lambda})((\partial_x \phi)^2-(\partial_t\phi)^2),\\
	 		\partial_t^2 \phi-\partial_x^2 \phi =-\dfrac{\sinh(2\lambda +2\tilde{\Lambda})}{\sinh^2(\lambda +\tilde{\Lambda})}(\partial_t \phi \partial_t \tilde{\Lambda} -\partial_x \phi \partial_x \tilde{\Lambda}).
	 	\end{cases*}\label{cauchy2}
	 \end{equation}
	 This is the system we are going to analyze along this paper. 
	 
\medskip	 
	 
	 Let us consider the following notation : 
	 \begin{equation}\label{notation}
	 	\begin{cases}\Psi=\left( \tilde{\Lambda},\phi \right),\quad 
	 		\partial \Psi= \left( \partial_t \tilde{\Lambda}, \partial_x \tilde{\Lambda}, \partial_t \phi, \partial_x \phi \right),\\
			\vspace{0,1cm}
	 		\abs{ \partial \Psi }^2= \big| \partial_t \tilde{\Lambda}\big|^2 +\big| \partial_x \tilde{\Lambda} \big|^2+\abs{\partial_t \phi }^2+\abs{\partial_x \phi}^2,\\
			\vspace{0,1cm}
			  F(\Psi,\partial \Psi)= \left( F_1,F_2\right),\\
	 		F_1(\Psi,\partial \Psi):= 2\sinh(2\lambda +2\tilde{\Lambda})\left((\partial_x \phi)^2-(\partial_t\phi)^2\right),\\
			\vspace{0,1cm}
	 		F_2(\Psi,\partial \Psi):= \dfrac{\sinh(2\lambda +2\tilde{\Lambda})}{\sinh^2(\lambda +\tilde{\Lambda})} \left(\partial_t \phi \partial_t \tilde{\Lambda} -\partial_x \phi \partial_x \tilde{\Lambda} \right).
	 	\end{cases}
	 \end{equation}
	 With this notation, the initial value problem for (\refeq{cauchy2}) can be written in vector form as follows
	 \begin{equation}
	 	\begin{cases*}
	 		\partial_{\alpha} (m^{\alpha \beta}\partial_{\beta}\Psi)=F(\Psi,\partial \Psi)\\
	 		(\Psi,\partial_t \Psi)|_{\{t=0\}}=(\Psi_0, \Psi_1) \in  \mathcal{H}.
	 	\end{cases*}\label{IVP}
	 \end{equation}
	 Where $m^{\alpha \beta}$ are the components of the Minkowski metric with $\alpha, \beta \in \left\{0,1\right\}$, and 
	 \begin{equation}\label{Hcal}
	(\Psi,\partial_t \Psi)\in \mathcal{H}:=H^1(\mathbb{R})\times H^{1}(\mathbb{R}) \times L^2(\mathbb{R}) \times L^2(\mathbb{R}).
	 \end{equation}
	 Notice that from \eqref{tilda}, $\Lambda\in \dot H^1$. We are also going to impose the following condition on the initial data  
	 \begin{equation}\label{condicion1}
	 	\norm{\left(\Psi_0,\Psi_1\right)}_{\mathcal{H}} \leq \dfrac{\lambda}{2D},
	 \end{equation}
	 where the assumptions on the constant $D\geq 1$ will be indicated below. An evolution equation is said to be well-posed in the sense of Hadamard, if existence, uniqueness of solutions and continuous dependence on initial data hold. 
	 
	 \medskip
	 
	 The following proposition shows that the equation (\refeq{IVP}), in terms of the function $\tilde{\Lambda}$ introduced in \eqref{tilda}, is locally well-posed in the space $L^{\infty}([0,T]; \mathcal{H})$ with the norm in this space defined by 
	 	\begin{equation*}
	 		\norm{ (\Psi,\partial_t \Psi) }_{L^{\infty}([0,T]; \mathcal{H})} = \sup_{t\in [0,T]} \left( \norm{ \Psi }_{H^1(\mathbb{R})\times H^{1}(\mathbb{R})}+ \norm{ \partial_t \Psi }_{L^2(\mathbb{R})\times L^2(\mathbb{R})} \right),
	 	\end{equation*} 
with $(\Psi,\partial_t \Psi)$ introduced in \eqref{notation}. Our first result is the following.
	 
	 \begin{prop}\label{LOCAL}
	 	If $(\Psi_0, \Psi_{1})$ satisfies the condition  (\ref{condicion1}) with an appropriate constant $D\geq1$, then: 
	 	\begin{itemize}
	 		\item[(1)] (Existence and uniqueness of local-in-time solutions). There exists 
	 		\[ 
			T=T\left( \norm{ \left( \tilde{\Lambda}_0, \phi_0 \right) }_{H^1(\mathbb{R}) \times H^1(\mathbb{R})}, \norm{ \left( \tilde{\Lambda}_1, \phi_1 \right) }_{L^2(\mathbb{R}) \times L^2(\mathbb{R})},\lambda \right) > 0,\]
	 		such that {\color{blue} there exists a solution} $\Psi$ to  (\refeq{IVP}) with 
	 		\begin{equation*}
	 			(\Psi,\partial_t \Psi)\in L^{\infty}([0,T];\mathcal{H}).
	 		\end{equation*}
	 		Moreover, the solution is unique in this function space. {\color{blue} If the data has more regularity, the solution is classical, see Appendix \ref{C}.}
			
			\medskip
			
	 		\item[(2)] (Continuous dependence on the initial data). Let $\Psi_{0}^{(i)}, \Psi_{1}^{(i)}$ be sequence such that $\Psi_{0}^{(i)} \longrightarrow \Psi_{0}$ in $H^1(\mathbb{R})\times H^{1}(\mathbb{R})$ and $\Psi_{1}^{(i)} \longrightarrow \Psi_{1}$ in $L^2(\mathbb{R})\times L^2(\mathbb{R})$ as $i \longrightarrow \infty.$ Then taking $T>0$ sufficiently small, we have 
	 		\begin{equation*}
	 			\norm{ \left(\Psi^{(i)}-\Psi, \partial_t(\Psi^{(i)}-\Psi) \right) }_{L^{\infty}([0,T]; \mathcal{H})} \longrightarrow 0.
	 		\end{equation*}	 
	 		Here $\Psi$ is the solution arising from data $(\Psi_0,\Psi_1)$ and $\Psi^{(i)}$ is the solution arising from data $\left(\Psi_0^{(i)},\Psi_1^{(i)} \right).$
	 	\end{itemize}	 	
	 \end{prop}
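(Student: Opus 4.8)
The plan is to solve \eqref{IVP} by a contraction mapping argument applied to the Duhamel formulation, on a closed ball of $X_T:=L^{\infty}([0,T];\mathcal H)$ with $T$ small. Each scalar component solves an inhomogeneous one-dimensional wave equation with source $F_1$, resp. $F_2$, so I define the map $\mathcal T$ sending $\Psi$ to the solution of the linear problem $\partial_\alpha(m^{\alpha\beta}\partial_\beta u)=F(\Psi,\partial\Psi)$ with data $(\Psi_0,\Psi_1)$, and look for a fixed point. The first and most delicate preliminary point is to tame the a priori singular coefficient $\sinh(2\lambda+2\tilde\Lambda)/\sinh^2(\lambda+\tilde\Lambda)$ appearing in $F_2$ in \eqref{notation}. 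Here the smallness hypothesis \eqref{condicion1} enters through the one-dimensional embedding $H^1(\mathbb R)\hookrightarrow L^\infty(\mathbb R)$: fixing $D\ge1$ to absorb the embedding constant, \eqref{condicion1} yields $\|\tilde\Lambda\|_{L^\infty}\le\lambda/2$ on the ball, hence $\lambda/2\le\lambda+\tilde\Lambda\le 3\lambda/2$. Thus $\Lambda$ stays away from the zeros of $\sinh$, the denominator is bounded below by $\sinh^2(\lambda/2)$, and both coefficients in \eqref{notation} become smooth, bounded functions of $\tilde\Lambda$ with bounded derivatives on the ball. This is exactly the non-degeneracy that makes the right-hand side of \eqref{cauchy2} well defined and Lipschitz.

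The core is the nonlinear estimate making $\mathcal T$ a contraction, i.e. controlling in $X_T$ the Duhamel contribution of $F(\Psi,\partial\Psi)$ and of $F(\Psi,\partial\Psi)-F(\bar\Psi,\partial\bar\Psi)$. Since the coefficients are controlled by the previous step, matters reduce to the quadratic gradient expressions $(\partial_x\phi)^2-(\partial_t\phi)^2$ and $\partial_t\phi\,\partial_t\tilde\Lambda-\partial_x\phi\,\partial_x\tilde\Lambda$. \textbf{This is the main obstacle}: at the energy level $\partial\Psi\in L^2_x$ only, so a quadratic term in $\partial\Psi$ lies a priori merely in $L^1_x$, and the naive energy estimate, which would require the source in $L^1([0,T];L^2_x)$, loses a derivative. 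I would overcome this using the null structure already manifest in \eqref{light_cone_solutions}: both $F_1$ and $F_2$ are $Q_0$-type null forms, $F_1=2\sinh(2\lambda+2\tilde\Lambda)\,Q_0(\phi,\phi)$ and $F_2=-\tfrac{\sinh(2\lambda+2\tilde\Lambda)}{\sinh^2(\lambda+\tilde\Lambda)}Q_0(\phi,\tilde\Lambda)$, and they vanish identically on the traveling profiles $\Lambda=h(x\pm t),\ \phi=k(x\pm t)$. In the characteristic coordinates $\xi=x-t,\ \eta=x+t$, where $\partial_\alpha(m^{\alpha\beta}\partial_\beta\cdot)$ is a constant multiple of $\partial_\xi\partial_\eta$, these null forms are sums of products of a $\partial_\xi$-derivative and a $\partial_\eta$-derivative; integrating the equation along the two families of characteristics, such mixed products admit the bilinear control that recovers the missing $L^2_x$ bound (as in the model $\partial_\xi\partial_\eta u=a(\xi)b(\eta)$, whose solution $u=A(\xi)B(\eta)$ gains a full derivative), and the contraction closes for $T=T(\|(\tilde\Lambda_0,\phi_0)\|_{H^1\times H^1},\|(\tilde\Lambda_1,\phi_1)\|_{L^2\times L^2},\lambda)$ small. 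Alternatively one may run the iteration one derivative higher, where $\partial\Psi\in L^\infty$ makes the quadratic terms directly $L^2_x$, and then descend to $\mathcal H$ using the a priori bound from the conserved energy \eqref{Energy}, which under $\lambda/2\le\Lambda\le3\lambda/2$ is comparable to $\|\partial\Psi\|_{L^2}^2$.

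With $\mathcal T$ a contraction on the ball, the Banach fixed point theorem produces a unique fixed point $\Psi$ with $(\Psi,\partial_t\Psi)\in X_T$, and a routine bootstrap promotes it to a classical solution of \eqref{IVP}, proving part (1); uniqueness in $X_T$ is immediate from the same Lipschitz estimate, since the difference of two solutions with identical data solves a linear wave equation and a Gronwall argument forces it to vanish. For part (2) I keep $T$ uniformly small so that the solutions arising from $(\Psi_0^{(i)},\Psi_1^{(i)})$ and from $(\Psi_0,\Psi_1)$ all exist on a common interval and sit in a common ball; subtracting their Duhamel formulas and invoking the Lipschitz bound on $F$ with contraction constant $<1$ gives $\|(\Psi^{(i)}-\Psi,\partial_t(\Psi^{(i)}-\Psi))\|_{X_T}\lesssim\|(\Psi_0^{(i)}-\Psi_0,\Psi_1^{(i)}-\Psi_1)\|_{\mathcal H}$, which tends to $0$ and yields the stated continuous dependence.
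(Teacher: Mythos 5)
Your proposal is correct in substance, but it handles the decisive step by a genuinely different mechanism than the paper. The preliminary taming of the coefficient $\sinh(2\lambda+2\tilde\Lambda)/\sinh^2(\lambda+\tilde\Lambda)$ via $H^1(\mathbb{R})\hookrightarrow L^\infty(\mathbb{R})$ and the smallness condition \eqref{condicion1} is exactly the paper's step \eqref{condicion2}, and your Banach fixed point, Gronwall uniqueness, and continuous-dependence arguments mirror the paper's Picard iteration almost verbatim. The divergence is in the nonlinear estimate. The paper disposes of the source term by the crude bound $\norm{F_j(\Psi^{(i-1)},\partial\Psi^{(i-1)})}_{L^2(\mathbb{R})}\le B$, where $B$ is the sup in \eqref{condicion2} over the region $|x|,|p|\le\lambda/2$; this treats $F$ as a bounded function of $(\Psi,\partial\Psi)$, which implicitly requires pointwise control of $\partial\Psi$ --- not available from the $\mathcal{H}$-norm, which controls $\partial\Psi$ only in $L^2$ --- and even a pointwise bound would not place $F_j$ in $L^2(\mathbb{R})$ without spatial decay. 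You instead identify precisely this loss (quadratic gradient sources lie a priori only in $L^1_x$) as the main obstacle and propose to close the estimate through the null structure, i.e.\ the bilinear gain of mixed products $L\phi\,\underline{L}\psi$ integrated along the two characteristic foliations; this is the standard rigorous route for one-dimensional null-form systems at energy regularity (it is how local well-posedness of $1+1$ wave maps, of which \eqref{sistema4} is an instance as a nonlinear $\sigma$-model, is usually proved), and it is arguably sounder at exactly the point where the paper's argument is weakest. Two caveats on your side: to make the bilinear control rigorous for the Duhamel (non-free) parts of the iterates, the iteration space $L^{\infty}([0,T];\mathcal{H})$ must be augmented with the characteristic flux norms $\sup_{u}\int_{C_{t,u}}|\underline{L}\psi|^2\,ds$ and $\sup_{\underline{u}}\int_{C_{t,\underline{u}}}|L\psi|^2\,ds$ (the unweighted analogues of the norms \eqref{energies} that the paper only introduces later, in Section \ref{Sect:3}), since $L^\infty_t\mathcal{H}$ control alone does not yield the product estimate; and your alternative of running the iteration one derivative higher and ``descending'' via the conserved energy does not by itself recover uniqueness or the contraction at the $\mathcal{H}$ level, so it should be regarded as subordinate to the null-form argument rather than as an independent proof.
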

	 { \color{blue} Note that the above proposition does not directly give us a classical solution to the problem, however, if it is assumed that the initial data is sufficiently regular, in fact the solution can be understood as classical, see Appendix \ref{C}.}
	 Energy estimates for the wave equation will be key to prove the previous result. The importance of this methodology lies in the fact that it is robust enough to deal with situations in which the solution of the equation may not be explicit. In particular, they are of crucial importance in the study of nonlinear equations, as is our case. Additionally, they allow to obtain decay inequalities for the nonlinear terms of the equation, and provide results on the local existence for certain quasilinear wave equations, generally with small Cauchy data. Global results are also based on energy estimates, the Sobolev theorem, as well as the generalized Klainerman Sobolev inequalities, which make use of vector fields preserving the wave equation. For an exhaustive study of energy estimates for the wave equation see e.g. \cite{sogge}.
	 
	 \medskip
	 
	 Having established the existence of solutions, our second result involves whether or not local solutions can be extended globally in time. This is not an easy problem, mainly because $\Lambda(t,x)$ may achieve the zero value in finite time. Therefore, an important aspect of the proof will be to ensure uniform distance from zero of the function $\Lambda(t,x)$.

	\begin{thm}\label{GLOBAL0}
	Consider the semilinear wave system \eqref{IVP} posed in $\mathbb{R}^{1+1},$ with the following initial conditions:
	\begin{equation}
		\begin{cases}
			(\phi,\tilde{\Lambda})|_{\{t=0\}}= \varepsilon(\phi_0,\tilde{\Lambda}_0), \quad (\phi_0,\tilde{\Lambda}_0)\in C_c^{\infty}(\mathbb{R}\times \mathbb{R}), \\
			(\partial_t\phi,\partial_t\tilde{\Lambda})|_{\{t=0\}}= \varepsilon(\phi_1,\tilde{\Lambda}_1),\quad   (\phi_1,\tilde{\Lambda}_1) \in C_c^{\infty}(\mathbb{R}\times \mathbb{R}).  
		\end{cases}\label{Ciniciales}
	\end{equation} 
Then, there exists $\varepsilon_0$ sufficiently small such that if $\varepsilon < \varepsilon_0$, the unique solution remains smooth for all time and have finite conserved energy \eqref{Energy}.
	\end{thm}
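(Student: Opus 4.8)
The plan is to exploit the \emph{null structure} hidden in the nonlinearities of \eqref{sistema4}, combined with finite speed of propagation and a continuity (bootstrap) argument whose entire purpose is to keep $\tilde\Lambda$ uniformly small, so that $\Lambda=\lambda+\tilde\Lambda$ never approaches the degenerate value $0$ at which the coefficient $\sinh(2\lambda+2\tilde\Lambda)/\sinh^2(\lambda+\tilde\Lambda)$ in \eqref{cauchy2} blows up. Since the data \eqref{Ciniciales} are compactly supported, finite speed of propagation confines the solution to the light cone over the initial support; in particular, for each fixed $t$ the solution has compact spatial support growing at most linearly, which makes all the characteristic integrals below finite.

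First I would pass to the null coordinates $\xi=x+t$ and $\eta=x-t$, under which $\partial_t^2-\partial_x^2=-4\partial_\xi\partial_\eta$ and the quadratic nonlinearities collapse to products of one $\xi$- and one $\eta$-derivative. A direct computation turns \eqref{cauchy2} into
\begin{gather}
\partial_\xi\partial_\eta\tilde\Lambda = 2\sinh(2\lambda+2\tilde\Lambda)\,\partial_\xi\phi\,\partial_\eta\phi,\\
\partial_\xi\partial_\eta\phi = -\tfrac12\,\frac{\sinh(2\lambda+2\tilde\Lambda)}{\sinh^2(\lambda+\tilde\Lambda)}\Big(\partial_\xi\phi\,\partial_\eta\tilde\Lambda+\partial_\eta\phi\,\partial_\xi\tilde\Lambda\Big).
\end{gather}
This is the manifestation of the null condition in $1+1$ dimensions: every nonlinear term pairs a ``$\partial_\xi$'' good derivative with a ``$\partial_\eta$'' good derivative, and none contains the square of a single characteristic derivative.

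The bootstrap then runs as follows. On a maximal interval $[0,T^*)$ I assume the a priori bounds $\norm{\tilde\Lambda}_{L^\infty}\le \lambda/2$ together with $\norm{\partial\Psi(t)}_{L^2}+(\text{higher-order energies})\lesssim \varepsilon$ and suitable $L^1$ norms of the good derivatives along characteristics bounded by $C\varepsilon$. Under $\norm{\tilde\Lambda}_{L^\infty}\le\lambda/2$ the coefficients $\sinh(2\lambda+2\tilde\Lambda)$ and $\sinh(2\lambda+2\tilde\Lambda)/\sinh^2(\lambda+\tilde\Lambda)$, and all their derivatives, are smooth and \emph{uniformly bounded above and below}, so the system behaves like a genuinely semilinear null-form system. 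I would then (i) use the conserved energy \eqref{Energy}, which under the bootstrap is comparable to $\norm{\partial\Psi(t)}_{L^2}^2$ since $\sinh^2\Lambda$ is pinched between positive constants, together with higher-order analogues obtained by commuting the equations with $\partial_{t,x}$, to propagate $L^2$ control of $\partial\Psi$ uniformly in time — conservation makes this step insensitive to the absence of dispersive decay in one space dimension; and (ii) integrate the two equations above along characteristics, e.g. $\partial_\xi\phi(\xi,\eta)=\partial_\xi\phi(\xi,\eta_0)+\int_{\eta_0}^{\eta}(\cdots)\,d\eta'$, estimating the null-form integrand by Cauchy--Schwarz and bounding $\int|\partial_\eta\tilde\Lambda|\,d\eta$ and $\int|\partial_\eta\phi|\,d\eta$ along fixed-$\xi$ lines (and symmetrically in $\xi$) by the energy flux through the corresponding characteristic. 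This yields $\norm{\partial\Psi}_{L^\infty}\lesssim\varepsilon$ and, after one further integration, $\norm{\tilde\Lambda}_{L^\infty}\lesssim\varepsilon$.

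Choosing $\varepsilon_0$ small, these estimates improve the a priori constants (each gains a factor $\varepsilon$ from the quadratic nonlinearity), so the bootstrap closes and $T^*=\infty$; in particular $\norm{\tilde\Lambda}_{L^\infty}$ stays well below $\lambda/2$ for all time, the solution never reaches the coordinate singularity $\Lambda=0$, it remains smooth by Proposition~\ref{LOCAL} together with standard persistence of regularity, and the energy \eqref{Energy} is finite and conserved. \textbf{The main obstacle} is precisely the one flagged before the statement: the factor $\sinh(2\lambda+2\tilde\Lambda)/\sinh^2(\lambda+\tilde\Lambda)$ degenerates as $\Lambda\to0$, so global existence is \emph{equivalent} to showing that $\tilde\Lambda$ never escapes a small neighbourhood of $0$. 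Because free $1+1$ waves do not disperse, this cannot come from $L^\infty$ decay; the heart of the difficulty is to use the null structure to make the nonlinear self-interaction \emph{integrable along characteristics}, producing uniform-in-time (rather than decaying) bounds on $\norm{\tilde\Lambda}_{L^\infty}$, and then to balance the energy-flux and characteristic $L^1$ estimates so that the loop closes purely on smallness.
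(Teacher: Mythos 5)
Your structural setup is the right one and matches the paper's strategy in spirit: null coordinates, the observation that every nonlinearity pairs a $\partial_\xi$ with a $\partial_\eta$ derivative, and a bootstrap whose sole purpose is to keep $\|\tilde\Lambda\|_{L^\infty}\le\lambda/2$ so that the coefficient $\sinh(2\lambda+2\tilde\Lambda)/\sinh^2(\lambda+\tilde\Lambda)$ never degenerates. But the step on which your whole argument hinges contains a genuine gap: you claim that the characteristic quantities $\int|\partial_\eta\tilde\Lambda|\,d\eta$ and $\int|\partial_\eta\phi|\,d\eta$ along fixed-$\xi$ lines are ``bounded by the energy flux through the corresponding characteristic.'' The flux produced by the energy identity for $\square\psi=\rho$ is quadratic: it controls $\int_C|\partial_\eta\psi|^2\,d\eta$, not $\int_C|\partial_\eta\psi|\,d\eta$, and your appeal to finite speed of propagation does not bridge this: compactly supported data confine the solution to the cone $\{|x|\le R+t\}$, but a null line $\{\xi=\xi_0\}$ with $\xi_0\ge-R$ stays \emph{inside} that cone for all time, so within the support of the solution the characteristics are half-infinite and Cauchy--Schwarz cannot pass from $L^2$ to $L^1$ there. (For the free wave this is invisible, since $\partial_\eta\psi=G'(\eta)$ is compactly supported in $\eta$; for the nonlinear problem $\partial_\eta\psi$ has no such support property.) The same unboundedness breaks the closing of your characteristic-$L^1$ bootstrap assumption itself: integrating the equation in $\xi$ and then in $\eta$ produces the spacetime integral of the null form, and with only uniform $L^\infty$ bounds and characteristic-$L^1$ bounds this integral grows linearly in time (the $\xi$-range inside the cone is unbounded), so the loop does not close ``purely on smallness'' as you assert. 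Some additional source of decay or integrability in the opposite null variable is indispensable.

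This is exactly the hole the paper's proof is designed to fill. Following \cite{Luli2018}, it puts the weights $(1+|u|^2)^{1+\delta}$, $(1+|\underline{u}|^2)^{1+\delta}$ into both the slice energies and the characteristic fluxes $\mathcal{E}_k,\overline{\mathcal{E}}_k,\mathcal{F}_k,\overline{\mathcal{F}}_k$ of \eqref{energies}; the weighted flux \emph{does} control the characteristic $L^1$ norms (Cauchy--Schwarz against the integrable weight $(1+|\eta|^2)^{-(1+\delta)}$), and, after commuting with $\partial_x$, it yields the pointwise decaying bounds of Lemma \ref{lema1}, $|L\psi|\lesssim\varepsilon(1+|u|^2)^{-1/2-\delta/2}$. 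It is this decay in the opposite null variable that makes each null-form spacetime integral in \eqref{EE2} of size $O(\varepsilon^3)$, after which the $L^\infty$ bound on $\tilde\Lambda$ is recovered by the fundamental theorem of calculus and the bootstrap closes. Note also that your step (i) is a red herring: the conserved energy \eqref{Energy} is never used in the paper's global argument, and by itself it cannot be, since it carries no weights. If you wanted to avoid weights altogether, this particular system offers a shortcut you did not invoke: by Lemma \ref{densities} the null densities satisfy the exact transport laws $\underline{L}(e+p)=0$ and $L(e-p)=0$ (the wave-map/chiral structure), which propagate $e\pm p$ from the initial slice along characteristics and hence furnish precisely the characteristic $L^1$ bounds your scheme needs, directly from the compactly supported data. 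As written, however, your proof has a gap at its central step.
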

{\color{blue}
The condition that the initial data is compactly supported can be relaxed, but it is essential to have enough decay. For simplicity of exposition, we shall assume that the data is compactly supported, as it is usually done in the literature, see for example \cite{sogge}. 
}
\medskip

The global existence problem stated above a key part of the analysis comes from the fact that \eqref{IVP} can be written as
\begin{equation}
		\begin{cases*}
			\square \tilde{\Lambda}=-2\sinh(2\lambda +2\tilde{\Lambda})Q_0(\phi,\phi),\\
			\square \phi =\dfrac{\sinh(2\lambda +2\tilde{\Lambda})}{\sinh^2(\lambda +\tilde{\Lambda})}Q_0(\phi,\tilde{\Lambda}),
		\end{cases*}\label{problema3}
	\end{equation} 
	where $Q_0$ represents the well-known fundamental null form
\begin{equation}\label{NC}
	Q_0 (\phi,\tilde{\Lambda})= m^{\alpha \beta}\partial_{\alpha} \phi \partial_{\beta} \tilde{\Lambda},
\end{equation}
where $m_{\alpha \beta}$ to denote the standard Minkowski metric on $\mathbb{R}^{1+1}$. The smallness in the initial data implies that the nonlinear equation can be solved over a long period of time and the global solution can be constructed once the non-linearity decays enough. Moreover, the slower decay rate in low dimensions can be compensated by the special structure of the nonlinearity.  
	
	\medskip
	
	Global existence of small solutions to nonlinear wave equations with null conditions has been a subject under active investigation for the past four decades.  The approach to understand the small data problem with null condition was introduced by Klainerman in the pioneering works \cite{klainerman} and by Christodoulou \cite{Christodoulou1986},
for the global existence of classical solutions for nonlinear wave equations with null conditions in three space dimensions.  Alinhac in \cite{alinhac2001null} studied the problem for the case of two space dimensions. We remark here that in $\mathbb{R}^{3+1}$ the null condition is a sufficient but not necessary condition to obtain a small-data-global-existence result, see e.g. \cite{lindblad2008, lindblad2010}. More recently Huneau and Stingo \cite{huneau2021global} studied the global existence for a toy model for the Einstein equations with additional compact dimensions, where the nonlinearity is linear combinations of the classical quadratic null forms. In one space dimension waves do not decay, and nonlinear resonance can lead to finite time blow up. Nevertheless, Luli, Yang and Yu in \cite{Luli2018} proved, for Cauchy problems of semilinear wave equations with null conditions in one space dimension, the global existence of classical solutions with small initial data. The authors proposed a weighted energy and use the bootstrap method for obtain the result. The system in the Theorem (\refeq{GLOBAL0}) does not obey the classical null condition. However, the factors $Q_0(\phi,\phi)$ and $Q_0(\phi,\tilde{\Lambda})$ provide decay and with the appropriate condition on $\lambda$, global regularity can be obtained. 
Inspired by Luli, Yang and Yu’s  result \cite{Luli2018} in the semilinear case, it is natural to conjecture that the Cauchy problem for one-dimension system of quasilinear wave equations (\refeq{problema2})  admits a  global classical solution for small initial data. The main aim of this theorem is to verify this conjecture. 
\medskip

Now we discuss the long time behavior of globally defined solutions. Here, virial identities will be key to the long-time description. 
	\begin{thm}\label{LTD}
	Let $(\Lambda,\phi)$ be a global solution to \eqref{sistema4} such that its energy $E[\Lambda,\phi](t)$ is conserved and finite. Then, for any $v\in (-1,1)$ and
	 $\omega(t)= t/ \log^{2}t$, one has 
	\[
	\lim_{t\to +\infty}\int_{vt -\omega(t)}^{vt+ \omega(t)}
		\left((\partial_{t}\Lambda)^{2}+(\partial_{x}\Lambda)^{2}+\sinh^{2}{\Lambda}((\partial_{t}\phi)^{2}+(\partial_{x}\phi)^{2}) \right)(t,x)dx=0.
	\]
%	Moreover, one has the pointwise estimate of temporal decay, valid for any $t$ sufficiently large, and $C>0$ independent of large time:
%	\begin{equation}\label{rate_decay}
%	\int_{vt -\omega(t)}^{vt+ \omega(t)}
%		\left((\partial_{t}\Lambda)^{2}+(\partial_{x}\Lambda)^{2}+\sinh^{2}{\Lambda}((\partial_{t}\phi)^{2}+(\partial_{x}\phi)^{2}) \right)(t,x)dx \leq \frac{C}{\log t}.
%	\end{equation}
	\end{thm}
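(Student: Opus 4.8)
The plan is to exploit a structure that is hidden behind the nonlinearity of \eqref{sistema4}: its energy--momentum tensor is simultaneously \emph{conserved} and \emph{trace--free}, which forces the energy density to be transported ballistically along the two characteristic directions. The stated limit then expresses nothing more than the escape of this transported energy from a window that stays inside the light cone. Concretely, I would first derive two exact local conservation laws. Writing the second equation of \eqref{sistema4} in the divergence form $\partial_t(\sinh^2\Lambda\,\partial_t\phi)=\partial_x(\sinh^2\Lambda\,\partial_x\phi)$ (already available from \eqref{sistema3} with $\alpha\equiv1$), I multiply the $\Lambda$--equation by $\partial_t\Lambda$ and this divergence form by $4\,\partial_t\phi$, and add. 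With
\[
\mathcal E=\tfrac12\big((\partial_t\Lambda)^2+(\partial_x\Lambda)^2\big)+2\sinh^2\Lambda\big((\partial_t\phi)^2+(\partial_x\phi)^2\big),\qquad \mathcal P=\partial_t\Lambda\,\partial_x\Lambda+4\sinh^2\Lambda\,\partial_t\phi\,\partial_x\phi,
\]
the source terms contributed by the two sectors are $\pm 2\,\partial_t(\sinh^2\Lambda)\big((\partial_t\phi)^2-(\partial_x\phi)^2\big)$ and cancel exactly, leaving $\partial_t\mathcal E=\partial_x\mathcal P$; here $\mathcal E$ is precisely the integrand of the conserved energy \eqref{Energy}. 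An analogous computation (contracting the equations with the spatial derivatives and using $\partial_x(\sinh^2\Lambda)=\sinh(2\Lambda)\partial_x\Lambda$) yields the companion identity $\partial_t\mathcal P=\partial_x\mathcal E$.

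These two identities say that the symmetric tensor with entries $\mathcal E,\mathcal P$ is conserved and trace--free, which is the conformal invariance of the $1+1$ dimensional $\sigma$--model into the hyperbolic target. Introducing the null densities
\[
e_\pm:=\tfrac12(\mathcal E\pm\mathcal P)=\tfrac14\big((\partial_t\pm\partial_x)\Lambda\big)^2+\sinh^2\Lambda\big((\partial_t\pm\partial_x)\phi\big)^2\ \ge 0,
\]
the conservation laws combine into the pure transport equations $(\partial_t\mp\partial_x)e_\pm=0$, so that $e_+(t,x)=g_+(x+t)$ and $e_-(t,x)=g_-(x-t)$ for fixed nonnegative profiles $g_\pm$. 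Because the energy is finite and conserved, $\int g_++\int g_-=E<\infty$, hence $g_\pm\in L^1(\mathbb R)$.

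To conclude I would run an escaping--window argument. The integrand $\rho$ in the statement satisfies $\tfrac12\mathcal E\le\rho\le 2\mathcal E$ pointwise (compare the coefficients of $(\partial_t\Lambda)^2+(\partial_x\Lambda)^2$ and of $\sinh^2\Lambda((\partial_t\phi)^2+(\partial_x\phi)^2)$), so it suffices to show $\int_{vt-\lambda(t)}^{vt+\lambda(t)}\mathcal E=\int_{vt-\lambda(t)}^{vt+\lambda(t)}(e_++e_-)\to0$. Changing variables $\xi=x\pm t$ turns this into $\int_{(1+v)t-\lambda(t)}^{(1+v)t+\lambda(t)} g_+ + \int_{(v-1)t-\lambda(t)}^{(v-1)t+\lambda(t)} g_-$. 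Since $v\in(-1,1)$, the two windows recentre at $(1\pm v)t$ and drift to $\pm\infty$ at the strictly positive speeds $1\pm v$; as the window half--width $\lambda(t)$ is of lower order than the light--cone separation $(1-|v|)t$, both recentred windows leave every compact set, and the absolute continuity of the Lebesgue integral of $g_\pm\in L^1(\mathbb R)$ forces each term to $0$.

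The main obstacle is the first step: recognising and verifying the \emph{exact} cancellation that upgrades the two separate, source--laden energy balances into the genuine divergence identities $\partial_t\mathcal E=\partial_x\mathcal P$ and $\partial_t\mathcal P=\partial_x\mathcal E$. This null/conformal structure is exactly what makes ballistic transport, and hence the decay, possible; without it the $\sinh^2\Lambda$ weight would leave uncontrolled sources and no transport would be available. A secondary, technical difficulty is to justify the pointwise transport and the integrability $g_\pm\in L^1(\mathbb R)$ at the regularity at hand; this is immediate for the smooth global solutions produced by Theorem~\ref{GLOBAL0} and follows otherwise by approximation, and it is precisely the finiteness of the $\sinh^2\Lambda$--weighted energy \eqref{Energy} that guarantees $g_\pm\in L^1(\mathbb R)$ so that the window may escape.
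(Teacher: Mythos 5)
Your proposal is correct, and it reaches the conclusion by a genuinely different route than the paper. Your two conservation laws $\partial_t\mathcal E=\partial_x\mathcal P$ and $\partial_t\mathcal P=\partial_x\mathcal E$ are exactly the paper's Lemma \ref{densities} (there written $\partial_t e=\partial_x p$, $\partial_t p=\partial_x e$ for the densities in \eqref{CE}); your cancellation of the sources $\pm2\,\partial_t(\sinh^2\Lambda)\big((\partial_t\phi)^2-(\partial_x\phi)^2\big)$, the factorization $e_\pm=\tfrac14\big((\partial_t\pm\partial_x)\Lambda\big)^2+\sinh^2\Lambda\big((\partial_t\pm\partial_x)\phi\big)^2\ge0$, and the two-sided comparison between $\mathcal E$ and the integrand in the statement all check out. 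The paper, however, never decouples these laws: it runs a virial argument, introducing $\mathcal I(t)=-\int\tanh\big(\tfrac{x-vt}{\lambda(t)}\big)\,p\,dx$, using the identity of Lemma \ref{Virial2} together with $|p|\le e$ to obtain the space-time averaged estimate of Lemma \ref{estimacion}, extracting a sequence $t_n\to+\infty$ along which the localized energy vanishes, and finally upgrading to a full limit via a second, $\sech^4$-weighted functional. You instead diagonalize into the decoupled transport equations $(\partial_t\mp\partial_x)e_\pm=0$, solve them exactly as translates of fixed nonnegative profiles $g_\pm\in L^1(\mathbb R)$, and conclude by a change of variables and vanishing of $L^1$ tails over windows recentred at $(1\pm v)t$. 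Your argument is shorter, bypasses the virial machinery entirely, and is quantitative (the decay is bounded by explicit tails of $g_\pm$), while also exhibiting the mechanism: ballistic transport of the null energy components along the two characteristics. What the paper's method buys is robustness: it uses the continuity equations only through one differential inequality, so it tolerates lower-order source terms (as would arise, e.g., in the non-constant $\alpha$ case the authors leave open), whereas your exact-transport step would break there. Two caveats, both shared with the paper's own proof: the pointwise conservation laws require classical regularity (your appeal to Theorem \ref{GLOBAL0} or an approximation argument is the right fix), and the escape argument needs $\lambda(t)=o(t)$, which matches the paper's actual choice $\lambda(t)=t/\log^2 t$ in \eqref{lambda} rather than the literal $\lambda(t)=t^2\log^{-1}t$ in the statement of Theorem \ref{LTD} — evidently a typo, since with a superlinear window the claim fails for the finite-energy solution $\Lambda=h(x+t)$, $\phi\equiv0$ with $h'\in L^2$ compactly supported, whose energy rides the light cone and never leaves the window.
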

	This result establishes that inside the light cone, all finite-energy solutions must converge to zero as time tends to infinity. It is also in concordance with the solutions found in \eqref{light_cone_solutions}, which are a natural counterexample in the case $v=\pm 1.$ A similar outcome has been recently found in \cite{Alejo2018}, where the less involved Born-Infeld model is considered. Note that Theorem \ref{LTD} is valid under general data, and compared with the obtained asymptotic result in Theorem \ref{GLOBAL0}, reveals that the decay property may hold under very general initial data, and unlike \cite{Alejo2018}, our model is in some sense semilinear. 
		
		\medskip
		
		As a final comment on this part of our results, we should mention the work by Yan \cite{WYan2019} dealing with the blow-up description in the Born-Infeld theory. We strongly believe that the blow-up mechanism in the PCF model is triggered by the threshold $\Lambda =0$.

	\subsection{Application to soliton solutions} An important outcome of our previous results is a clear background for the study of soliton solutions of  \eqref{sistema4}. Belinski and Zakharov in \cite{belinskii1978integration} proposed that the Eq. \eqref{eq:PCE2} has $N-$soliton solutions, see also \cite{belinski2001gravitational} for further details. Hadad \cite{yaronhadad_2013} also showed explicit examples of soliton solutions for the equation (\ref{sistema4}) using diagonal backgrounds, also called ``seed metric''. Basically, one starts with a background solution  of the form
	\begin{equation}\label{g0}
		g^{(0)}=\left[\begin{array}{cc}
			e^{\Lambda^{(0)}} & 0\\
			0 & e^{-\Lambda^{(0)}}
		\end{array} \right].
	\end{equation}
	The function $\Lambda^{(0)}(t,x)$ satisfies the wave equation $\partial_t^2\Lambda^{(0)}-\partial_x^2\Lambda^{(0)}=0.$
	In this case, if we want identify the solution in terms of the equation (\refeq{diag1}), we have that $\Lambda=\Lambda^{(0)}$, $\phi=n\pi,$ with $n\in \mathbb{Z}$, and $\alpha=1$. The gauge choice for us will be $n=0$.
	
	\medskip
	
	As expressed in \cite{yaronhadad_2013}, an important case is the one-soliton solution, which is obtained by taking $\Lambda^{(0)}$ time-like and equals to $t$ and $\phi^{(0)}=0$. Note that with this choice the energy is not well-defined, but a suitable modification will make this metric regular again. Indeed,  the energy proposed in (\refeq{Energy}) is not finite, but one can consider the following modified energy 
	\begin{equation}\label{Energia_simpleM}
		E_\text{mod}[\Lambda,\phi](t):=\int \left(\dfrac{1}{2}\left( (\partial_t\Lambda)^2-1+ (\partial_x \Lambda)^2 \right) +2\sinh^2(\Lambda)((\partial_t \phi)^2+(\partial_x \phi)^2)  \right),
	\end{equation}
	which is also conserved and identically zero. Hadad computed the corresponding 1-soliton solution using Belinski and Zakharov techniques, obtaining 
	\begin{equation}\label{soliton}
		g^{(1)}=	\left[\begin{array}{cc}
			\dfrac{e^{t}Q_c(x-vt)}{Q_c(x-vt-x_0)} & -\dfrac{1}{c}Q_c(x-vt)\\
			-\dfrac{1}{c}Q_c(x-vt) & \dfrac{e^{-t}Q_c(x-vt)}{Q_c(x-vt+x_0)}
		\end{array} \right],
	\end{equation}
	where, for a fixed parameter $\mu > 1$, one has 
	\[
	Q_c(\cdot)=\sqrt{c}\sech(\sqrt{c} (\cdot)), \quad c=\left(\dfrac{2\mu}{\mu^2-1}\right)^2, \quad v=-\dfrac{\mu^2+1}{2\mu}<-1, \quad \hbox{and}  \quad x_0=\dfrac{\ln |\mu|}{\sqrt{c}}.
	\]
	Notice that the first component of $g^{(1)}$ grows in time. The parameter $\mu$ represents a pole in terms of scattering techniques, however this point of view will be considered in another work. Therefore, we have a traveling superluminal soliton which travels to the left  (if $\mu > 0$). Also, representing $g^{(1)}$ in terms of corresponding functions $\Lambda^{(1)},\phi^{(1)}$ is complicated, and done in Section \ref{Sect:5}.
	 
\medskip

In this paper, we propose a modification of this ``degenerate'' soliton solution by cutting off the infinite energy part profiting of the wave-like character of solutions $\Lambda^{(0)}$. Although it is not so clear that they are physically meaningful, these new solutions have finite energy and local well-posedness properties in a vicinity.

\medskip

 Indeed, consider a smooth function $\theta \in  C^{2}_c(\mathbb R)$. Additionally, consider the constraint $0<\mu<1$. For any $\lambda>0$, and $\varepsilon >0$ small, let
\[
\Lambda^{(0)}_{\varepsilon} := \lambda + \varepsilon \theta(t+x), \quad \phi^{(0)}: =0.
\]		
Clearly $\Lambda^{(0)}_{\varepsilon}$ solves the wave equation in $1D$ and has finite energy $E[\Lambda^{(0)}_{\varepsilon},\phi^{(0)}_{\varepsilon}]<+\infty$. This will be for us the background seed. The corresponding 1-soliton is now
\begin{equation}\label{solitonG}
	g^{(1)}=	\left[\begin{array}{cc}
		\dfrac{e^{\lambda +\varepsilon\theta}\sech(\beta(\lambda + \varepsilon \theta))}{\sech(\beta(\lambda + \varepsilon \theta)-x_0)} & -\dfrac{1}{\sqrt{c}}\sech(\beta(\lambda + \varepsilon \theta))\\
		-\dfrac{1}{\sqrt{c}}\sech(\beta(\lambda + \varepsilon \theta))& \dfrac{e^{-(\lambda + \varepsilon \theta)}\sech(\beta(\lambda + \varepsilon \theta))}{\sech(\beta(\lambda + \varepsilon \theta)+x_0)}
	\end{array} \right], \qquad \beta=\frac{\mu +1}{\mu -1},
\end{equation} 
which also has finite energy. Perturbations of the fields $\Lambda$  and $\phi$ associated with  this soliton  will be globally defined according to the Theorem \ref{GLOBAL0}:

\begin{cor}\label{aplication}
%Suitable perturbations of the small soliton (\refeq{solitonG})  are globally well-defined. 
Suitable perturbations of any soliton as in \eqref{solitonG} are globally well-defined. 
\end{cor}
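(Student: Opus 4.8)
The plan is to realize the corollary as a direct application of Theorem \ref{GLOBAL0}, once the soliton \eqref{solitonG} is recast in the right perturbative variables. The first step is to read off, via the dictionary \eqref{diag1}, the fields $(\Lambda^{(1)},\phi^{(1)})$ attached to $g^{(1)}$ (together with the normalization keeping $\alpha\equiv1$); this is the content of Section \ref{Sect:5}, but the only structural features I need are the following. Since every occurrence of $(t,x)$ in \eqref{solitonG} enters solely through $\theta(t+x)$, on each time slice $\{t=\text{const}\}$ the matrix $g^{(1)}$ coincides with the constant matrix $g^{(1)}|_{\varepsilon=0}$ outside the compact set $\supp\theta-t$. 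Consequently $\Lambda^{(1)}(t,\cdot)\equiv\Lambda^\ast$ and $\phi^{(1)}(t,\cdot)\equiv\phi^\ast$ there, where $(\Lambda^\ast,\phi^\ast)$ are the constants determined by the eigenvalues and off-diagonal phase of $g^{(1)}|_{\varepsilon=0}$. In particular $\tilde\Lambda^{(1)}:=\Lambda^{(1)}-\Lambda^\ast$ and $\phi^{(1)}-\phi^\ast$ are smooth and compactly supported in $x$ on every time slice, provided one takes $\theta\in C^\infty_c$.

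Next I record smallness and nondegeneracy. Because $g^{(1)}|_{\varepsilon=0}$ is constant, the map $\varepsilon\mapsto(\Lambda^{(1)},\phi^{(1)})$ is smooth and reduces to $(\Lambda^\ast,\phi^\ast)$ at $\varepsilon=0$; a Taylor expansion in $\varepsilon$ then gives
\[
\tilde\Lambda^{(1)}=\varepsilon\,\partial_\varepsilon\Lambda^{(1)}\big|_{\varepsilon=0}+O(\varepsilon^2),\qquad \phi^{(1)}-\phi^\ast=\varepsilon\,\partial_\varepsilon\phi^{(1)}\big|_{\varepsilon=0}+O(\varepsilon^2),
\]
where, by the compact support just established, each term vanishes outside $\supp\theta(t+\cdot)$. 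Hence $\|(\tilde\Lambda^{(1)},\phi^{(1)}-\phi^\ast)\|_{\mathcal H}=O(\varepsilon)$ at $t=0$. Two bookkeeping reductions put the data in the exact shape of Theorem \ref{GLOBAL0}: first, since the system \eqref{sistema4} involves $\phi$ only through its derivatives, the translation $\phi\mapsto\phi-\phi^\ast$ is an exact symmetry, so I may assume the asymptotic phase is $0$; second, I verify in Section \ref{Sect:5} that for the admissible parameters ($0<\mu<1$, $\lambda>0$) one has $\Lambda^\ast\neq0$, and I apply the theorem with its background constant set to $|\Lambda^\ast|>0$ rather than to the seed value $\lambda$.

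With these reductions in hand, a \emph{suitable perturbation} of the soliton is the Cauchy datum obtained by adding to $(\Lambda^{(1)},\phi^{(1)},\partial_t\Lambda^{(1)},\partial_t\phi^{(1)})|_{t=0}$ an arbitrary $C^\infty_c(\mathbb R)$ pair of $\mathcal H$-size $\lesssim\varepsilon$. By the previous paragraph this combined datum is a smooth, compactly supported, $O(\varepsilon)$ perturbation of the constant state $(\Lambda^\ast,0)$, i.e. exactly the class covered by Theorem \ref{GLOBAL0} with $\lambda=|\Lambda^\ast|$. Choosing $\varepsilon<\varepsilon_0$ as in that theorem, the corresponding solution of \eqref{IVP} exists for all $t$, stays smooth, and has finite conserved energy \eqref{Energy}; undoing the translation $\phi\mapsto\phi+\phi^\ast$ returns a global solution in the original gauge, which is the assertion of the corollary.

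The main obstacle is entirely concentrated at the nondegeneracy threshold $\Lambda=0$, in line with the heuristic that the blow-up mechanism is triggered there. The quasilinear coefficient $\sinh(2\Lambda)/\sinh^2\Lambda$ in \eqref{sistema4} is singular precisely at that value, so the whole argument hinges on keeping $\Lambda^{(1)}$ uniformly away from zero along the flow; this is what forces me both to check $\Lambda^\ast\neq0$ for the background and to take $\varepsilon$ small, so that $\|\tilde\Lambda^{(1)}\|_{\infty}=O(\varepsilon)\ll|\Lambda^\ast|$ cannot let $\Lambda^{(1)}$ cross the threshold. Once this separation is secured, the null structure displayed in \eqref{problema3} supplies the decay that powers the global bootstrap, and Theorem \ref{GLOBAL0} applies. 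The remaining work, namely justifying the smoothness and $O(\varepsilon)$ size of $\varepsilon\mapsto(\tilde\Lambda^{(1)},\phi^{(1)}-\phi^\ast)$ directly from \eqref{solitonG}, is a routine but lengthy computation deferred to Section \ref{Sect:5}.
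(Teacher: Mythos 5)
Your proposal is correct and follows essentially the same route as the paper: both pass through the dictionary \eqref{diag1} to extract the fields associated with \eqref{solitonG}, decompose them as a nonzero constant plus a smooth, compactly supported, $O(\varepsilon)$ perturbation (shifting $\phi$ by its constant asymptotic value, which is harmless since \eqref{sistema4} sees $\phi$ only through derivatives), verify the nondegeneracy $\Lambda$ stays away from zero using $0<\mu<1$ (hence $|v|>1$, $\beta<0$), and then invoke Theorem \ref{GLOBAL0} with the shifted background constant. The only difference is presentational: the paper writes out the explicit formulas for $\hat{\Lambda}_\varepsilon$, $\hat{\phi}_\varepsilon$ and their time derivatives, while you obtain the same structural facts softly from the compact support of $\theta$ and smoothness in $\varepsilon$.
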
	

This result has an important outcome: it allows us to try to study the stability of these solutions, which will be done in a forthcoming work. Additionally, there are other possible choices of metrics in Einstein's field equations that lead to the KdV model, see e.g. \cite{sarma2010kdv}.

	\subsection*{Organization of this paper} This paper is organized as follows: Section \ref{Sect:2} is devoted to the proof of local existence of solutions. In Section \ref{Sect:3} we prove global existence of small solutions close to a nonzero value. Section \ref{Sect:4} is devoted to the long time behavior of solutions, and finally Section \ref{Sect:5} consider the particular case of solitons.
	
	\subsection*{Acknowledgments} Deep thanks to Professor Claudio Muñoz for his constant support, inspiring conversations, kind comments, as well as his advice and valuable suggestions, that have made it possible to obtain a quality version of this work.
	 
	 \section{The initial value problem: local existence}\label{Sect:2}
	
This section is devoted to the proof of Proposition \ref{LOCAL}. First, recall the following result \cite{sogge}, that we will use to prove Proposition \ref{LOCAL}. 
\begin{lem}
Let $\psi: I \times \mathbb{R} \longrightarrow \mathbb{R}$, $I\subseteq \mathbb{R},$ be the solution of the  initial value problem
	\begin{equation}
		\begin{cases*}
			\partial_t^2\psi - \Delta\psi= f(t,x), \quad (t,x) \in I\times \mathbb{R},\\
			(\psi, \partial_t \psi)|_{\{t=0\}}=(\psi_0, \psi_1) \in H^{k}(\mathbb{R})\times H^{k-1}(\mathbb{R}),
		\end{cases*}
	\end{equation}
	where  $k$ be a positive integer. Then for some positive constant $C=C(k),$ the following energy estimate holds   
	\begin{equation}\label{Eenergia}
	\begin{aligned}
		& \sup_{t\in [0,T]} \norm{ (\psi, \partial_t \psi)}_{H^{k}(\mathbb{R})\times H^{k-1}(\mathbb{R})} \leq  C(1+T)\left(  \norm{ (\psi_0,\psi_1) }_{H^{k}(\mathbb{R})\times H^{k-1}(\mathbb{R})}+ \int_{0}^{T} \norm{f}_{H^{k-1}(\mathbb{R})}(t) dt \right).
		\end{aligned}
	\end{equation} 
\end{lem}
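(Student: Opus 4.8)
The plan is to run the standard higher-order energy method for the constant-coefficient d'Alembertian and then upgrade the homogeneous estimate it produces to the full $H^k$ control of $\psi$, which is precisely where the factor $(1+T)$ enters. First I would establish the basic $L^2$ energy identity: for any solution $u$ of $\partial_t^2 u-\Delta u=g$, multiplying by $\partial_t u$, integrating in $x$, and integrating by parts via $\int \Delta u\,\partial_t u\,dx=-\tfrac12\frac{d}{dt}\int|\nabla u|^2\,dx$ gives
\begin{equation*}
\frac{d}{dt}\Big[\tfrac12\int\big(|\partial_t u|^2+|\nabla u|^2\big)\,dx\Big]=\int g\,\partial_t u\,dx.
\end{equation*}
Writing $E_u(t)$ for the bracketed energy and bounding the right-hand side by Cauchy--Schwarz as $\int g\,\partial_t u\,dx\le \norm{g}_{L^2}\sqrt{2E_u(t)}$, I obtain the differential inequality $\frac{d}{dt}\sqrt{E_u(t)}\le \tfrac{1}{\sqrt2}\norm{g}_{L^2}$ (the possible vanishing of $E_u$ is handled by replacing $E_u$ with $E_u+\delta$ and letting $\delta\to0$). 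Integrating in time then yields
\begin{equation*}
\sqrt{E_u(t)}\le \sqrt{E_u(0)}+\tfrac{1}{\sqrt2}\int_0^t\norm{g}_{L^2}(s)\,ds.
\end{equation*}

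Next I would promote this to order $k$. Because $\partial_t^2-\Delta$ has constant coefficients, for each multi-index $\beta$ with $|\beta|\le k-1$ the derivative $\partial^\beta\psi$ solves the same wave equation with source $\partial^\beta f$; applying the previous step to $u=\partial^\beta\psi$ and summing over $|\beta|\le k-1$ controls $\norm{\partial_t\psi}_{H^{k-1}}$ and $\norm{\nabla\psi}_{H^{k-1}}$, i.e. the quantity $\norm{\partial_t\psi}_{H^{k-1}}+\norm{\psi}_{\dot H^k}$, in terms of the data norm $\norm{(\psi_0,\psi_1)}_{H^k\times H^{k-1}}$ and of $\int_0^T\norm{f}_{H^{k-1}}\,dt$. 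Here the initial energies $E_{\partial^\beta\psi}(0)$ are estimated directly by $\norm{\psi_0}_{H^k}$ and $\norm{\psi_1}_{H^{k-1}}$.

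The only genuinely inhomogeneous point --- and the origin of the $(1+T)$ prefactor --- is recovering the undifferentiated piece $\norm{\psi}_{L^2}$, to which the energy is blind. I would close this gap by the fundamental theorem of calculus in time, $\psi(t)=\psi_0+\int_0^t\partial_t\psi(s)\,ds$, which gives $\norm{\psi(t)}_{L^2}\le\norm{\psi_0}_{L^2}+T\sup_{[0,T]}\norm{\partial_t\psi}_{L^2}$; combined with the order-$k$ bound and after absorbing universal constants into $C=C(k)$ this produces the full $\norm{\psi}_{H^k}$ control with the stated linear-in-$T$ factor. The integration-by-parts and differentiation-under-the-integral manipulations would first be carried out for smooth, rapidly decaying solutions and then extended to the stated regularity class by a standard approximation argument. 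I expect the main (though mild) obstacle to be exactly this last upgrade from the $\dot H^k$ estimate delivered by the energy to the inhomogeneous $H^k$ norm, since it is the only step that introduces growth in $T$ and where one must take care not to lose an extra power of $T$.
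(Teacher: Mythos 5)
Your proof is correct and coincides with the standard argument this lemma rests on: the paper itself does not prove the statement but quotes it from Sogge's lectures on nonlinear wave equations, where it is obtained exactly as you do, via the $L^2$ energy identity, commutation of spatial derivatives through the constant-coefficient d'Alembertian to reach order $k$, and the fundamental-theorem-of-calculus step $\psi(t)=\psi_0+\int_0^t\partial_t\psi\,ds$ that recovers the zeroth-order term and produces the $(1+T)$ factor. No gaps; your identification of that last step as the only source of growth in $T$ is precisely right.
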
	 
	 \begin{proof}[Proof of Proposition \ref{LOCAL}] The proof is standard in the literature, but for the sake of completeness, we include it here.
	 
	 \medskip
	 
	 	(1). This part of the Proposition is proved by Picard's iteration. Using a density argument it is sufficient to assume the initial data $(\Psi_0,\Psi_1)\in \mathcal{S}^4$ ($\mathcal S$ being the Schwartz class), along with condition (\refeq{condicion1}). Define a sequence of smooth functions $\Psi^{(i)},$ with $i \geq 1$ such that 
		\[
		\Psi^{(1)}=(0,0),
	 	\]
	 	and for $i \geq 2,$ $\Psi^{(i)}$ is iteratively defined as the unique solution to the system
	 	\begin{equation}
	 		\begin{cases*}
	 			\partial_{\alpha} (m^{\alpha \beta}\partial_{\beta}\Psi^{(i)})=F(\Psi^{(i-1)},\partial \Psi^{(i-1)})\\
	 			(\Psi^{(i)},\partial_t \Psi^{(i)})|_{\{t=0\}}=(\Psi_0, \Psi_1) \in  \mathcal{H}.
	 		\end{cases*}\label{IVP1}
	 	\end{equation}
		 It is important to note that from \eqref{notation} and \eqref{condicion1} we can assure that for $j=1,2,$
	\begin{equation}\label{condicion2}
		\sum_{ \gamma =0, 1} \sup_{|x|,|p|\leq \frac{\lambda}{2}}|\partial^{\gamma}_{x,p} F_j|(x,p) \leq C_{j,\frac12\lambda}.
	\end{equation} 
	Indeed, this can be seen from the fact that for $(x,p)=(x_1,x_2,p_1,p_2,p_3,p_4)$ and $|x|\leq \frac{\lambda}2,$
	\[
	F_1(x,p)= 2\sinh(2\lambda +2x_1)\left(p_4^2-p_3^2\right),\quad F_2(x,p)= \dfrac{\sinh(2(\lambda +x_1))}{\sinh^2(\lambda +x_1)} \left(p_3 p_1 - p_2p_4 \right).
	\]
	Define bounded functions in the class $C^1$.
	
	\medskip
	
	 	It is important to note that condition (\refeq{condicion2}) allows this iterative definition of the functions $\Psi^{(i)}$ to be possible, since it maintains each component of $F$ with the required regularity, see \cite{sogge}. First, it will be shown that for a sufficiently small $T>0,$ the sequence $(\Psi,\partial_t \Psi)$ is uniformly (in $i$) bounded in $L^{\infty}([0,T]; \mathcal{H})$, then it will be shown that it is also a Cauchy sequence. For the first part, the idea is to use the energy estimates (\refeq{Eenergia}), we want to prove that there is a constant $ 0 < A \leq \frac{\lambda}{2}$ such that 
	 	\begin{equation}\label{HI}
	 		\norm{ \left( \Psi^{(i-1)},\partial_t \Psi^{(i-1)} \right) }_{L^{\infty}([0,T];\mathcal{H})} \leq A,	
	 	\end{equation}
	 	implies that 
	 	\begin{equation*}
	 		\norm{ \left(\Psi^{(i)},\partial_t \Psi^{(i)} \right) }_{L^{\infty}([0,T];\mathcal{H})} \leq A.	
	 	\end{equation*} 
	 	The energy estimation (\refeq{Eenergia}) allows us to write for (\refeq{IVP1}) the following estimate:  
	 	\begin{equation}
	 		\begin{aligned}
	 			 \sup_{t\in [0,T]} \norm{\left(\Psi^{(i)},\partial_t \Psi^{(i)}\right)}_{\mathcal{H}} 
	 			 	&~{}\leq  C(1+T)(\norm{\left(\Psi_0, \Psi_1\right)}_{\mathcal{H}})  \\
	 			 +C(1+T)	&~{} \int_{0}^{T} \left(\norm{ F_1\left(\Psi^{(i-1)},\partial \Psi^{(i-1)}\right)}_{L^2(\mathbb{R})}+\norm{ F_2\left( \Psi^{(i-1)},\partial \Psi^{(i-1)} \right)}_{L^2(\mathbb{R})}\right)(t)dt.
	 		\end{aligned} 
	 	\end{equation}
	 	With this estimate, our goal is to bound the integral on the right hand side of the inequality above. For this, we will use the conditions (\refeq{condicion1}) for each $F_j$ which is satisfied by the hypothesis in  (\refeq{HI}), which results in the following, if $B=\max \{C_{1,\frac{\lambda}{2}}, C_{2,\frac{\lambda}{2}}\},$ then 
	 	\begin{equation}
	 		\begin{aligned}
	 			\sup_{t\in [0,T]} \norm{ \left(\Psi^{(i)},\partial_t \Psi^{(i)} \right)}_{\mathcal{H}} \leq  C(1+T)\left( \norm{ \left(\Psi_0, \Psi_1 \right)}_{\mathcal{H}}+2BT \right),
	 		\end{aligned} 
	 	\end{equation}
	 	we can choose $T> 0$  sufficiently small such that 
	 	\begin{equation*}
	 		2BT \leq \norm{\left(\Psi_0,\Psi_1\right)}_{\mathcal{H}},
	 	\end{equation*} 
	 	so
	 	\begin{equation*}
	 	\norm{ \left(\Psi^{(i)},\partial_t \Psi^{(i)}\right)}_{L^{\infty}([0,T]; \mathcal{H})} \leq 2C \norm{ (\Psi_0,\Psi_1) }_{\mathcal{H}}.
	 	\end{equation*}
	 	If we choose $D > 4C$ in (\refeq{condicion1})  and  $A:= 2C||(\Psi_0,\Psi_1)||_{\mathcal{H}}\leq \frac{2C\lambda}{D} \leq \frac{\lambda}{2}.$ We have thus shown the desired implication.
		\\

	 	We now move to the second part in which we show that the sequence is Cauchy in a larger space $L^{\infty}([0,T]; \mathcal{H})$. For every $i \geq 3$ we consider the equation for $\Psi^{(i)}-\Psi^{(i-1)}:$ 
	 	\begin{equation*}
	 		\partial_{\alpha}(m^{\alpha \beta}\partial_{\beta}(\Psi^{(i)}-\Psi^{(i-1)}))= F(\Psi^{(i-1)},\partial \Psi^{(i-1)})-F(\Psi^{(i-2)},\partial \Psi^{(i-2)}).
	 	\end{equation*}
	 	Using the condition (\refeq{HI}) and the mean value theorem to show that there exists some $C>0$ (depending on initial data but
	 	independent of $i$ and $T$) such that
	 	\begin{equation*}
	 		\begin{aligned}
	 			&~{}	 \norm{ F_j\left( \Psi^{(i-1)},\partial \Psi^{(i-1)} \right)-F_j \left(\Psi^{(i-2)},\partial \Psi^{(i-2)} \right)}_{L^2(\mathbb{R})} \leq C \norm{ \partial(\Psi^{(i-1)}- \Psi^{(i-2)}) }_{L^2(\mathbb{R})\times L^2(\mathbb{R})}.
	 		\end{aligned}
	 	\end{equation*}
	 	Now, applying again the energy estimation (\refeq{Eenergia})  
	 	\begin{align*}
	 		&	\sup_{t\in[0,T]} \norm{ (\Psi^{(i)}-\Psi^{(i-1)},\partial_t(\Psi^{(i)}-\Psi^{(i-1)})) }_{\mathcal{H}} \leq  CT \norm{ \left( \Psi^{(i-1)}-\Psi^{(i-2)},\partial_t(\Psi^{(i-1)}-\Psi^{(i-2)}) \right) }_{\mathcal{H}} .
	 	\end{align*}
	 	Using $(\refeq{HI})$ we have 
	 	\begin{equation*}
	 		\sup_{t\in [0,T]} \norm{ \left( \Psi^{(2)}-\Psi^{(1)},\partial_t(\Psi^{(2)}-\Psi^{(1)}) \right)}_{\mathcal{H}} \leq C_1,
	 	\end{equation*}  
	 	therefore, choosing $T$ sufficiently small, for $i \geq 3$ we have 
	 	\begin{equation*}
	 		\sup_{t \in [0,T]} \norm{ \Psi^{(i)}-\Psi^{(i-1)} }_{H^{1}\times H^1} \leq \dfrac{1}{2} \sup_{t \in [0,T]} \norm{ \Psi^{(i-1)}-\Psi^{(i-2)} }_{H^{1}\times H^1},
	 	\end{equation*}
	 	which implies that 
	 	\begin{equation*}
	 		\sup_{t \in [0,T]} \norm{ \Psi^{(i)}-\Psi^{(i-1)} }_{H^{1}\times H^1} \leq \dfrac{C_1}{2^{i-2}}.
	 	\end{equation*}
	 	Therefore we have that the sequence is a Cauchy sequence on $L^{\infty}([0,T]; \mathcal H),$ hence convergent.  That is, there exists $(\Psi, \partial_t \Psi)$ in $L^{\infty}([0,T]; \mathcal H).$ The uniqueness proof, is the result of considering again the energy estimation. 
	 	
	 	Finally, for the continuous dependence on initial data taking $i\in \mathbb{N}$ sufficiently large, let us bound the difference $\Psi^{(i)}-\Psi$ and use again the energy estimate for the equation:
	 		\begin{equation*}
	 		\partial_{\alpha} ({\color{black} m}^{\alpha \beta}\partial_{\beta} (\Psi^{(i)}-\Psi))= F(\Psi^{(i)},\partial \Psi^{(i)})-F(\Psi, \partial \Psi)
	 	\end{equation*}
	 	Applying the same reasoning as above and the energy estimation we can again write:
	 	\begin{align*}
	 		\sup_{s\in [0,t]} \norm{ \left(\Psi^{(i)}-\Psi, \partial_t \Psi^{(i)}-\partial_t \Psi \right) }_{\mathcal{H}} \leq&~{} C \norm{ \left(\Psi_0^{(i)}-\Psi_0,  \Psi_1^{(i)}- \Psi_1 \right)}_{{\color{blue}\mathcal{H}}} \\ 
	 		& +C \int_{0}^{t} \norm{ \left(\Psi^{(i)}-\Psi, \partial_t \Psi^{(i)}-\partial_t \Psi \right)}_{{\color{blue}\mathcal{H}}}.
	 	\end{align*}
	 	{\color{blue} Using Gronwall's inequality (see Appendix \ref{A}) and \eqref{Eenergia}, we have {\color{blue} for a constant $C=C(T) > 0$}},
	 	\begin{equation*}
	 		\sup_{t\in [0,T]} \norm{ \left(\Psi^{(i)}-\Psi, \partial_t \Psi^{(i)}-\partial_t \Psi \right) }_{{\color{blue}\mathcal{H}}} \leq C \norm{ \left(\Psi_0^{(i)}-\Psi_0,  \Psi_1^{(i)}-\Psi_1 \right) }_{{\color{blue}\mathcal{H}}} .
	 	\end{equation*} 
	 Taking $i \longrightarrow \infty$ the right-hand side of the inequality tends to zero, then
	 	\begin{equation*}
	 		\sup_{s\in [0,t]} \norm{ \left(\Psi^{(i)}-\Psi, \partial_t \Psi^{(i)}-\partial_t \Psi \right)}_{{\color{blue}\mathcal{H}}} \longrightarrow 0. 
	 	\end{equation*}	
		This last property ends the proof of Proposition \ref{LOCAL}.
	 \end{proof}
%%%%%%%%%%%%%%%%%%%%%%%%%%%%%%%%%%%%%%%%%%%%%%	 
	\section{Global Solutions for Small Initial Data}\label{Sect:3} 
In this Section we prove Theorem \ref{GLOBAL0}. As in the previous section, let us consider the field $\Lambda(t,x)$ described as $\Lambda(t,x):= \lambda + \tilde{\Lambda}(t,x)$, then, let us establish the conditions on $\lambda$ that guarantee the regularity conditions necessary to study the system (\refeq{cauchy2})
	\begin{equation}
		\begin{cases*}
			\partial^2_{t}\tilde{\Lambda}- \partial_x^2 \tilde{\Lambda}=-2\sinh(2\lambda +2\tilde{\Lambda})\left((\partial_x \phi)^2-(\partial_t\phi)^2 \right) =-2\sinh(2\lambda +2\tilde{\Lambda})Q_0(\phi,\phi),\\
			\partial_t^2 \phi-\partial_x^2 \phi =\dfrac{\sinh(2\lambda +2\tilde{\Lambda})}{\sinh^2(\lambda +\tilde{\Lambda})} \left( \partial_x \phi \partial_x \tilde{\Lambda}-\partial_t \phi \partial_t \tilde{\Lambda} \right)=\dfrac{\sinh(2\lambda +2\tilde{\Lambda})}{\sinh^2(\lambda +\tilde{\Lambda})}Q_0(\phi,\tilde{\Lambda}),\label{problema2}
		\end{cases*}
	\end{equation}
with $Q_0$ given in \eqref{NC}. The constant $\lambda>0$ will play an important role in the overall analysis of the problem and  the conditions assumed on it  will be verified using a continuity method. We will use two coordinate systems: the standard Cartesian coordinates $(t,x)$ and the null coordinates $(u, \underline{u})$: %. The coordinate functions in the null coordinates are the standard optical functions defined as follows:
{\color{blue}
\begin{equation*}
	u:=\dfrac{t-x}{2}, \quad \underline{u}:= \dfrac{t+x}{2}.
\end{equation*}
}
\begin{rem} 
	Consider the two null vector fields defined globally as $$L=\partial_t + \partial_x,\quad \underline{L}= \partial_t - \partial_x.$$  Then, one can rewrite the right-hand side of (\refeq{problema2}) as  
	\begin{align}\label{NC2}
		&	(\partial_x \phi)^2-(\partial_t\phi)^2=Q_0(\phi,\phi)=2L\phi \underline{L}\phi,\\
		& 	\partial_x \phi \partial_x  \tilde{\Lambda}-\partial_t \phi \partial_t \tilde{\Lambda}=Q_0(\phi,\tilde{\Lambda})=\dfrac{1}{2}L\phi \underline{L}\tilde{\Lambda}+\dfrac{1}{2}L\tilde{\Lambda} \underline{L}\phi. \label{NC22}
	\end{align}
	It can be also noticed that the null structure commutes with derivatives: 
	\begin{equation}\label{derivadanullC}
		\partial_xQ_0(\phi,\tilde{\Lambda})=Q_0(\partial_x \phi,\tilde{\Lambda})+Q_0(\phi,\partial_x \tilde{\Lambda}).
	\end{equation}
Also, based on this, we have the following inequality
\begin{equation}\label{importante}
	Q_0(\partial_{x}^p \phi, \partial_{x}^q \phi) \lesssim \abs{L\partial_x^p \phi}\abs{\underline{L}\partial_x^q \phi}+  \abs{\underline{L}\partial_x^p \phi}\abs{L\partial_x^q \phi}.
\end{equation}
\end{rem}
Before presenting the proof, there are certain results and definitions to be mentioned before, for details and proofs see \cite{alinhac2009hyperbolic, Luli2018}. From now on, we will consider  the conformal killing vector field on $\mathbb{R}^{1+1}$ given by
\begin{equation*}
	(1+\abs{{\color{blue}\underline{u}}}^2)^{1+\delta}L, \quad (1+\abs{{\color{blue}u}}^2)^{1+\delta}\underline{L},
\end{equation*}
 with $0 < \delta < 1$, and the following integration regions: ${\color{blue}\Sigma}_{t_0}$  denotes the following time slice in $\mathbb{R}^{1+1}$: 
\begin{align}\label{S_t}
	{\color{blue}\Sigma}_{t_0} := \{(t,x):\; t=t_0 \}.
\end{align}
$D_{t_0}$ denotes the following region of spacetime 
\begin{equation}\label{D_t}
	D_{t_o}:= \{ (t,x):\;  0 \leq t \leq t_0 \}, \quad D_{t_0}=\bigcup_{0\leq t \leq t_0} {\color{blue}\Sigma}_{t_0}.
\end{equation}
The level sets of the functions $u$ and $\underline{u}$ define two global null foliations of $D_{t_0}$. More precisely, given $t_0>0$, $u_0$ and $\underline{u}_0$, we define the rightward null curve segment {\color{blue}$C_{u_0}$} as :
{\color{blue}\begin{equation*}
	C_{u_0} := \left\{(t,x): \; u=\frac{t-x}{2} =u_0,\, 0\leq t \leq t_0\right\},
\end{equation*}}
and the segment of the null curve to the left {\color{blue}$\underline{C}_{\underline{u}_0}$}as:
{\color{blue}\begin{equation*}
	\underline{C}_{\underline{u}_0} := \left\{(t,x): \; \underline{u}=\frac{t+x}{2} =\underline{u}_0,\, 0\leq t \leq t_0\right\}.
\end{equation*}}

\begin{figure}
\centering
\begin{tikzpicture}[scale=1.0]
\def\aa{7} 	
\begin{axis}[
axis x line = middle,
axis y line = left,
every outer y axis line/.style={draw=none}, 
axis line style = {-},
% no y axis
%y axis line style = {draw=none},
%xlabel = $$,
%ylabel = $$,
xmin = -11,
xmax = 18,
clip=false, % es para que permita salirse de los valores máximo y mínimo dados de ser necesari
xtick = {-11},
xticklabels ={},
ymin = 0,
ymax = 7,
ytick={0,7},
yticklabels={$\Sigma_0$,$\Sigma_t$},
height = 10em,
width = 20em,
]

\addplot+[
black,%,very thick,
mark=none,
%const plot,
%empty line=jump,
]
coordinates {
(-7,7)
(0,0)
%(7,0)
%(13,7)
}
node[rotate = 0,pos=0.45,left]{$\underline{C}_{\underline u}$};
% horizontal part
\addplot+[
black,%,very thick,
mark=none,
%const plot,
%empty line=jump,
]
coordinates {
%(-7,7)
%(0,0)
(7,0)
(13,7)
}
node[rotate = 0,pos=.55,right]{$C_u$};

\addplot+[
black,%,very thick,
mark=none,
%const plot,
%empty line=jump,
]
coordinates {
(-11,7)
(18,7)
};
\end{axis}
\end{tikzpicture}
\caption{{\color{blue} The entire region enclosed by $\Sigma_0$ and $\Sigma_t$ is $D_t.$}} \label{fig R}
\end{figure}
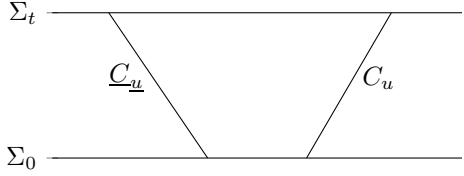

The space time region $D_{t_0}$ is foliated by {\color{blue}$\underline{C}_{\underline{u}_0}$} for $\underline{u}\in \mathbb{R}$, and by {\color{blue}$C_{u_0}$} for $u\in \mathbb{R}$. 
Let us also consider the following  energy estimate  proposed in \cite{alinhac2009hyperbolic, Luli2018} for the scalar linear equations $\square \psi = \rho$ given by:
\begin{equation}
	\begin{aligned}\label{EEnergia}
		& ~{}\int_{{\color{blue}\Sigma_t}} \left[(1+\abs{{\color{blue}u} }^2)^{1+\delta}\abs{\underline{L}\psi}^2+ (1+\abs{{\color{blue}\underline{u}}}^2)^{1+\delta}\abs{L\psi}^2\right]dx \\
		& \qquad +\sup_{{\color{blue}\underline{u}}\in \mathbb{R}}\int_{\underline C_{{\color{blue}\underline{u}}}} (1+\abs{{\color{blue} u}}^2)^{1+\delta}\abs{\underline{L}\psi}^2d\tau + \sup_{{\color{blue} u}\in \mathbb{R}}\int_{C_{{\color{blue} u}}} (1+\abs{{\color{blue}\underline u}}^2)^{1+\delta}\abs{L\psi}^2 d\tau \\
		&~{} \qquad \leq C_0\int_{{\color{blue}\Sigma}_0}\left[ (1+\abs{{\color{blue} u}}^2)^{1+\delta}\abs{\underline{L}\psi}^2+(1+\abs{{\color{blue} \underline u}}^2)^{1+\delta}\abs{L\psi}^2 \right]dx\\
		& ~{} \qquad \quad + C_0 \iint_{D_t} \left[(1+\abs{{\color{blue}u}}^2)^{1+\delta}\abs{\underline{L}\psi} + (1+\abs{{\color{blue}\underline u}}^2)^{1+\delta}\abs{L\psi}\right]\abs{\rho}.
	\end{aligned}
\end{equation}
Motivated by the above estimation (\refeq{EEnergia}) and \cite{Luli2018}, we define the space-time weighted energy norms valid for $k=0,1$:
\begin{equation}\label{energies}
\begin{aligned}
	&	\mathcal{E}_k(t)=\int_{{\color{blue}\Sigma}_t} \left[(1+\left|{\color{blue} u} \right|^2)^{1+\delta}\abs{\underline{L}\partial_x^k\tilde{\Lambda}}^2+  (1+\left|{\color{blue} \underline u} \right|^2)^{1+\delta}\abs{L\partial_x^k \tilde{\Lambda}}^2\right]dx,\\
	&	\overline{\mathcal{E}}_k(t)=\int_{{\color{blue}\Sigma}_t} \left[(1+\left|{\color{blue} u} \right|^2)^{1+\delta}\abs{\underline{L}\partial_x^k\phi}^2+  (1+\left|{\color{blue} \underline u} \right|^2)^{1+\delta}\abs{L\partial_x^k \phi}^2\right]dx,\\
	& \mathcal{F}_k(t)= \sup_{{\color{blue} \underline u}\in \mathbb{R}} \int_{{\color{blue} \underline{C}_{\underline u}}} (1+\left|{\color{blue} u}\right|^2)^{1+\delta} \left| \underline{L}\partial_x^k\tilde{\Lambda} \right|^2ds+ \sup_{{\color{blue} u}\in \mathbb{R}} \int_{{\color{blue} C_{u}}} (1+\left| {\color{blue}\underline u} \right|^2)^{1+\delta}\abs{L\partial_x^k \tilde{\Lambda}}^2ds,\\
	& \overline{\mathcal{F}}_k(t)= \sup_{{\color{blue}\underline u}\in \mathbb{R}} \int_{C_{{\color{blue} \underline u}}} (1+\left|{\color{blue} u} \right|^2)^{1+\delta} \left| \underline{L}\partial_x^k\phi \right|^2ds+ \sup_{{\color{blue} u}\in \mathbb{R}} \int_{C_{{\color{blue} u}}} (1+\left|{\color{blue}\underline u} \right|^2)^{1+\delta}\abs{L\partial_x^k \phi}^2ds.
\end{aligned}
\end{equation}
Finally, we define the total energy norms as follows:
\[
\mathcal{E}(t)= \mathcal{E}_0 (t)+\mathcal{E}_1(t).
\] 
Analogously one defines $\mathcal{F}(t)$, $\overline{\mathcal{E}}(t)$, and $\overline{\mathcal{F}}(t).$ 
\begin{rem}
	We note that if $t=0$ then $\mathcal{F}(0)=\overline{\mathcal{F}}(0)=0$ and for $\mathcal{E}(t)$ the initial data determines a constant $C_1$ so that
	\begin{equation}
		\mathcal{E}(0)=C_1 \varepsilon^2.
	\end{equation}
	\end{rem}
	We will use the method of continuity as follows: we assume that the solution $\tilde{\Lambda}$ exists for $t\in [0,T^{*}]$ so that it has the following bound
	\begin{align}\label{supuesto}
		&	\mathcal{E}(t)+\mathcal{F}(t) \leq 6C_0C_1\varepsilon^2,\\
		&	\overline{\mathcal{E}}(t)+ \overline{\mathcal{F}}(t) \leq 6C_0\overline{C}_1\varepsilon^2\label{supuesto1},
	\end{align}
and
\begin{equation}\label{condicionlambda}
	\sup_{t \in [0,T^{*}]}\norm{\tilde{\Lambda}}_{L^{\infty}(\mathbb{R})} \leq \dfrac{\lambda}{2}.
\end{equation}
	We want to show that for all $t\in [0,T^*]$ there exists a universal constant $\varepsilon_0$ (independent of $T^{*}$) such that the estimates are improved for all $\varepsilon \leq \varepsilon_0$. 
It is important recall that, the terms related to the functions $\sinh(\cdot)$, $\cosh(\cdot)$  $\coth(\cdot)$ and $\csch(\cdot)$ can be written using the Taylor expansion as:
\begin{equation}
	\begin{cases}
		\sinh(2\lambda +2\tilde{\Lambda})= \sinh(2\lambda)+2\cosh(2\lambda)\tilde{\Lambda}+ 4 \sinh(2\lambda +2\xi_1)\tilde{\Lambda}^2,\\
		\cosh(2\lambda +2\tilde{\Lambda})= \cosh(2\lambda)+2\sinh(2\lambda)\tilde{\Lambda} +4 \cosh(2\lambda +2\xi_2) \tilde{\Lambda}^2,\\
	\end{cases}\label{expansion}
\end{equation}
and {\color{blue} under \eqref{condicionlambda},}
{\color{blue}\begin{equation}
\begin{cases}
&\coth(\lambda+\tilde \Lambda)=\coth(\lambda)-\csch(\lambda)\tilde \Lambda - \csch(\lambda +\xi_3)\coth(\lambda +\xi_3)\tilde \Lambda^2,\\
&\csch^2(\lambda + \tilde \Lambda)=\csch^2(\lambda) -2 \csch^2(\lambda)\coth(\lambda)\tilde \Lambda \\
&\quad \quad \quad \quad \quad \quad \quad  +\left\{ 2\csch^2(\lambda +\xi_4)\coth^2(\lambda +\xi_4)+\csch^4(\lambda + \xi_4) \right\}\tilde{\Lambda}^2,
\end{cases}\label{expansion1}
\end{equation}}
with $\xi_1,\xi_2, \xi_3, \xi_4$ between $0$ and $\tilde{\Lambda}$, which satisfies \eqref{condicionlambda}. Then, from this condition \eqref{condicionlambda} and (\refeq{expansion}) one has 
{\color{blue}\begin{equation}\label{cotas}
\begin{aligned}
	&\abs{\sinh(2\lambda+ 2\tilde{\Lambda})}\leq \lambda_0(\lambda), \quad
	\abs{\cosh(2\lambda+2\tilde{\Lambda})}\leq \lambda_1(\lambda).\\
	& \abs{\coth(\lambda +\tilde \Lambda)} \leq \lambda_3(\lambda), \quad  \hbox{and} \quad  \abs{\csch(\lambda+\tilde \Lambda )}\leq \lambda_4(\lambda).
	\end{aligned}
\end{equation}}
Using the assumptions (\refeq{supuesto}) and (\refeq{supuesto1}) the following pointwise bounds were established in \cite{Luli2018}. 
\begin{lem}[\cite{Luli2018}, Lemma 3.2]\label{lema1}
	Under assumptions  \eqref{supuesto}-\eqref{condicionlambda}, there exists a universal constant  $C_2>0$ such that:
	\begin{align*}
		&	|L\tilde{\Lambda}(t,x)|\leq \dfrac{C_2 \varepsilon}{(1+|{\color{blue}\underline u} |^2)^{1/2+\delta/2}}, &  	|L\phi(t,x)|\leq \dfrac{C_2 \varepsilon}{(1+|{\color{blue} \underline u} |^2)^{1/2+\delta/2}},\\
		& 	|\underline{L}\tilde{\Lambda}(t,x)|\leq \dfrac{C_2 \varepsilon}{(1+|{\color{blue}u} |^2)^{1/2+\delta/2}}, &  	|\underline{L}\phi(t,x)|\leq \dfrac{C_2 \varepsilon}{(1+|{\color{blue}u} |^2)^{1/2+\delta/2}}.
	\end{align*} 	
\end{lem}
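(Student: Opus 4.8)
The plan is to upgrade the $L^2$-type control encoded in the weighted energies \eqref{energies} to the pointwise bounds by means of a one-dimensional Sobolev (Gagliardo--Nirenberg) inequality. The elementary fact I would use is that for any $g \in H^1(\mathbb{R})$ one has $\norm{g}_{L^\infty(\mathbb{R})}^2 \leq 2 \norm{g}_{L^2(\mathbb{R})} \norm{\partial_x g}_{L^2(\mathbb{R})}$, which follows immediately from writing $g(x)^2 = 2\int_{-\infty}^x g\,\partial_y g\, dy$ and applying Cauchy--Schwarz. Since all four quantities are handled identically, I would describe only the bound for $L\tilde{\Lambda}$; the bounds for $L\phi$, $\underline{L}\tilde{\Lambda}$ and $\underline{L}\phi$ follow by replacing the energy $\mathcal{E}$ with $\overline{\mathcal{E}}$ and/or the weight $(1+\abs{u}^2)$ with $(1+\abs{\underline{u}}^2)$.

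First I would fix a time slice $S_t$ and apply the inequality above to the weighted profile $w(t,x) := (1+\abs{u}^2)^{(1+\delta)/2}\, L\tilde{\Lambda}(t,x)$, viewed as a function of $x$ alone (recall $u=(t+x)/2$). The $L^2_x$-norm is controlled directly: $\norm{w(t,\cdot)}_{L^2(\mathbb{R})}^2 = \int_{S_t}(1+\abs{u}^2)^{1+\delta}\abs{L\tilde{\Lambda}}^2\,dx \leq \mathcal{E}_0(t) \leq \mathcal{E}(t)$, which is $\lesssim \varepsilon^2$ by the bootstrap assumption \eqref{supuesto}. For $\partial_x w$ I would use the product rule. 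The term where $\partial_x$ hits the weight produces the factor $\partial_x (1+\abs{u}^2)^{(1+\delta)/2}$; since $\partial_x u = \tfrac12$ and $\abs{u} \leq (1+\abs{u}^2)^{1/2}$, this factor is bounded by $(1+\abs{u}^2)^{\delta/2}$ up to a constant, so the contribution is again controlled by $\mathcal{E}_0(t)\lesssim \varepsilon^2$ (losing only half a power of the weight relative to the energy density). The term where $\partial_x$ hits $L\tilde{\Lambda}$ is where the commutation structure enters: since $L=\partial_t+\partial_x$ commutes with $\partial_x$, one has $\partial_x L\tilde{\Lambda} = L\partial_x\tilde{\Lambda}$, so this contribution equals $(1+\abs{u}^2)^{(1+\delta)/2}L\partial_x\tilde{\Lambda}$, whose $L^2_x$-norm squared is exactly $\mathcal{E}_1(t)\leq \mathcal{E}(t)\lesssim \varepsilon^2$.

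Combining, $\norm{w(t,\cdot)}_{L^2(\mathbb{R})}\lesssim \varepsilon$ and $\norm{\partial_x w(t,\cdot)}_{L^2(\mathbb{R})}\lesssim \varepsilon$, so the Sobolev inequality yields $\norm{w(t,\cdot)}_{L^\infty(\mathbb{R})}\lesssim \varepsilon$, which is precisely the claimed estimate $\abs{L\tilde{\Lambda}(t,x)}\leq C_2\varepsilon (1+\abs{u}^2)^{-1/2-\delta/2}$ after dividing back by the weight. The main obstacle, and the only place requiring genuine care, is the treatment of the weight: one must verify that differentiating $(1+\abs{u}^2)^{(1+\delta)/2}$ in $x$ costs only half a power of the weight (so the resulting term stays inside the zeroth-order energy $\mathcal{E}_0$), and that the commutator $[\partial_x,L]$ vanishes so that the remaining term is absorbed by the first-order energy $\mathcal{E}_1$. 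This is exactly the reason that only the energies of order $k=0,1$ appear in \eqref{energies}: a single $x$-derivative suffices to reach $L^\infty$ in one space dimension.
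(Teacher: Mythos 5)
Your proof is correct. The paper itself does not prove this lemma---it quotes it directly from \cite{Luli2018}, Lemma 3.2---and your argument (the 1D Agmon inequality $\norm{g}_{L^\infty}^2\leq 2\norm{g}_{L^2}\norm{\partial_x g}_{L^2}$ applied to the weighted profile $(1+\abs{u}^2)^{(1+\delta)/2}L\tilde{\Lambda}$, using that $[\partial_x,L]=[\partial_x,\underline{L}]=0$ and that differentiating the weight in $x$ loses only half a power so the resulting term is still controlled by $\mathcal{E}_0$, while the commuted term is exactly controlled by $\mathcal{E}_1$) is essentially the same fundamental-theorem-of-calculus plus Cauchy--Schwarz argument used in that reference, so nothing is missing.
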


{\color{blue}\begin{proof}
It is sufficient to prove one of the four inequalities, since the other inequalities are completely analogous. The proof is based on the boostrap assumptions  \eqref{supuesto}-\eqref{condicionlambda}. Indeed, according to the Sobolev inequality on $\mathbb{R}$, since 
\[\abs{\partial_x (1+\abs{\underline{u}}^2)^{1/2+\delta/2}} \leq  (1+\abs{\underline{u}}^2)^{1/2+\delta /2},\]
 one has 
 \begin{equation*}
 \begin{aligned}
 \abs{(1+\abs{\underline{u}}^2)^{1/2+\delta /2} L \phi (t,x)}^2 &\lesssim || (1+\abs{\underline{u}}^2)^{1/2+\delta /2} L\phi||_{L^2(\mathbb{R}_x)}^2  +   ||\partial_x( (1+\abs{\underline{u}}^2)^{1/2+\delta /2} L\phi)||_{L^2(\mathbb{R}_x)}^2\\
 & \lesssim   || (1+\abs{\underline{u}}^2)^{1/2+\delta /2} L\phi||_{L^2(\mathbb{R}_x)}^2  + ||\partial_x( (1+\abs{\underline{u}}^2)^{1/2+\delta /2}) L\phi||_{L^2(\mathbb{R}_x)}^2\\
    & \quad   + || (1+\abs{\underline{u}}^2)^{1/2+\delta /2} L\partial_x( \phi)||_{L^2(\mathbb{R}_x)}^2\\
 & \lesssim   || (1+\abs{\underline{u}}^2)^{1/2+\delta /2} L\phi||_{L^2(\mathbb{R}_x)}^2  + || (1+\abs{\underline{u}}^2)^{1/2+\delta /2} L\partial_x( \phi)||_{L^2(\mathbb{R}_x)}^2\\
& \lesssim \overline{\mathcal{E}}(t)\\
& \lesssim 6C_0\overline{C}_1 \varepsilon^2. 
 \end{aligned}
 \end{equation*}
Consequently, we have the desired inequality. 
\end{proof}}
Now we have all the ingredients to prove Theorem \ref{GLOBAL0}.
\subsection{Proof of the Theorem \ref{GLOBAL0}}
For simplicity, we work with the first equation of the system (\refeq{problema3}). On the other hand, we can substitute in the following proof $\tilde{\Lambda}$ by $\phi$ and then sum the estimates to complete the test for the original system, {\color{blue} see Appendix \ref{B} for details of the estimates for the second equation in \eqref{problema2}, which complete the proof.} We prove this using the bootstrap method; i.e., we will assume that this weighted energy is bounded by some constant. Then, we can show that the solution decays. Since the initial data are small, this allows us to show that the weighted energy is bounded by some better constant.  Thus, by continuity, we conclude that the weighted energy cannot grow to infinity in any finite time interval and therefore, using the local existence theorem, the solution exists for all time.

\begin{proof} Using (\refeq{NC2}) and (\refeq{derivadanullC}) in the first equation of the (\refeq{problema2}) we obtain:
	\begin{equation}\label{derivadas}
		 \square \partial_x \tilde{\Lambda}  = -2\Big[\sinh(2\lambda +2\tilde{\Lambda})\left(Q_0(\partial_x\phi,\phi)+Q_0(\phi,\partial_x\phi)\right)+2\partial_x\tilde{\Lambda}\cosh(2\lambda+2\tilde{\Lambda})Q_0(\phi,\phi)\Big].
	\end{equation}
We can see that the null structure is ``quasi-preserved'' after differentiating with respect to $x$. We will use a bootstrap argument as in the (3+1)-dimensional case \cite{klainerman}. Fix $\delta \in (0,1)$. Under the assumptions (\refeq{supuesto})-(\refeq{supuesto1})-(\refeq{condicionlambda}) for all $t\in [0,T^{*}]$, we assume that the solution remains regular, to later show that these bounds are maintained, with a better constant. 

Consider $k=0,1$. Using \eqref{EEnergia} on \eqref{derivadas}, with $\psi=\partial^k_x \tilde{\Lambda}$, and taking the sum over $k=0,1$, we obtain
\begin{equation}\label{EE2}
	\begin{aligned}
		&\mathcal{E}(t)+\mathcal{F}(t) \leq 2C_0 \mathcal{E}(0)\\
		&\quad +2C_0  \iint_{D_t} \left( (1+\left|u \right|^2)^{1+\delta} | \underline{L}\tilde{\Lambda}|+ (1+\left|\underline{u} \right|^2)^{1+\delta}|L \tilde{\Lambda}|\right) |\sinh(2\lambda +2\tilde{\Lambda})| |Q_0(\phi,\phi)| \\
		&   \quad          +4C_0\iint_{D_t} \left( (1+\left|u \right|^2)^{1+\delta} | \underline{L}\partial_x\tilde{\Lambda}|+ (1+\left|\underline u \right|^2)^{1+\delta}| L\partial_x \tilde{\Lambda}|\right) |\sinh(2\lambda+2\tilde{\Lambda})||Q_0(\phi,\partial_x\phi)| \\
				& \quad +4C_0 \iint_{D_t} \left( (1+\left|u \right|^2)^{1+\delta} | \underline{L}\partial_x\tilde{\Lambda}|+ (1+\left|\underline u \right|^2)^{1+\delta}| L\partial_x \tilde{\Lambda}|\right) |\partial_x\tilde{\Lambda} \cosh(2\lambda +2\tilde{\Lambda})||Q_0(\phi,\phi)| \\
		& =: 	2C_0 \mathcal{E}(0)+ 2C_0\sum_{j=1}^6 I_j,
	\end{aligned}
\end{equation}
{\color{blue}where the integrals $I_i, i\in \{ 1,2,3...,6\}$ are defined as follows:
\begin{equation}
\begin{aligned}
&I_1:=    \iint_{D_t} \left( (1+\left|u \right|^2)^{1+\delta} | \underline{L}\tilde{\Lambda}|\right) |\sinh(2\lambda +2\tilde{\Lambda})| |Q_0(\phi,\phi)|,\\
& I_2:=  \iint_{D_t}\left(  (1+\left| \underline u \right|^2)^{1+\delta}|L \tilde{\Lambda}|\right) |\sinh(2\lambda +2\tilde{\Lambda})| |Q_0(\phi,\phi)|,\\
& I_3:= 2 \iint_{D_t} \left( (1+\left|u \right|^2)^{1+\delta} | \underline{L}\partial_x\tilde{\Lambda}|\right) |\sinh(2\lambda+2\tilde{\Lambda})||Q_0(\phi,\partial_x\phi)|, \\
& I_4:= 2 \iint_{D_t} \left( (1+\left| \underline u \right|^2)^{1+\delta}| L\partial_x \tilde{\Lambda}|\right) |\sinh(2\lambda+2\tilde{\Lambda})||Q_0(\phi,\partial_x\phi)| ,\\
& I_5:= 2 \iint_{D_t} \left( (1+\left|u \right|^2)^{1+\delta} | \underline{L}\partial_x\tilde{\Lambda}|\right) |\partial_x\tilde{\Lambda}\cosh(2\lambda +2\tilde{\Lambda})||Q_0(\phi,\phi)|, \\
& I_6:=2\int_{D_t}\left( (1+\left| \underline u \right|^2)^{1+\delta}| L\partial_x \tilde{\Lambda}|\right) |\partial_x\tilde{\Lambda} \cosh(2\lambda +2\tilde{\Lambda})||Q_0(\phi,\phi)|.
\end{aligned}
\end{equation}}
	 The goal is to control the right-hand side of the above estimate. Essentially we have six terms to control, but several are equivalent and we only need to consider essentially two cases.  Indeed, it will be sufficient to bound the terms corresponding to $\underline{L}\tilde{\Lambda}$ and $\underline{L}\partial_x\tilde{\Lambda}$, since by symmetry, the procedure for the other terms will be analogous. 
First, we start to bound the term:
\begin{equation}\label{integral1}
\begin{aligned}
	I_{35} := I_3 +I_5= &~{} \iint_{D_t} \left( (1+\left|u \right|^2)^{1+\delta} \abs{ \underline{L}\partial_x\tilde{\Lambda}}\right) \\
	&~{} \qquad  \left(2  |\partial_x\tilde{\Lambda} \cosh(2\lambda +\tilde{\Lambda})||Q_0(\phi,\phi)|+2|\sinh(2\lambda+2\tilde{\Lambda})||Q_0(\phi,\partial_x\phi)|\right).
	\end{aligned}
\end{equation}
 Taking into account (\refeq{importante}), (\refeq{condicionlambda}) and (\refeq{expansion}), we can write for (\refeq{integral1}):
\begin{equation}\label{integral2}
\begin{aligned}
	I_{35} \lesssim &~{}  \iint _{D_t} \left( (1+\left|u \right|^2)^{1+\delta}| \underline{L}\partial_x\tilde{\Lambda}|\right)\left( |\partial_x\tilde{\Lambda}||L\phi|| \underline{L}\phi|+|L\partial_x\phi ||\underline{L}\phi|+ |L\phi| |\underline{L}\partial_x\phi|\right) \\
	=: &~{} I_{35,1}+I_{35,2} +I_{35,3}.
	\end{aligned}
\end{equation} 
Since $\partial_x\tilde{\Lambda}=\frac{1}{2}(L-\underline{L})\tilde{\Lambda}$, we have
\begin{equation}\label{I351}
\begin{aligned}
I_{35,1}= &~{} \iint _{D_t} (1+\left|u\right|^2)^{1+\delta}|\underline{L}\partial_x \tilde{\Lambda}||\partial_x\tilde{\Lambda}||L\phi||\underline{L}\phi| \\
 \leq &~{} \frac12  \iint_{D_t}(1+\left|u \right|^2)^{1+\delta}|\underline{L}\partial_x\tilde{\Lambda}||L\tilde{\Lambda}||L\phi||\underline{L}\phi|+(1+\left| \underline{u} \right|^2)^{1+\delta}|\underline{L}\partial_x \tilde{\Lambda}||\underline{L}\tilde{\Lambda}||L\phi||\underline{L}\phi| \\
 =: &~{} I_{35,1,1} +I_{35,1,2}.
\end{aligned}
\end{equation}
Recall that by Fubini's Theorem the spacetime $D_t$ in \eqref{D_t} is foliated by $\underline{C}_{\underline u}$ for $u \in \mathbb{R}$, and also by $\{t\}\times \Sigma_t$, $t\in\mathbb R$. Using Lemma \refeq{lema1} and defining $\varphi(x)=(1+\left|x \right|^2)^{1+\delta}$ (to simplify the notation), we have the following:
\begin{align*}
	  I_{35,1,1} \lesssim & \iint_{D_t} \varepsilon \underbrace{(\varphi(\underline u)^{-3/4}\varphi(u)^{1/2}|\underline{L}\partial_x \tilde{\Lambda}|)}_{L_t^2L_x^2}\underbrace{(\varphi^{1/2}(\underline u)|L\phi|)}_{L_t^{\infty}L_x^2}\underbrace{(\varphi(\underline u)^{-1/4}\varphi (u)^{1/2}|\underline{L}\phi|)}_{L_t^2 L_x^{\infty}}\\
	& \lesssim \varepsilon \underbrace{\left(\iint_{D_t} \dfrac{\varphi (u)|\underline{L}\partial_x \tilde{\Lambda}|^2}{\varphi(\underline u)^{3/2}} \right)^{1/2}}_{T_1} 
	\underbrace{\sup_{t\in [0,T^*]}\left(\int_{\Sigma_t}\varphi(\underline u)|L \phi|^2 \right)^{1/2}}_{T_2} 
	\underbrace{\left(  \int_{0}^{t} \norm{ \dfrac{\varphi (u)^{1/2}}{\varphi(\underline u)^{1/4}} |\underline{L}\phi|}_{L^{\infty}({\color{blue}\Sigma}_{\tau})}^{2} d\tau  \right)^{1/2}.}_{T_3}	
\end{align*}
Let us study each of the factors $T_j$. For $T_1$, one has:
\begin{align*}
	T_1^2 \leq & \int_{\mathbb{R}}\left[\int_{\underline{C}_{\underline u}} \dfrac{\varphi (u)|\underline{L}\partial_x \tilde{\Lambda}|^2}{\varphi(\underline u)^{3/2}} ds\right]d\underline{u} = \int_{\mathbb{R}} \dfrac{1}{\varphi  (\underline u)^{3/2}}\underbrace{\left[\int_{\underline{C}_{\underline u}} \varphi (u)|\underline{L}\partial_x \tilde{\Lambda}|^2ds \right]}_{\lesssim \mathcal{F}_1(t)} d\underline{u}
	\lesssim  \int_{\mathbb{R}} \dfrac{\varepsilon^2}{\varphi  (\underline u)^{3/2}}d\underline{u},
\end{align*} 
since the integral is finite, we have
$T_1 \lesssim \varepsilon.$
 The integral $T_2$ is part of the energy norm  $\overline{\mathcal{E}}_0(t)$ in \eqref{energies} then $T_2 \lesssim \varepsilon.$
For the term $T_3$ one can use the same argument as in \cite{Luli2018}: using Lemma \refeq{lemaaux} one gets
{\color{blue}\begin{align*}
	T_3 &\lesssim  \left(\int_0^t  \norm{\frac{\varphi(u)^{1/2}}{\varphi(\underline u)^{1/4}} \underline{L}\phi(t,x) }^2_{L^{2}(\Sigma_{\tau})} +\int_0^t  \norm{\frac{\varphi(u)^{1/2}}{\varphi(\underline u)^{1/4}} \underline{L}\partial_x\phi(t,x) }_{L^{2}(\Sigma_{\tau})}^2  \right)^{1/2} \\
	&\lesssim \left( \iint_{D_t} \frac{\varphi(u)}{\varphi(\underline u)^{1/2}}|\underline{L}\phi|^2 + \iint_{D_t} \frac{\varphi(u)}{\varphi(\underline u)^{1/2}}|\underline{L}\partial_x \phi|^2  \right)^{1/2}.
\end{align*}}
Both terms above are of the same form as $T_1$ and then we have that $T_3 \lesssim \varepsilon.$ We conclude  that $  I_{35,1,1} \lesssim  \varepsilon^4.$ 
 
 \medskip
 
Now we control the integral $I_{35,1,2}$ in \eqref{I351}. Using again Lemma \refeq{lema1} we have:
\begin{align*}
	I_{35,1,2} = &  \iint_{D_t}(1+\left|u \right|^2)^{1+\delta}|\underline{L}\partial_x \tilde{\Lambda}||\underline{L}\tilde{\Lambda}||L\phi||\underline{L}\phi|   \lesssim \iint_{D_t} \varepsilon^2 |\underline{L}\partial_x \tilde{\Lambda}||L\phi|\\
	=& \iint_{D_t} \varepsilon^2 \dfrac{\varphi (u)^{1/2}}{\varphi(\underline u)^{1/2}}|\underline{L}\partial_x\tilde{\Lambda}|\dfrac{\varphi(\underline u)^{1/2}}{\varphi (u)^{1/2}}|L\phi|\\
	\lesssim & \iint_{D_t} \varepsilon^2 \left( \dfrac{\varphi (u)}{\varphi(\underline u)}|\underline{L}\partial_x\tilde{\Lambda}|^2 + \dfrac{\varphi(\underline u)}{\varphi (u)}|L\phi|^2 \right)\\
	\lesssim & \int_{\mathbb{R}} \dfrac{\varepsilon^2}{\varphi(\underline u)}\underbrace{\left[ \int_{\underline{C}_{\underline u}} \varphi (u) |\underline{L}\partial_x \tilde{\Lambda}|^2ds \right]}_{\lesssim \mathcal{F}_1(t)} d\underline{u} + \int_{\mathbb{R}} \dfrac{\varepsilon^2}{\varphi(u)}\underbrace{\left[ \int_{C_{u}} \varphi (\underline u) |L\phi|^2ds \right]}_{\lesssim \overline{\mathcal{F}}_0 (t)} du
	\lesssim \varepsilon^4.
\end{align*}
Putting all estimates together for $I_{35,1},$ we can conclude that $I_{35,1} \lesssim \varepsilon^4.$ A similar result is obtained for $I_{46}:= I_4+I_6$. %Now let us consider a second case:\\ \newline

\medskip

Now we treat the term $I_1+I_2+I_{35,2}+I_{35,3}$ from \eqref{EE2} and \eqref{integral2}. We have from \eqref{importante} and \eqref{cotas},
\[
 \iint _{D_t} \varphi(u ) | \underline{L}\partial_x\tilde{\Lambda}| \left(  |L\partial_x\phi ||\underline{L}\phi|+ |L\phi| |\underline{L}\partial_x\phi|\right)+\iint_{D_t} \left( \varphi(u)| \underline{L}\tilde{\Lambda}|+ \varphi(\underline u)|L \tilde{\Lambda}|\right)\left( |L\phi ||\underline{L}\phi|+ |L\phi| |\underline{L}\phi|\right)
\]
Using the condition (\refeq{condicionlambda}), the situation matches Case 1 developed in \cite{Luli2018}.  All these integrals can be written as
\begin{align*}
	\sim  \iint_{D_t}  \left(\varphi (u) |\underline{L}\partial_x \tilde{\Lambda}||L\phi||\underline{L}\partial_x \phi| + \varphi (u) |\underline{L}\partial_x \tilde{\Lambda}||L\partial_x \phi| |\underline{L}\phi|\right).
\end{align*}
We bound this term in the following form: take $j,k\in\{0,1\}$, $\psi=\tilde \Lambda,\phi$, so that
\begin{equation}\label{bounds}
\begin{aligned}
	\iint_{D_t}\varphi(u)\left| \underline{L}\partial_x^{k}\psi\right|\left| L\psi \right|\left|\underline{L}\partial_x^j \psi\right| & \lesssim \iint_{D_t}\frac{\varepsilon}{\varphi (\underline u)^{1/2}}\varphi(u)\left| \underline{L}\partial_x^k \psi\right| \left| \underline{L}\partial_x^j\psi\right|\\
	  & \lesssim  \iint_{D_t}\frac{\varepsilon}{\varphi (\underline u)^{1/2}}\left(\varphi(u)\left|\underline{L}\partial_x^k \psi\right|^2+\varphi(u)\left|\underline{L}\partial_x^j \psi\right|^2\right)\\
	  & \lesssim \int_{\mathbb{R}}\left[ \int_{\underline{C}_{\underline u}}\frac{\varepsilon}{\varphi(\underline u)^{1/2}}\left(\varphi(u)\left|\underline{L}\partial_x^k \psi\right|^2+\varphi(u)\left|\underline{L}\partial_x^j \psi\right|^2\right)ds \right]d\underline{u}\\
	  &= \int_{\mathbb{R}}\frac{\varepsilon}{\varphi(\underline u)^{1/2}} \underbrace{\left[\int_{\underline{C}_{\underline{u}}}\left(\varphi(u)\left|\underline{L}\partial_x^k \psi\right|^2+\varphi(u)\left|\underline{L}\partial_x^j \psi\right|^2\right)ds\right]}_{\lesssim \mathcal E +\mathcal F + \overline{\mathcal{E}}+ \overline{\mathcal{F}}} d\underline{u}\\
	  & \lesssim \int_{\mathbb{R}} \frac{\varepsilon^3}{\varphi(\underline u)^{1/2}}d\underline{u} \lesssim \varepsilon^{3}.
\end{aligned}
\end{equation}
See also Luli, Yan and Yu \cite{Luli2018} for detailed computations. So we can conclude that in this case  we can bound them by $\varepsilon^3$. Finally, from  the energy estimate (\refeq{EEnergia}), we can take all the estimates together for some universal constant $C_4, C_5$ we have that for all $t\in [0, T^{*}]$:
\begin{equation}
	\mathcal{E}(t)+\mathcal{F}(t) \leq  2C_0C_1\varepsilon^2 + C_4 \varepsilon^3 + C_5 \varepsilon^4.
\end{equation}
Now, we take $\varepsilon_0 $ such that
\begin{equation}\label{condicionfinal}
	\varepsilon_0 \leq \dfrac{C_0 C_1}{C_4}, \quad  \varepsilon_0^2 \leq \dfrac{C_0 C_1}{C_5},
\end{equation}
we can see that for all $0 < \varepsilon \leq \varepsilon_0$ and for all $t\in [0, T]$, we have
\begin{equation}
	\mathcal{E}(t)+\mathcal{F}(t) \leq 4C_0C_1 \varepsilon^2.
\end{equation}
This improves the constant in (\refeq{supuesto})
In the same way, an analogous reasoning is used for the analysis of the equation in terms of $\phi$, using in this case the equation (\refeq{NC22}), which results in an improvement of the constant involved in the estimate (\refeq{supuesto1}). 

\medskip

To improve condition (\refeq{condicionlambda}),  using the Fundamental Theorem of Calculus and Lemma \refeq{lema1}, one can write $\tilde{\Lambda}(t,x)$, $t\geq 0$, in the following form:
\begin{align*}
	\abs{\tilde{\Lambda}(t,x)}&\leq \varepsilon \abs{\tilde{\Lambda}_0(x)} +\int_{0}^t \abs{\partial_t\tilde{\Lambda}(\tau,x)}d\tau\\
	&\leq \varepsilon K_1 +\dfrac{1}{2}\int_0^t \abs{L\tilde{\Lambda}+\underline{L}\tilde{\Lambda}}d\tau\\
	& \leq \varepsilon K_1+ \dfrac{1}{2}\int_0^t \left(\dfrac{C_2 \varepsilon}{\varphi(\underline u)^{1/2}} +\dfrac{C_2 \varepsilon}{\varphi(u)^{1/2}}\right)d\tau\\
	& \leq \varepsilon K_1 + \varepsilon C_2K_2\leq K \varepsilon,
\end{align*}
for some universal constant $K.$ Next, we take $\varepsilon_0 >0$ that satisfies the condition $(\refeq{condicionfinal})$ and such that
\begin{equation}\label{condfinal2}
	K\varepsilon_0 < \dfrac{\lambda}{4},
\end{equation}
taking $\sup$ over $t\in [0,T^{*}]$, we conclude that for all $0 < \varepsilon \leq \varepsilon_0$ we improved estimate (\refeq{condicionlambda}). As mentioned before, the proof is completed by doing an analogous study in terms of the $\phi$ field and then taking the sum over the estimates for the final conclusion.

\end{proof}
%%%%%%%%%%%%%%%%%%%%%%%%%%%%%%%%%%%%%%%%%%%%%	

	\section{Long time behavior}\label{Sect:4}

	Recall the energy introduced in \eqref{Energy}: 
	\begin{equation*}
		E[\Lambda,\phi](t) = \int
		\left( \dfrac{1}{2}((\partial_{x}\Lambda)^{2}+(\partial_{t}\Lambda)^{2})+2\sinh^{2}(\Lambda)((\partial_{x}\phi)^{2}+(\partial_{t}\phi)^{2}) \right)(t,x)dx.
	\end{equation*}
	We first start with a simple computation, already present in \cite{yaronhadad_2013}.
	\begin{lem}\label{conservation}
		If $\Lambda(t,x), \phi(t,x)$ are the solutions  of \eqref{eq:Chiral_0} with $\Lambda(t,x)\in C^{\infty}_0(\mathbb{R}) $ and $\phi(x)\in C^{\infty}_0(\mathbb{R})$ then the energy of the system is conserved, that is
		\begin{equation*}
			\frac{d}{dt}E[\Lambda,\phi](t)=0.
		\end{equation*}
	\end{lem}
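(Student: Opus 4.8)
The plan is to establish conservation by a direct Hamiltonian energy computation: differentiate $E[\Lambda,\phi](t)$ in time, integrate by parts in $x$ to move all spatial derivatives onto a single factor, substitute the equations of motion \eqref{sistema4}, and verify that every remaining term cancels. Since $\Lambda$ and $\phi$ are smooth with compact spatial support, I may differentiate under the integral sign and discard all boundary terms produced by the spatial integration by parts. First I would compute, using $4\sinh\Lambda\cosh\Lambda=2\sinh(2\Lambda)$ for the derivative of the weight $2\sinh^2\Lambda$,
\[
\frac{d}{dt}E = \int\Big(\partial_x\Lambda\,\partial_x\partial_t\Lambda + \partial_t\Lambda\,\partial_t^2\Lambda + 2\sinh(2\Lambda)\,\partial_t\Lambda\big((\partial_x\phi)^2+(\partial_t\phi)^2\big) + 4\sinh^2\Lambda\,\partial_x\phi\,\partial_x\partial_t\phi + 4\sinh^2\Lambda\,\partial_t\phi\,\partial_t^2\phi\Big)dx.
\]

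Next I would integrate by parts in $x$ on the two mixed-derivative terms. The identity $\partial_x(\sinh^2\Lambda)=\sinh(2\Lambda)\,\partial_x\Lambda$ turns $\int 4\sinh^2\Lambda\,\partial_x\phi\,\partial_x\partial_t\phi\,dx$ into $-\int\big(4\sinh(2\Lambda)\partial_x\Lambda\,\partial_x\phi + 4\sinh^2\Lambda\,\partial_x^2\phi\big)\partial_t\phi\,dx$, while $\int\partial_x\Lambda\,\partial_x\partial_t\Lambda\,dx=-\int\partial_x^2\Lambda\,\partial_t\Lambda\,dx$. After regrouping, both wave operators appear explicitly:
\[
\frac{d}{dt}E = \int\Big(\partial_t\Lambda\,(\partial_t^2\Lambda-\partial_x^2\Lambda) + 2\sinh(2\Lambda)\partial_t\Lambda\big((\partial_x\phi)^2+(\partial_t\phi)^2\big) + 4\sinh^2\Lambda\,\partial_t\phi\,(\partial_t^2\phi-\partial_x^2\phi) - 4\sinh(2\Lambda)\partial_x\Lambda\,\partial_x\phi\,\partial_t\phi\Big)dx,
\]
and I can then insert the system \eqref{sistema4}.

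Substituting $\partial_t^2\Lambda-\partial_x^2\Lambda=-2\sinh(2\Lambda)((\partial_x\phi)^2-(\partial_t\phi)^2)$ collapses the first two terms into $4\sinh(2\Lambda)\partial_t\Lambda(\partial_t\phi)^2$, while substituting the $\phi$-equation, whose coefficient $\sinh(2\Lambda)/\sinh^2\Lambda$ exactly cancels the weight $4\sinh^2\Lambda$, produces $-4\sinh(2\Lambda)(\partial_t\phi)^2\partial_t\Lambda + 4\sinh(2\Lambda)\partial_t\phi\,\partial_x\phi\,\partial_x\Lambda$. Adding the leftover term $-4\sinh(2\Lambda)\partial_x\Lambda\,\partial_x\phi\,\partial_t\phi$, the four contributions cancel in two pairs, giving $\frac{d}{dt}E=0$.

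The computation itself is routine; the only delicate point is that the cancellation is not generic but forced by the precise Hamiltonian structure, in particular by the factor $\sinh(2\Lambda)/\sinh^2\Lambda$ in the $\phi$-equation matching the $4\sinh^2\Lambda$ weight in the energy, and by the opposite signs of the $(\partial_x\phi)^2$ and $(\partial_t\phi)^2$ terms in the source of the $\Lambda$-equation. I would therefore keep careful track of these coefficients, and note that the apparent singularity of $\sinh(2\Lambda)/\sinh^2\Lambda$ is harmless here since the factor $\sinh^2\Lambda$ is absorbed by the weight before any division survives. The compact-support hypothesis is precisely what guarantees the integrability needed to differentiate under the integral and to drop boundary terms; for merely finite-energy solutions one would instead argue by a cutoff and approximation, but that is not required for the statement as posed.
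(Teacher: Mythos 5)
Your computation is correct: differentiation under the integral is justified by smoothness and compact support, the integrations by parts discard no nonzero boundary terms, and after substituting \eqref{sistema4} the four cubic terms cancel in pairs exactly as you describe; the structural point you highlight (the weight $4\sinh^2\Lambda$ absorbing the coefficient $\sinh(2\Lambda)/\sinh^2\Lambda$ of the $\phi$-equation) is indeed what forces the cancellation. For comparison: the paper does not actually write out a proof of this lemma --- it is attributed to \cite{yaronhadad_2013} as a ``simple computation'' --- but the argument it effectively supplies is the pointwise one in Lemma \ref{densities}, where the continuity equations \eqref{eq:continuidad}, in particular $\partial_t e(t,x)=\partial_x p(t,x)$ with $e,p$ as in \eqref{CE}, are verified by the same algebra; conservation then follows in one line by integrating in $x$ and discarding the flux at spatial infinity. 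Your proof is the integrated version of that identity: the same cancellations, performed after integration by parts rather than at the level of densities. The difference is small but not empty. The pointwise identity is stronger: it localizes the conservation law and is precisely what feeds the virial machinery of Section \ref{Sect:4} (Lemmas \ref{Virial2} and \ref{estimacion}), whereas your computation yields only the global statement --- which is, however, all the lemma asserts. One caveat worth keeping explicit: for compactly supported $\Lambda$ the coefficient $\sinh(2\Lambda)/\sinh^2\Lambda$ is genuinely singular wherever $\Lambda$ vanishes, so ``solution of \eqref{sistema4}'' must be read in the sense that $\sinh^2\Lambda\,(\partial_t^2\phi-\partial_x^2\phi)$ equals the regular right-hand side; your closing remark that no division survives in the energy computation is the correct way to make this rigorous, and it is the same convention implicitly used in the paper's proof of Lemma \ref{densities}.
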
 
%For a rigorous proof of this result, see Appendix \ref{A}.
\subsection{Energy and momentum densities} In terms of the fields $\Lambda$ and $\phi$, let us introduce the energy and momentum densities
	\begin{equation}\label{CE}
		\begin{aligned}
			&~{} p(t,x):= \partial_x\Lambda\partial_t\Lambda + 4\sinh^{2}(\Lambda)\partial_x\phi\partial_t\phi,\\
			&~{} e(t,x):=  \dfrac{1}{2}((\partial_x\Lambda)^{2}+(\partial_t\Lambda)^{2}) + 2\sinh^{2}(\Lambda)((\partial_x\phi)^{2}+(\partial_t\phi)^{2}).
		\end{aligned}
	\end{equation}
	\begin{lem}\label{densities}
		 Using the definition above in Eq. \eqref{CE}, one has the following continuity equations
		\begin{equation}\label{eq:continuidad}
			\begin{aligned}	
				\partial_{t}p(t,x) =&~{} \partial_{x} e(t,x), \\
				\partial_{t}e(t,x) =&~{} \partial_{x} p(t,x),
			\end{aligned}
		\end{equation}
		and the inequality
		\begin{equation}\label{desigualdad}
		|p(t,x)| \leq e(t,x).
		\end{equation}
	\end{lem}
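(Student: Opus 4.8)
The plan is to establish all three assertions by direct differentiation, taking the system \eqref{sistema4} as the only input, and to obtain the pointwise bound by completing squares.

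First I would prove $\partial_t p = \partial_x e$. Writing $\square=\partial_t^2-\partial_x^2$ and using $\partial_t(\sinh^2\Lambda)=\sinh(2\Lambda)\,\partial_t\Lambda$ together with $\partial_x(\sinh^2\Lambda)=\sinh(2\Lambda)\,\partial_x\Lambda$, I expand both $\partial_t p$ and $\partial_x e$ by the product rule. In the difference $\partial_t p-\partial_x e$ the mixed second-derivative contributions $\partial_t\partial_x\Lambda\,\partial_t\Lambda$, $\partial_x\partial_t\Lambda\,\partial_x\Lambda$ and the analogous $\phi$-terms cancel, leaving
\[
\partial_t p-\partial_x e=\partial_x\Lambda\,\square\Lambda+4\sinh^2\Lambda\,\partial_x\phi\,\square\phi+\big(\text{cubic terms from differentiating }\sinh^2\Lambda\big).
\]
I then substitute the right-hand sides of \eqref{sistema4}. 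The key simplification is that the prefactor $4\sinh^2\Lambda$ multiplying $\square\phi$ exactly cancels the denominator $\sinh^2\Lambda$ in the $\phi$-equation, so that every surviving term becomes a cubic monomial in first derivatives carrying a common factor $\sinh(2\Lambda)$. Grouping these by the monomials $\partial_x\Lambda(\partial_x\phi)^2$, $\partial_x\Lambda(\partial_t\phi)^2$ and $\partial_t\Lambda\,\partial_x\phi\,\partial_t\phi$, one checks that the numerical coefficients sum to zero in each group, giving $\partial_t p=\partial_x e$. The identity $\partial_t e=\partial_x p$ follows by the same computation with the roles of the $t$- and $x$-derivatives exchanged.

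For the inequality I would simply complete the square:
\[
e-p=\tfrac12(\partial_x\Lambda-\partial_t\Lambda)^2+2\sinh^2\Lambda\,(\partial_x\phi-\partial_t\phi)^2\ge 0,
\]
\[
e+p=\tfrac12(\partial_x\Lambda+\partial_t\Lambda)^2+2\sinh^2\Lambda\,(\partial_x\phi+\partial_t\phi)^2\ge 0,
\]
where nonnegativity uses only $\sinh^2\Lambda\ge 0$. Combining the two bounds yields $-e\le p\le e$, i.e.\ \eqref{desigualdad}.

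The computation carries no genuine analytic difficulty; the only real work is the bookkeeping in the termwise cancellation of the cubic expressions. This cancellation is not accidental: $(e,p)$ is precisely the energy--momentum current attached to \eqref{sistema4}, and the two continuity equations are the conservation laws associated with the invariance of the system under translations in $t$ and $x$. Recognizing this structure in advance tells me which terms must cancel and organizes the algebra, so I expect the main (and only) obstacle to be keeping the coefficients straight rather than any conceptual step.
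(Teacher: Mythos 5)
Your proposal is correct and takes essentially the same route as the paper: both prove the continuity equations by direct differentiation and substitution of \eqref{sistema4}, with exactly the cancellation you describe (the $4\sinh^2\Lambda$ prefactor cancelling the denominator of the $\phi$-equation, and the cubic monomials $\partial_x\Lambda(\partial_x\phi)^2$, $\partial_x\Lambda(\partial_t\phi)^2$, $\partial_t\Lambda\,\partial_x\phi\,\partial_t\phi$ summing to zero), and both get $|p|\le e$ from the elementary bound $|ab|\le\tfrac12(a^2+b^2)$ --- your completed squares on $e\pm p$ are just the paper's Cauchy-inequality step written out. The only addition is your closing Noether-current remark, which the paper leaves implicit.
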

	
	\begin{proof}
		
		First we prove $\partial_{t}p(t,x) = \partial_{x} e(t,x)$. Using \eqref{sistema4} we can prove the continuity  equation \eqref{eq:continuidad}.
		Let us star with the first derivatives
		\begin{align*}
			&\partial_{x}\left(-\dfrac{1}{2}((\partial_x\Lambda)^{2}+(\partial_t\Lambda)^{2})-2\sinh^{2}(\Lambda)((\partial_x\phi)^{2}+(\partial_t\phi)^{2})\right)\\
			&=  -\partial_x\Lambda \partial_x^2\Lambda+\partial_t\Lambda\partial_{tx}\Lambda-2\sinh(2\Lambda)((\partial_x\phi)^{2}+(\partial_t\phi)^{2})\partial_x\Lambda  -4\sinh^{2}(\Lambda)(\partial_x\phi \partial_x^2\phi+\partial_t\phi\partial_{tx}\phi),
		\end{align*}
		and	
		\begin{align*}	
			& \partial_{t}\left(\partial_x\Lambda\partial_t\Lambda  + 4\sinh^{2}(\Lambda)\partial_x\phi\partial_t\phi \right)\\ 
			& = \partial_{xt}\Lambda\partial_t\Lambda+\partial_x\Lambda\partial_t^2\Lambda+4\sinh(2\Lambda)\partial_t\Lambda\partial_x\phi \partial_t\phi  +4\sinh^{2}(\Lambda)\partial_{xt}\phi\partial_t\phi+4\sinh^{2}(\Lambda)\partial_z\phi \partial_{t}^2\phi.
		\end{align*}
		Subtracting these two last equations gives:
		\begin{align*}
			& 2\sinh{2\Lambda}((\partial_x\phi)^{2}+(\partial_t\phi)^{2})\partial_x\Lambda+\partial_x\Lambda(\partial_x^2\Lambda-\partial_t^2\Lambda)\\
			&+4\sinh^{2}(\partial_x^2\phi-\partial_t^2\phi)\partial_x\phi-4\sinh(2\Lambda)\partial_t\Lambda \partial_x\phi\partial_t\phi\\ & = 4\sinh(2\Lambda)(\partial_x\phi)^{2}\partial_x\Lambda-4(\partial_x\phi)^{2}\partial_x\Lambda\sinh(2\Lambda)+4\sinh(2\Lambda)\partial_x\phi\partial_t\phi\partial_t\Lambda -4\sinh(2\Lambda)\partial_t\Lambda\partial_x\phi\partial_t\phi=0.
		\end{align*}
		
Second, we prove $\partial_{t}e(t,x) = \partial_{x} p(t,x)$; in effect, using \eqref{sistema4}  we have
		\begin{equation*}
			\begin{aligned}
				\partial_t e(t,x)=&\partial_x\Lambda \partial_{xt}\Lambda + \partial_t \Lambda \partial_{tt}\Lambda  +2 \partial_t\Lambda\sinh(2\Lambda)((\partial_x\phi)^2+(\partial_t\phi)^2)\\
				& +4\sinh^2(\Lambda)\partial_x\phi \partial_{xt}\phi+4\sinh^2(\Lambda)\partial_t\phi \partial_{tt}\phi\\
				=& 	-2\partial_t\Lambda \sinh(2\Lambda)((\partial_x\phi)^2-(\partial_t\phi)^2)+2 \partial_t\Lambda\sinh(2\Lambda)((\partial_x\phi)^2+(\partial_t\phi)^2)\\
				&+4\sinh^2(\Lambda)\partial_x\phi \partial_{xt}\phi +4\partial_t\phi \partial_{xx}\phi\sinh^2(\Lambda)+4\sinh(2\Lambda)\partial_t\phi \partial_x\phi\partial_t\Lambda\\
				&+\partial_x \Lambda\partial_{xt}\Lambda+\partial_t\Lambda \partial_{xx}\Lambda-4(\partial_t\phi)^2\partial_t\Lambda\sinh(2\Lambda)\\
				& =\partial_x(\partial_x\Lambda \partial_t\Lambda +4\sinh^2(\Lambda)\partial_t\phi \partial_x\phi).
			\end{aligned}
		\end{equation*}	
		Then, the equation \eqref{eq:continuidad} is satisfied. As we can see, the continuity equation can be written explicitly as
		\begin{equation}
			\begin{aligned}
		&~{}	\partial_{t} \left( \partial_{x}\Lambda\partial_{t}\Lambda+4\sinh^{2}{\Lambda}\partial_{x}\phi\partial_{t}\phi \right)\\
			&~{}\quad \quad \quad \quad-\partial_{x}\left( \frac{1}{2}((\partial_{x}\Lambda)^{2}+(\partial_{t}\Lambda)^{2}) + 2\sinh^{2}(\Lambda)((\partial_{x}\phi)^{2}+(\partial_{t}\phi)^{2}) \right)=0.
			\end{aligned}
		\end{equation}
	To prove the inequality, let us take into account Cauchy's  inequality, then 
	\begin{equation*}
		\begin{aligned}
			&~{} |\partial_x\Lambda \partial_t\Lambda| \leq \frac12 \left( (\partial_x\Lambda)^2 + (\partial_t\Lambda)^2\right) ,\qquad	|\partial_x\phi \partial_t\phi|  \leq  \frac12 \left( (\partial_x\phi)^2+(\partial_t\phi)^2\right),
		\end{aligned}
	\end{equation*}
so that
\begin{equation}\label{desgEM}
	|p(t,x)|\leq \dfrac{1}{2}((\partial_x\Lambda)^{2}+(\partial_t\Lambda)^{2}) + 2\sinh^{2}(\Lambda)((\partial_x\phi)^{2}+(\partial_t\phi)^{2}).
\end{equation}
That is, the energy density exerts a control on the momentum density, which will be of key importance, since all the analysis and results will attempt to establish the energy space of the coupled system.
	\end{proof}
%	\subsection{Cauchy theory}
	\subsection{Virial estimate}
	The purpose of this section is to present a Virial identity
	which is related to the energy  presented above. Let us take into account certain considerations  following a proposal similar to the one used in \cite{Alejo2018}. However, in our case the semilinear character of the model enters and no smallness in a smaller space is needed. In what follows, we consider  $t \geq 2$ only, and
	\begin{equation}\label{lambda}
		\omega (t):=\dfrac{t}{\log ^2 t}, \quad \quad \frac{\omega'(t)}{\omega(t)}=\frac{1}{t}\left(1-\dfrac{2}{\log t}\right).
	\end{equation}
Furthermore, let us consider $(\Lambda,\phi)$ continuous in time such that $E[\Lambda,\phi](t)<+\infty$ is conserved. We introduce a  Virial identity for the chiral field equation \eqref{sistema4}. Indeed, let $\rho:= \tanh(\cdot)$, and let $\mathcal{I}(t)$ be defined as
	\begin{equation}\label{virial}
		\mathcal{I}(t):=-\int_{\mathbb R} \rho\left(\frac{x-vt}{\omega(t)}\right) \left( \partial_x \Lambda\partial_t \Lambda+4\partial_x \phi \partial_t \phi\sinh^2(\Lambda)\right)dx, \quad v\in (-1,1).
	\end{equation}
	A time-dependent weight was also
	considered in \cite{Alejo2018},  with the same goals. The choice of $\mathcal{I}(t)$ is  motivated by the momentum and energy densities. Recall that $\int = \int_{\mathbb R}.$
\begin{lem}[Virial identity]\label{Virial2} We have
	\begin{equation}\label{virial1}
		\begin{aligned}
			\frac{d}{dt}\mathcal{I}(t)=	&~{}  \dfrac{\omega'(t)}{\omega(t)}\int \frac{x-vt}{\omega(t)} \rho'\left(\frac{x-vt}{\omega(t)}\right)(\partial_x \Lambda\partial_t \Lambda+4\partial_x \phi \partial_t \phi\sinh^2(\Lambda))
			\\
			&~{} 	 +\frac{1}{\omega(t)}\int \rho'\left(\frac{x-vt}{\omega(t)}\right)\left(\frac{1}{2} (\partial_x\Lambda)^2+2(\partial_t\phi)^2\sinh^2(\Lambda) \right)	\\
			&~{} 	+\frac{1}{\omega(t)}\int\rho'\left(\frac{x-vt}{\omega(t)}\right) \left( \frac{1}{2}(\partial_t\Lambda)^2+2(\partial_x\phi)^2\sinh^2(\Lambda) \right)\\
			&~{} 	+ \frac{v}{\omega(t)}\int\rho'\left(\frac{x-vt}{\omega(t)}\right) \left( \partial_x \Lambda\partial_t \Lambda+4\partial_x \phi \partial_t \phi\sinh^2(\Lambda) \right).
		\end{aligned}
	\end{equation}
\end{lem}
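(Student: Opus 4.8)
The plan is to recognize $\mathcal{I}(t)$ as a weighted integral of the momentum density from \eqref{CE}. Writing $w:=w(t,x)=\frac{x-vt}{\lambda(t)}$, the definition \eqref{virial} reads $\mathcal{I}(t)=-\int \rho(w)\,p(t,x)\,dx$ with $p$ as in \eqref{CE}. Assuming (as will be justified at the end) that one may differentiate under the integral sign, the product rule gives
\begin{equation*}
\frac{d}{dt}\mathcal{I}(t)=-\int \rho'(w)\,(\partial_t w)\,p\,dx-\int \rho(w)\,\partial_t p\,dx.
\end{equation*}
The two terms will produce, respectively, the transport contributions (first and fourth lines of \eqref{virial1}) and the energy-density contribution (second and third lines).

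First I would compute $\partial_t w=-\frac{v}{\lambda(t)}-\frac{\lambda'(t)}{\lambda(t)}\,w$, which follows directly from \eqref{lambda}. Substituting this into the first term yields $\frac{v}{\lambda(t)}\int \rho'(w)\,p\,dx+\frac{\lambda'(t)}{\lambda(t)}\int w\,\rho'(w)\,p\,dx$. Recalling that $p=\partial_x\Lambda\partial_t\Lambda+4\sinh^2(\Lambda)\partial_x\phi\partial_t\phi$, these are exactly the fourth and first lines of \eqref{virial1}.

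For the second term I would invoke the continuity equation $\partial_t p=\partial_x e$ from Lemma \ref{densities} (equation \eqref{eq:continuidad}), so that $-\int \rho(w)\,\partial_t p\,dx=-\int \rho(w)\,\partial_x e\,dx$. Integrating by parts in $x$ and using $\partial_x w=\frac{1}{\lambda(t)}$ gives $\frac{1}{\lambda(t)}\int \rho'(w)\,e\,dx$, with $e$ as in \eqref{CE}. Splitting $e$ as $\big(\tfrac12(\partial_x\Lambda)^2+2\sinh^2(\Lambda)(\partial_t\phi)^2\big)+\big(\tfrac12(\partial_t\Lambda)^2+2\sinh^2(\Lambda)(\partial_x\phi)^2\big)$ then reproduces the second and third lines of \eqref{virial1}, completing the computation.

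The main obstacle is analytic rather than algebraic: since $\rho=\tanh$ is bounded but does not decay, I must justify both the differentiation under the integral sign and the vanishing of the boundary term $[\rho(w)\,e]_{x=-\infty}^{x=+\infty}$ in the integration by parts. Here the finiteness of the energy $E[\Lambda,\phi](t)=\int e\,dx$ is decisive: $e\in L^1$ together with the smoothness of the solution and the boundedness of the weights $\rho,\rho'$ and $w\rho'$ allows one to dominate the relevant integrands uniformly in $t$ on compact time intervals and to discard the boundary contributions, for instance via a standard cutoff/approximation argument. Once this is secured, the identity follows purely from the product rule, the formula for $\partial_t w$, and the continuity equation.
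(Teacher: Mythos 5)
Your proposal is correct and follows essentially the same route as the paper's proof: differentiate $\mathcal{I}(t)$ by the product rule, use $\partial_t p = \partial_x e$ from Lemma \ref{densities}, integrate by parts with $\partial_x w = 1/\lambda(t)$, and split $e$ into the two displayed groupings. Your additional discussion justifying the differentiation under the integral sign and the vanishing of the boundary term $\left[\rho(w)\,e\right]_{x=-\infty}^{x=+\infty}$ is a welcome point of rigor that the paper leaves implicit.
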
	

	\begin{proof} From \eqref{eq:continuidad} we readily have
		\begin{equation*}
			\begin{aligned}
			\dfrac{d }{d t}\mathcal{I}(t)= &~{} \dfrac{\omega'(t)}{\omega(t)}\int \rho' \left(\frac{x-vt}{\omega(t)}\right)\frac{x-vt}{\omega(t)}p(t,x)	 + \frac{v}{\omega(t)} \int \rho' \left(\frac{x-vt}{\omega(t)}\right) p(t,x)\\
			&~{} -\int \rho \left(\frac{x-vt}{\omega(t)}\right)\partial_x e(t,x),
			\end{aligned}
		\end{equation*}	
	using integration by parts and the Lemma \ref{densities}
	\begin{equation*}
		\begin{aligned}
			\frac{d}{dt}\mathcal{I}(t)= &~{} \dfrac{\omega'(t)}{\omega(t)}\int \rho' \left(\frac{x-vt}{\omega(t)}\right)\frac{x-vt}{\omega(t)}p(t,x) + \frac{v}{\omega(t)} \int \rho' \left(\frac{x-vt}{\omega(t)}\right) p(t,x)\\
			&~{}  +\frac{1}{\omega(t)}\int \rho' \left(\frac{x-vt}{\omega(t)}\right) e(t,x).  %\dfrac{\lambda'(t)}{\lambda(t)}\int \varphi' \left(\frac{x}{\lambda(t)}\right)\frac{x}{\lambda(t)}p(t,x)	+\frac{1}{\lambda(t)}\int \varphi' \left(\frac{x}{\lambda(t)}\right) e(t,x)
		\end{aligned}
	\end{equation*}
This proves \eqref{virial1} after replacing \eqref{CE}.
	\end{proof}
	
\subsection{Integration of the dynamics} 
The goal of this subsection is prove the Theorem \ref{LTD}, let us start with the following integral estimate
\begin{lem}\label{estimacion}
	Let $\omega(t)$ given as in \eqref{lambda}. Assume that the solution $(\Lambda, \phi)(t)$ of the system \eqref{sistema4} satisfies 
	\begin{equation}\label{condition}
	E[\Lambda, \phi](t)<+\infty.
\end{equation}
 Then  we have the averaged estimate
	\begin{equation}\label{desg1}
		\int_2^{\infty} \frac{1}{\omega(t)}\int_{\mathbb R}\sech^2\left(\frac{x-vt}{\omega(t)}\right)e(t,x)dxdt \lesssim  1,
	\end{equation}
	Moreover, there exists an increasing sequence $t_n\to +\infty$ such that 
	\begin{equation}\label{desg2}		\lim_{n\longrightarrow +\infty} \int_{\mathbb R} \sech^{2}\left(\frac{x-vt_n}{\omega(t_n)}\right)e(t_n,x)dx=0.
	\end{equation}
\end{lem}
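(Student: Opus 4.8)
The plan is to integrate the virial identity of Lemma \ref{Virial2} in time and to read off the coercive part. The second and third integrals on the right-hand side of \eqref{virial1} add up, after recalling $\rho'=\sech^2$ and the definition of $e$ in \eqref{CE}, to exactly
\[
G(t):=\frac{1}{\lambda(t)}\int \sech^2\!\left(\frac{x-vt}{\lambda(t)}\right) e(t,x)\,dx,
\]
which is precisely the integrand of \eqref{desg1} and satisfies $G(t)\ge 0$. Writing \eqref{virial1} schematically as $\frac{d}{dt}\mathcal I(t)=G(t)+E_1(t)+E_2(t)$, where $E_1$ carries the prefactor $\lambda'/\lambda$ and $E_2$ the prefactor $v/\lambda$, the objective is to show that $E_1$ and $E_2$ are dominated by a small multiple of $G$ plus a time-integrable remainder, whence $\int_2^\infty G<+\infty$.

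First I would dispose of the harmless pieces. Since $|\rho|=|\tanh|\le 1$, the inequality \eqref{desigualdad} gives $|\mathcal I(t)|\le \int |p|\le \int e=E[\Lambda,\phi](t)\lesssim \varepsilon^2$, so $\mathcal I$ is uniformly bounded by the conserved energy. For the last term, \eqref{desigualdad} together with $\rho'=\sech^2\ge 0$ yields $|E_2(t)|\le \frac{|v|}{\lambda}\int \sech^2(\cdot)\,|p|\le |v|\,G(t)$, which is absorbable because $|v|<1$.

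The heart of the matter is $E_1(t)=\frac{\lambda'}{\lambda}\int \frac{x-vt}{\lambda}\,\sech^2(\cdot)\,p\,dx$, which cannot be compared to $G$ directly. Setting $y=\frac{x-vt}{\lambda}$, using \eqref{desigualdad}, Cauchy--Schwarz with the splitting $|y|\sech^2 y=(\sech y)(|y|\sech y)$, and the elementary fact $\sup_y y^2\sech^2 y<+\infty$, I would estimate
\[
|E_1(t)|\le \frac{\lambda'}{\lambda}\left(\int \sech^2(\cdot)\,e\right)^{1/2}\left(\int y^2\sech^2(\cdot)\,e\right)^{1/2}\lesssim \frac{\lambda'}{\lambda^{1/2}}\,\varepsilon\, G(t)^{1/2},
\]
where I used $\int \sech^2(\cdot)\,e=\lambda\,G$ and $\int y^2\sech^2(\cdot)\,e\lesssim E\lesssim \varepsilon^2$. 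Young's inequality then gives, for any $\eta>0$, the bound $|E_1(t)|\le \eta\,G(t)+C_\eta\,\varepsilon^2\,\frac{(\lambda')^2}{\lambda}$. This is exactly where the precise choice $\lambda(t)=t/\log^2 t$ in \eqref{lambda} is used: one computes $\frac{(\lambda')^2}{\lambda}\sim \frac{1}{t\log^2 t}$, so that $\int_2^\infty \frac{(\lambda')^2}{\lambda}\,dt<+\infty$ and the remainder is time-integrable. I expect this Cauchy--Schwarz/Young step, and the verification that $(\lambda')^2/\lambda$ is integrable, to be the main obstacle.

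Combining the three estimates and integrating $\frac{d}{dt}\mathcal I=G+E_1+E_2$ over $[2,T]$, I would choose $\eta<1-|v|$ to absorb the $\eta\,G$ and $|v|\,G$ contributions, obtaining $(1-|v|-\eta)\int_2^T G\le 2\sup_t|\mathcal I|+C_\eta\varepsilon^2\int_2^\infty\frac{(\lambda')^2}{\lambda}\lesssim \varepsilon^2$ uniformly in $T$; letting $T\to\infty$ proves \eqref{desg1}. Finally, for \eqref{desg2}, put $H(t):=\int \sech^2(\cdot)\,e\,dx=\lambda(t)\,G(t)$, so that \eqref{desg1} reads $\int_2^\infty \frac{H(t)}{\lambda(t)}\,dt<+\infty$. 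Since $\int_2^\infty \frac{dt}{\lambda(t)}=\int_2^\infty \frac{\log^2 t}{t}\,dt=+\infty$, a positive lower bound $H(t)\ge \delta$ for all large $t$ would force the weighted integral to diverge; hence $\liminf_{t\to\infty}H(t)=0$, which yields an increasing sequence $t_n\to+\infty$ with $H(t_n)\to 0$, that is \eqref{desg2}. Thus $\lambda(t)=t/\log^2 t$ is calibrated so that $(\lambda')^2/\lambda$ is integrable (controlling $E_1$) while $1/\lambda$ is not (enabling the $\liminf$ extraction).
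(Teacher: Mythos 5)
Your proof is correct and follows essentially the same route as the paper: write $\frac{d}{dt}\mathcal{I}$ as coercive term plus errors, bound $\sup_t|\mathcal{I}(t)|$ by the conserved energy, absorb the $v$-term via $|p|\le e$ and $|v|<1$, control the $\lambda'/\lambda$ term using the boundedness of $y^2\sech^2 y$ and energy conservation so that the remainder $\sim \varepsilon^2 \lambda(t)/t^2 \sim \varepsilon^2/(t\log^2 t)$ is time-integrable, and then extract the sequence $t_n$ from the non-integrability of $1/\lambda$. The only cosmetic difference is that you use an integral Cauchy--Schwarz followed by Young's inequality, whereas the paper applies Young's inequality pointwise (splitting $p$ into its $x$- and $t$-derivative parts before integrating); both rest on identical ingredients and produce the same absorbed term and remainder.
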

In order to show Lemma \eqref{estimacion}, we use the new Virial identity for \eqref{virial} presented
for the Chiral Field Equation  \eqref{eq:Chiral_0}.
\begin{proof}
	First	note that, from the condition \eqref{condition} we have that clearly $\mathcal{I}(t)$ in \eqref{virial} is well defined. Here we use the fact that
	both $\partial_x\Lambda$ and $\partial_t\Lambda$ are small in $L^{\infty}$  thanks to the Sobolev embedding and in of the same form $\partial_x\phi$ and $\partial_t\phi$. Moreover 
	\begin{equation}\label{condition2}
		\sup_{t\in \mathbb{R}}|\mathcal{I}(t)|\lesssim E[\Lambda,\phi](t) \lesssim 1.
	\end{equation}
On the other hand, from Lemma \ref{Virial2}, we have the identity 
	\begin{equation}
		\frac{d}{dt}\mathcal{I}(t)=\mathcal{J}_1+\mathcal{J}_2,
	\end{equation}
	where 
	\begin{equation*}
		\begin{aligned}
			\mathcal{J}_1=\dfrac{\omega'(t)}{\omega(t)}\int \rho' \left(\frac{x-vt}{\omega(t)}\right)  \frac{x-vt}{\omega(t)} \left( \partial_x \Lambda\partial_t \Lambda+4\partial_x \phi \partial_t \phi\sinh^2(\Lambda) \right),
		\end{aligned}
	\end{equation*}
	and $\mathcal{J}_2$ is the remaining term of  \eqref{virial1}. From the definition of $\omega(t)$, \eqref{condition} and using Cauchy's inequality for $\delta>0$ small, we have:
	\begin{equation*}
		\begin{aligned}
			|\mathcal{J}_1|&~{}\leq \frac{2}{t} \int \frac{|x-vt|}{\omega(t)} \rho'\left(\frac{x-vt}{\omega(t)}\right)(|\partial_x \Lambda||\partial_t\Lambda|+4|\partial_x\phi||\partial_t\phi|\sinh^2(\Lambda))\\
			&~{}\leq \frac{8C_\delta}{t^2}\int\frac{(x-vt)^2}{\omega(t)}\rho'\left(\frac{x-vt}{\omega(t)}\right)\left(\frac{1}{2}(\partial_t\Lambda)^2+2(\partial_t\phi)^2\sinh^2(\Lambda)\right) \\
			&~{} \quad +\frac{\delta}{\omega(t)}\int\rho'\left(\frac{x-vt}{\omega(t)}\right)\left(\frac{1}{2}(\partial_x\Lambda)^2+2(\partial_x\phi)^2\sinh^2(\Lambda)\right)\\
			&~{}\leq \frac{8C_\delta\omega(t)}{t^2}\sup_{x\in \mathbb{R}}\left(\frac{(x-vt)^2}{\omega^2(t)}\rho' \left(\frac{x-vt}{\omega(t)}\right)\right)\int\left(\frac{1}{2}(\partial_t\Lambda)^2+2(\partial_t\phi)^2\sinh^2(\Lambda)\right)\\
			&~{}\quad +\frac{\delta}{\omega(t)}\int \rho'\left(\frac{x-vt}{\omega(t)}\right)\left(\frac{1}{2}(\partial_x\Lambda)^2+2 (\partial_t\phi)^2\sinh^2(\Lambda)\right)\\
			&~{}\leq \frac{C}{t\log^2t}+\frac{\delta}{\omega(t)}\int \rho'\left(\frac{x-vt}{\omega(t)}\right)\left(\frac{1}{2}(\partial_x\Lambda)^2+2(\partial_t\phi)^2\sinh^2(\Lambda)\right).
		\end{aligned}
	\end{equation*} 
	Furthermore, for $\mathcal J_2(t)$ we have
	\[
	\begin{aligned}
	\frac{|v|}{\omega(t)}\int\rho'\left(\frac{x-vt}{\omega(t)}\right) \left| \partial_x \Lambda\partial_t \Lambda+4\partial_x \phi \partial_t \phi\sinh^2(\Lambda) \right| = &~{} \frac{|v|}{\omega(t)}\int\rho'\left(\frac{x-vt}{\omega(t)}\right) |p(t,x)|\\
	\leq  &~{} \frac{|v|}{\omega(t)}\int\rho'\left(\frac{x-vt}{\omega(t)}\right) e(t,x).
	\end{aligned}
	\]
	With this estimate on $\mathcal{J}_1$ to obtain $1-|v|-\delta>0$ for $\delta>0$ sufficiently small, and
	\begin{equation}\label{rate_decay_1}
		\frac{d}{dt}\mathcal{I}(t)\geq \dfrac{1-|v| -\delta}{\omega(t)}\int \rho'\left(\frac{x-vt}{\omega(t)}\right)e(t,x)-\frac{C}{t\log^2t}.
	\end{equation}
After integration in time we get \eqref{desg1}. Finally, \eqref{desg2} is obtained from \eqref{desg1} and
the fact that $\omega^{-1}(t)$ is not integrable in $[2,\infty)$.
\end{proof}
	\begin{proof}[Proof of Theorem \ref{LTD}]
		Let us consider $\psi(\cdot)=(\rho^{\prime})^2=\sech^4(\cdot),$ then
\begin{equation*}
\begin{aligned}
	\dfrac{d}{dt}\int \psi\left( \frac{x-vt}{\omega(t)}\right)e(t,x)= &~{} -\frac{\omega^{\prime}(t)}{\omega(t)}\int \frac{x-vt}{\omega(t)}\psi^{\prime}\left(\frac{x-vt}{\omega(t)}\right)e(t,x) -\frac{v}{\omega(t)}\int \psi' \left( \frac{x-vt}{\omega(t)}\right)e(t,x) \\
	&~{} + \frac{1}{\omega(t)}\int \psi^{\prime} \left(\frac{x-vt}{\omega(t)}\right)p(t,x).
	\end{aligned}
\end{equation*}		
Since $\abs{\frac{x-vt}{\omega(t)}\psi^{\prime}\left( \frac{x-vt}{\omega(t)}\right) }\lesssim \sech^2\left(\frac{x-vt}{\omega(t)}\right)$ and $|p(t,x)|\leq e(t,x)$	we have:
	\begin{equation}\label{eq:Aux}
		\abs{\dfrac{d}{dt}\int \psi\left( \frac{x-vt}{\omega(t)}\right)e(t,x)}\leq \frac{C}{\omega(t)}\int \sech^2\left(\frac{x-vt}{\omega(t)}\right)e(t,x),
	\end{equation}
furthermore
\begin{equation}\label{eq:auxiliar}
	\lim_{n\longrightarrow \infty} \int \sech^{4}\left(\frac{x-vt_n}{\omega(t_n)}\right)e(t_n,x)=0. %Aquí estas bajo las hipotesis del virial, es decir, t_n es una sucesión creciente...ojo con eso!!!
	\end{equation}
Finally using (\refeq{eq:Aux}) for $t < t_n$
\begin{equation*}
\abs{\int \psi\left( \frac{x-vt_n}{\omega(t_n)}\right)e(t_n,x)-\int \psi\left( \frac{x-vt}{\omega(t)}\right)e(t,x)} \leq \int_t^{t_n} \frac{2}{\omega(s)}\int \sech^2\left( \frac{x-vs}{\omega(s)}\right)e(s,x)dxds,
\end{equation*}
sending $n$ to infinity, and using (\refeq{eq:auxiliar})  we have
\begin{equation}\label{rate_decay0}
	\abs{\int \psi\left( \frac{x-vt}{\omega(t)}\right)e(t,x)}\leq \int_{t}^{\infty} \frac{2}{\omega(s)}\int \sech^2\left( \frac{x-vs}{\omega(s)}\right)e(s,x)dxds,
\end{equation}
which implies, thanks to Lemma \refeq{estimacion},
\begin{equation*}
	\lim_{t \longrightarrow \infty} \int \sech^4\left( \frac{x-vt}{\omega(t)}\right)e(t,x) =0,
\end{equation*} 
which finally shows the validity of Theorem \refeq{LTD}. 
%{\color{blue} In order to show \eqref{rate_decay}, from \eqref{rate_decay_1} and \eqref{virial} we get
%\[
%\int_t^{t_n} \left(   \dfrac{1}{\omega(t)}\int \rho'\left(\frac{x-vt}{\omega(t)}\right)e(t,x) \right) dt  \leq C \left( \mathcal{I}(t_n) -\mathcal I(t) \right) + \frac{C}{\log t};
%\]
%and from \eqref{rate_decay0} we have for some $C>0$ independent of time,
%\[
%\begin{aligned}
%\abs{\int \psi\left( \frac{x-vt}{\omega(t)}\right)e(t,x)}\leq &~{}  \int_{t}^{\infty} \frac{2}{\omega(s)}\int \sech^2\left( \frac{x-vs}{\omega(s)}\right)e(s,x)dxds \\
%\leq &~{}  C \left( \mathcal{I}(t_n) -\mathcal I(t) \right) + \frac{C}{\log t} +\int_{t_n}^{\infty} \frac{2}{\omega(s)}\int \sech^2\left( \frac{x-vs}{\omega(s)}\right)e(s,x)dxds .
%\end{aligned}
%\]
%In particular, given any $\varepsilon>0$, if $n$ is large enough, 
%\[
%\abs{\int \sech^4 \left( \frac{x-vt}{\omega(t)}\right)e(t_n,x)}\leq  \frac{C}{\log t_n} +\varepsilon. % \int_{t_n}^{\infty} \frac{2}{\omega(s)}\int \sech^2\left( \frac{x-vs}{\omega(s)}\right)e(s,x)dxds.
%\]
%This estimate proves \eqref{rate_decay}.
%}
\end{proof}

\section{Application to soliton solutions}\label{Sect:5}

In this section, we apply our previous results to prove existence of global solutions around a new class of soliton solutions of finite energy. First, we consider the case treated by Hadad in \cite{yaronhadad_2013}. See also \cite{letelier1986, economou1989, letelier1985static} for other cases of soliton-like solutions not treated here.

\subsection{Singular solitons}  Consider the soliton introduced in \eqref{soliton}. We claim that this solution is singular in the narrow sense that the energy of the system for this soliton is not finite.  Our first result is the following straightforward computation:
\begin{lem}\label{LP_soliton}
One has, 
\begin{equation}\label{LP_new}
\begin{aligned}
	\Lambda(t,x)= & \ln(|v|\cosh(t)) \\
	& +\ln\left(1-\frac{\tanh(t)\tanh(\sqrt{c}(x-v t))}{|v|\sqrt{c}}+\sqrt{\left(1-\frac{\tanh(t)\tanh(\sqrt{c}(x-v t))}{|v|\sqrt{c}}\right)^2-\frac{\sech^2(t)}{|v|^2}}\right),\\
	\phi(t,x)=& \frac\pi4 -\frac12\arctan \left[ \cosh(t)\cosh(\sqrt{c}(x-v t))(\tanh(\sqrt{c}(x-v t))+v\sqrt{c}\tanh(t))\right].
\end{aligned}
\end{equation}
Moreover, for $E_\text{mod}$ given in \eqref{Energia_simpleM}, the previous solution gives
\begin{equation}\label{Energy_mod_LP}
	E_\text{mod}[\Lambda,\phi](t) =0.
\end{equation}
\end{lem}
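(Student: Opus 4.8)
The plan is to read off $\Lambda$ and $\phi$ by inverting the parametrization \eqref{diag1} (with $\alpha\equiv 1$) on the explicit matrix $g^{(1)}$ of \eqref{soliton}, and then to obtain the energy identity \eqref{Energy_mod_LP} from the density equations of Lemma \ref{densities} rather than by brute-force differentiation. From \eqref{diag1} one has the inversion formulas $\cosh\Lambda=\tfrac12(g_{11}+g_{22})$, $\cos 2\phi\,\sinh\Lambda=\tfrac12(g_{11}-g_{22})$ and $\sin 2\phi\,\sinh\Lambda=g_{12}$, so that $\Lambda=\ln\!\big(\cosh\Lambda+\sqrt{\cosh^2\Lambda-1}\big)$ on the principal branch $\Lambda>0$ and $\tan 2\phi=2g_{12}/(g_{11}-g_{22})$. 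For consistency with $\alpha\equiv1$ I would first verify $\det g^{(1)}=1$: writing $y:=\sqrt c\,(x-vt)$ and using $\sqrt c\,x_0=\ln|\mu|$, a short computation gives $\det g^{(1)}=|v|^2-1/c$, which equals $1$ precisely because the definitions of $c,v$ in terms of $\mu$ produce the algebraic identity $c(|v|^2-1)=1$.

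The formulas \eqref{LP_new} then follow from elementary hyperbolic algebra. The crucial simplifications are $\cosh(\ln|\mu|)=|v|$ and $\sinh(\ln|\mu|)=1/\sqrt c$, which give $Q_c(x-vt)/Q_c(x-vt\mp x_0)=\cosh(y\mp\ln|\mu|)/\cosh y=|v|\mp\tanh(y)/\sqrt c$, hence $g_{11}=e^t(|v|-\tanh(y)/\sqrt c)$ and $g_{22}=e^{-t}(|v|+\tanh(y)/\sqrt c)$. Substituting into $\cosh\Lambda=\tfrac12(g_{11}+g_{22})$ collapses the sum to $|v|\cosh t-\sinh t\,\tanh(y)/\sqrt c=|v|\cosh t\,\big(1-\tanh t\,\tanh(y)/(|v|\sqrt c)\big)$, and factoring $|v|\cosh t$ out of $\cosh\Lambda+\sqrt{\cosh^2\Lambda-1}$ yields exactly the stated $\Lambda$. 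For $\phi$, the same substitution gives $g_{11}-g_{22}=2\cosh t\,(|v|\tanh t-\tanh(y)/\sqrt c)$ and $g_{12}=-\sech(y)/\sqrt c$, so $\tan 2\phi=1/W$ with $W=\cosh t\cosh(y)\,(\tanh(y)+v\sqrt c\,\tanh t)$ (using $v=-|v|$); since $\tan(\tfrac\pi2-\arctan W)=1/W$ this is precisely $\phi=\tfrac\pi4-\tfrac12\arctan W$, the additive branch being fixed by $\phi\to 0$ as $x\to+\infty$.

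For the energy I would avoid differentiating \eqref{LP_new} at general $t$ and instead evaluate the densities \eqref{CE} at $t=0$ and propagate. At $t=0$ the solution is remarkably simple: $\cosh\Lambda=|v|$ is $x$-independent, so $\Lambda(0,\cdot)=\operatorname{arccosh}|v|$ and $\partial_x\Lambda(0,\cdot)=0$; a direct computation gives $\partial_t\Lambda(0,x)=-\tanh(\sqrt c x)/\big(\sqrt c\sqrt{|v|^2-1}\big)$, $\partial_t\phi(0,\cdot)=0$ and $\partial_x\phi(0,x)=-\tfrac{\sqrt c}{2}\sech(\sqrt c x)$. Plugging these into $e(t,x)$ and using $\sinh^2\Lambda=|v|^2-1$ together with $c(|v|^2-1)=1$, the density telescopes to $e(0,x)=\tfrac12\tanh^2(\sqrt c x)+\tfrac12\sech^2(\sqrt c x)=\tfrac12$, while $p(0,x)=0$ because $\partial_x\Lambda(0,\cdot)=\partial_t\phi(0,\cdot)=0$. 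Thus $e(0,\cdot)\equiv\tfrac12$ and $p(0,\cdot)\equiv0$.

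Finally I would propagate this pointwise identity using Lemma \ref{densities}: the continuity equations $\partial_t p=\partial_x e$ and $\partial_t e=\partial_x p$ imply that both $e$ and $p$ solve the one-dimensional wave equation $\partial_t^2(\cdot)-\partial_x^2(\cdot)=0$, and they supply the Cauchy data $\partial_t e(0,\cdot)=\partial_x p(0,\cdot)=0$ and $\partial_t p(0,\cdot)=\partial_x e(0,\cdot)=0$. Since $e(0,\cdot)\equiv\tfrac12$ and $\partial_t e(0,\cdot)\equiv0$, uniqueness for the wave equation (d'Alembert) forces $e\equiv\tfrac12$ throughout spacetime, whence $E_\text{mod}[\Lambda,\phi](t)=\int\big(e-\tfrac12\big)\,dx=0$ for every $t$, which is \eqref{Energy_mod_LP}. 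I expect the main work to be twofold: the bookkeeping that reduces both $\det g^{(1)}$ and $e(0,x)$ to the single identity $c(|v|^2-1)=1$, and verifying that $\Lambda$ stays bounded away from $0$ so that the densities are smooth and the interchange of derivatives and the uniqueness step are legitimate (otherwise one restricts the argument to the region where $g^{(1)}$ is regular).
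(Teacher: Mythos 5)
Your proposal is correct, and it splits into two halves relative to the paper. For the formulas \eqref{LP_new} you do essentially what the paper does: invert the parametrization \eqref{diag1} against \eqref{soliton}, use $\cosh(\ln\mu)=-v=|v|$ and $\sinh(\ln\mu)=1/\sqrt{c}$, and add/subtract the diagonal entries to get $\cosh\Lambda=|v|\cosh t-\sinh t\tanh(\gamma)/\sqrt{c}$ and $\cot 2\phi=\cosh t\sinh\gamma+\sqrt{c}\,v\sinh t\cosh\gamma$, with $\gamma=\sqrt c\,(x-vt)$; your check that $\det g^{(1)}=|v|^2-1/c=1$ via $c(|v|^2-1)=1$ is a small addition the paper leaves implicit. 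Where you genuinely diverge is the energy identity \eqref{Energy_mod_LP}. The paper computes all four first derivatives of $(\Lambda,\phi)$ at general $(t,x)$, assembles the two blocks of the density, and then evaluates the spatial integral ``with the help of the computer algebra system Mathematica.'' You instead evaluate the densities \eqref{CE} only at $t=0$, where they collapse (via $\partial_x\Lambda(0,\cdot)=\partial_t\phi(0,\cdot)=0$ and $c(|v|^2-1)=1$) to $e(0,\cdot)\equiv\tfrac12$, $p(0,\cdot)\equiv 0$, and then propagate through Lemma \ref{densities}: the continuity equations \eqref{eq:continuidad} force $e$ and $p$ to be classical solutions of the free $1$D wave equation, so d'Alembert with data $(\tfrac12,0)$ gives $e\equiv\tfrac12$ on all of spacetime. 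This buys you three things over the paper's argument: no computer algebra; a strictly stronger conclusion (the modified energy \emph{density} $e-\tfrac12$ vanishes pointwise, not merely its integral); and a proof that is self-contained within the paper's own lemmas. Your closing caveat resolves affirmatively: one does not need $\Lambda$ uniformly bounded away from zero, only pointwise positivity, and indeed $\cosh\Lambda=|v|\cosh t-\sinh t\tanh(\gamma)/\sqrt{c}>\cosh\bigl(\operatorname{arccosh}|v|-|t|\bigr)\geq 1$ at every finite point because $|\tanh\gamma|<1$; hence $\sinh\Lambda>0$ everywhere, the fields are smooth, \eqref{sistema4} holds, and the wave-equation uniqueness step (which is local, by finite propagation speed) is legitimate, even though $\Lambda\to 0$ as $x\to\pm\infty$ at the times $|t|=\operatorname{arccosh}|v|$. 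One blemish, which you share with the paper's own derivation and with the statement itself, is the branch in solving $\cot 2\phi=W$: with $\Lambda>0$ and $\phi=\tfrac\pi4-\tfrac12\arctan W$ one gets $\sin 2\phi>0$, which reconstructs $g^{(1)}$ with the sign of the off-diagonal entry flipped (equivalently, the conjugate solution by $\mathrm{diag}(1,-1)$); this gauge-type ambiguity of \eqref{diag1} affects neither your energy argument nor the paper's.
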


\begin{rem}
Notice that $g^{(0)}$ in \eqref{g0} has also zero modified energy. This is in concordance with the fact that $g^{(1)}$ is obtained from $g^{(0)}$ as seed. 
\end{rem}

\begin{proof}	
We use the notation in \eqref{soliton} and $\gamma:=\sqrt{c}(x-v t)$. Comparing the soliton (\refeq{soliton}) with (\refeq{diag1}) we have the following equations:
\begin{eqnarray}
	 e^t[\cosh(\ln \mu)-\sinh(\ln \mu)\tanh(\gamma)] &= & \cosh(\Lambda)+\cos(2\phi)\sinh(\Lambda),\\
	  e^{-t}[\cosh(\ln \mu)+\sinh(\ln \mu)\tanh(\gamma)]& =& \cosh(\Lambda)-\cos(2\phi)\sinh(\Lambda),\\
	 -\dfrac{1}{\sqrt{c}\cosh(\gamma)} &= & \sin(2\phi)\sinh(\Lambda),\label{eq:ultima}
\end{eqnarray}
where
\begin{eqnarray*}
\cosh(\ln(\mu))= \dfrac{\mu^2+1}{2\mu}=-v,&
	\sinh(\ln (\mu))=\dfrac{\mu^2-1}{2\mu}=\dfrac{1}{\sqrt{c}},
\end{eqnarray*}
 adding  the first two equations we obtain:
\[
	 -v\cosh(t)-\frac{1}{\sqrt{c}}\sinh(t)\tanh(\gamma)=\cosh \Lambda.
\]
Then, since we have the constraint $\mu >1$ we can  write the expression for $\Lambda$ as
\begin{align*}
	\Lambda(t,x)=
	%& \cosh^{-1}\left( -v\cosh(t)-\frac{1}{\sqrt{c}}\sinh(t)\tanh(\sqrt{c}(x-v t))\right)\\
	&\ln(|v|\cosh(t)) \\
	& +\ln\left(1-\frac{\tanh(t)\tanh(\gamma)}{|v|\sqrt{c}}+\sqrt{\left(1-\frac{\tanh(t)\tanh(\gamma)}{|v|\sqrt{c}}\right)^2-\frac{1}{|v|^2}\sech^2(t)}\right).
\end{align*}
Next, subtracting the same equations and using (\refeq{eq:ultima})
\begin{eqnarray*}
	&-v\sinh(t)-\frac{1}{\sqrt{c}}\cosh(t)\tanh(\gamma)=\cos(2\phi)\sinh(\Lambda),\\
	& -v\sinh(t)-\frac{1}{\sqrt{c}}\cosh(t)\tanh(\gamma)=-\frac{1}{\sqrt{c}}\cot(2\phi)\sech(\gamma)\\
	&  \sinh(\gamma)\cosh(t) + \sqrt{c}v\sinh(t)\cosh(\gamma)=\cot(2\phi),
\end{eqnarray*}
In order to make sense, one needs $\sin(2\phi)\neq 0,$ i.e., $\phi \neq \dfrac{n \pi}{2}$. Therefore, we can write:
\begin{align*}
	\phi(t,x)%=&\frac{1}{2}\cot^{-1}\left(\sinh(\gamma)\cosh(t)-\omega\sinh(t)\cosh(\gamma)\right)\\
	 =&\frac\pi4 -\frac12\arctan \left(\sinh(\gamma)\cosh(t) + \sqrt{c}v\sinh(t)\cosh(\sqrt{c}(x-v t))\right)\\
	 =&{\color{black} \frac\pi4 -\frac12\arctan \left( \cosh(t)\cosh(\sqrt{c}(x-v t))(\tanh(\sqrt{c}(x-v t))+v\sqrt{c}\tanh(t))\right).}
\end{align*}
Now, let us study the derivatives of the $\Lambda$ and $\phi$ fields. Assuming the constraints for the parameter $\mu$ we have that 
\begin{equation*}
	\begin{aligned}
	&\partial_x \Lambda= -\dfrac{\sinh(t)\sech^2(\gamma)}{\sqrt{(-v\cosh(t)-\frac{1}{\sqrt{c}}\sinh(t)\tanh(\gamma))^2-1}},\\
	&\partial_t \Lambda = \frac{\tanh(\gamma)\left(-v\sinh(t)\tanh(\gamma)-\frac{1}{\sqrt{c}}\cosh(t)\right)}{\sqrt{(-v\cosh(t)-\frac{1}{\sqrt{c}}\sinh(t)\tanh(\gamma))^2-1}}.
	\end{aligned}
	\end{equation*}
Additionally,
\begin{equation*}
	\begin{aligned}
		&\partial_x \phi=-\frac{1}{2}\left(\dfrac{\sqrt{c}\cosh(t)\cosh(\gamma)(1+v\sqrt{c}\tanh(t)\tanh(\gamma))}{1+(\sinh(\gamma)\cosh(t)+v\sqrt{c}\sinh(t)\cosh(\gamma))^2}\right), \\
		&\partial_t\phi=-\frac12\left( \dfrac{(1-cv^2)\sinh(t)\sinh(\gamma)}{1+(\sinh(\gamma)\cosh(t)+v\sqrt{c}\sinh(t)\cosh(\gamma))^2}\right).
	\end{aligned}
\end{equation*}
Simplifying, the energy density is:
\begin{equation*}
\begin{aligned}
& (\partial_x \Lambda)^2+(\partial_t \Lambda)^2-1\\
& =\frac{\sinh^2(t)\sech^4(\gamma)-v^2\sinh^2(t)\sech^2(\gamma)(\tanh^2(\gamma)+1)-\frac{v}{\sqrt{c}}\sinh(2t)\tanh(\gamma)\sech^2(\gamma)-v^2+1}{(-v\cosh(t)-\frac{1}{\sqrt{c}}\sinh(t)\tanh(\gamma))^2-1},
\end{aligned}	
\end{equation*}
and 
\begin{equation*}
\begin{aligned}
& \sinh^2(\Lambda)((\partial_x \phi)^2+(\partial_t \phi)^2) \\
& = \frac{c\cosh^2(t)\cosh^2(\gamma)-\frac{vc}{2}\sinh(2t)\sinh(2\gamma)+(c^2v^4-cv^2+1)\sinh^2(t)\sinh^2(\gamma)}{(1+(\sinh(\gamma)\cosh(t)+v\sqrt{c}\sinh(t)\cosh(\gamma))^2)^2}.
\end{aligned}
\end{equation*}
Then, the integrals can be calculated with the help of the computer algebra system Mathematica, obtaining that the soliton has finite modified energy, in fact, we have that
\[
E_\text{mod}[\Lambda,\phi](t)=0.
\]
With the results obtained we can see that the $\Lambda$ and $\phi$ fields associated to the soliton (\refeq{soliton}) do not belong to the energy space proposed in the previous sections. 
\end{proof}

\subsection{Finite energy solitons} In this final section we consider the case of finite energy solitons, their perturbations, and a corresponding global well-posedness result.

\begin{proof}[Proof of Corollary \ref{aplication}:]
	Identifying the 1-soliton in \eqref{solitonG}
	\begin{equation}\label{solitonG0}
		g^{(1)}=	\left[\begin{array}{cc}
			\dfrac{e^{\lambda +\varepsilon\theta}\sech(\beta(\lambda + \varepsilon \theta))}{\sech(\beta(\lambda + \varepsilon \theta)-x_0)} & -\dfrac{1}{\sqrt{c}}\sech(\beta(\lambda + \varepsilon \theta))\\
			-\dfrac{1}{\sqrt{c}}\sech(\beta(\lambda + \varepsilon \theta))& \dfrac{e^{-(\lambda + \varepsilon \theta)}\sech(\beta(\lambda + \varepsilon \theta))}{\sech(\beta(\lambda + \varepsilon \theta)+x_0)}
		\end{array} \right], \qquad \beta=\frac{\mu +1}{\mu -1},
	\end{equation}
with the geometrical representation (\refeq{diag1}), one gets the corresponding fields $\hat{\Lambda}_{\varepsilon}$ and $\hat{\phi}_{\varepsilon}$, which have the following form:
\begin{equation*}
\begin{aligned}
	\hat{\Lambda}_\varepsilon(t,x):= &~{} \cosh^{-1}\left(|v|\cosh(\lambda+\varepsilon \theta)-\frac{1}{\sqrt{c}}\tanh(\beta(\lambda+\varepsilon\theta))\sinh(\lambda+\varepsilon \theta)\right),\\
	\hat{\phi}_\varepsilon(t,x):=  &~{}  \frac{\pi}{4}-\dfrac{1}{2}\arctan\left(\cosh(\beta(\lambda+\varepsilon\theta))\cosh(\lambda+\varepsilon \theta)(\tanh(\beta(\lambda+\varepsilon \theta))+v\sqrt{c}\tanh(\lambda+\varepsilon \theta))\right),
\end{aligned}	
\end{equation*}
which are solutions for (\refeq{problema3}). From now on we drop $\varepsilon$ to make the notation less cumbersome. 

\medskip

We claim that $\hat{\Lambda}$  have the desired local and global well-posedness properties. Indeed, note that since  $0< \mu<1$, then $|v|>1$ and $\beta <0,$ so, for all $t,x\in \mathbb{R}$
\begin{equation*}
\begin{aligned}
&	|v|\cosh(\lambda+\varepsilon \theta)-\frac{1}{\sqrt{c}}\tanh(\beta(\lambda+\varepsilon\theta))\sinh(\lambda+\varepsilon \theta)\\
& \qquad  \geq |v|+\frac{1}{\sqrt{c}}\tanh(|\beta|(\lambda+\varepsilon\theta))\sinh(\lambda+\varepsilon \theta) > 1,
\end{aligned}
\end{equation*}
therefore, $\hat{\Lambda}$ is well-defined and $\hat{\Lambda}(t,x) > 0 $ for all $t,x\in \mathbb{R}$. Also, since $\theta \in L^{\infty}(\mathbb{R})$, for each $t$, we have to that $\hat{\Lambda}$ is a bounded function. Since $\theta\in C_0^{2},$ we have that 
\begin{equation*}
%\lim\limits_{x\longrightarrow \pm\infty}
\hat{\Lambda}(t=0,x)=C(\lambda), \quad \forall x\in \mathbb{R}\setminus \supp \theta,
\end{equation*}
then, define $\tilde{\lambda}:= C(\lambda)$, which allows us to write $\hat{\Lambda}:= \tilde{\Lambda}+ \tilde{\lambda}$. For the  function $\tilde{\Lambda}$ one has
\begin{eqnarray}
	\tilde{\Lambda}|_{\{t=0\}}= \varepsilon\tilde{\Lambda}_0, \quad \mbox{with}\quad  \tilde{\Lambda}_0\in C_0^{2}(\mathbb{R}).
	\end{eqnarray}
where $\tilde{\Lambda}_0$ is defined as:
% With the definition of $\tilde{\lambda}$ we have
%\begin{align*} \tilde{\Lambda}(t=0,x)&=\hat{\Lambda}(t=0,x)-\cosh^{-1}\left(|v|\cosh(\lambda)-\frac{1}{\sqrt{c}}\tanh(\beta \lambda)\sinh(\lambda)\right)\\
%&=  \cosh^{-1}\left(|v|\cosh(\lambda+\varepsilon \theta)-\frac{1}{\sqrt{c}}\tanh(\beta( \lambda+\varepsilon \theta))\sinh(\lambda+\varepsilon \theta)\right)\\
% &\qquad -\cosh^{-1}\left(|v|\cosh(\lambda)-\frac{1}{\sqrt{c}}\tanh(\beta \lambda)\sinh(\lambda)\right) 
%\end{align*}
 \begin{align*}
	\tilde{\Lambda}_0(x)&:=\frac{1}{\varepsilon}\left(\cosh^{-1}\left(|v|\cosh(\lambda+\varepsilon \theta(x))-\frac{1}{\sqrt{c}}\tanh(\beta( \lambda+\varepsilon \theta(x)))\sinh(\lambda+\varepsilon \theta(x))\right) \right)\\
	&\quad -\frac{1}{\varepsilon}\left(\cosh^{-1}\left(|v|\cosh(\lambda)-\frac{1}{\sqrt{c}}\tanh(\beta \lambda)\sinh(\lambda)\right) \right).
\end{align*}
 The dependence associated with $\varepsilon$ for this function, is suitable in the sense that we can demonstrate straightforwardly that $\tilde{\Lambda}_0$ is a bounded function when $\varepsilon$ tends to zero, indeed, we have that the $\lim\limits_{\varepsilon \longrightarrow 0} \tilde{\Lambda}_0$ can be calculated using  L'H\^opital's rule:
\begin{align*}
	\lim\limits_{\varepsilon \longrightarrow 0} \tilde{\Lambda}_0&= \lim\limits_{\varepsilon \longrightarrow 0} \frac{\theta(x)\left(|v|\sinh(\lambda +\varepsilon\theta)-\frac{\beta}{\sqrt{c}}\sech^2(\beta(\lambda +\varepsilon \theta))-\frac{1}{\sqrt{c}}\tanh(\beta(\lambda+\varepsilon \theta))\cosh(\lambda+\varepsilon \theta)\right)}{\sqrt{(|v|\cosh(\lambda+\varepsilon \theta)-\frac{1}{\sqrt{c}}\tanh(\beta( \lambda+\varepsilon \theta))\sinh(\lambda+\varepsilon \theta))^2-1}}\\
	&  =\frac{\theta(x)\left(|v|\sinh(\lambda )-\frac{\beta}{\sqrt{c}}\sech^2(\beta\lambda)-\frac{1}{\sqrt{c}}\tanh(\beta\lambda)\cosh(\lambda)\right)}{\sqrt{(|v|\cosh(\lambda)-\frac{1}{\sqrt{c}}\tanh(\beta \lambda)\sinh(\lambda))^2-1}}=C\theta(x)
\end{align*}
On the other hand, the derivative of $\tilde{\Lambda}$ is given by 
\begin{equation*}
 \partial_t \tilde{\Lambda} = \dfrac{\varepsilon\theta^{\prime}\left(|v|\tanh(\gamma)-\frac{1}{\sqrt{c}}\beta\sech^2(\beta\gamma)\tanh(\gamma)-\frac{1}{\sqrt{c}}\tanh(\beta\gamma)\right)}{\sech(\gamma)\sqrt{\left(|v|\cosh(\gamma)-\frac{1}{\sqrt{c}}\sinh(\gamma)\tanh(\beta\gamma)\right)^2-1}},
 \end{equation*}
in this case $\gamma:= \lambda+\varepsilon\theta,$ then, is clearly that $ \partial_t \tilde{\Lambda}|_{\{t=0\}} \in C_0^{2}(\mathbb{R})$.

\bigskip

 Next, for the field $\hat{\phi}$, we have a bounded function and $\hat{\phi}(t,x) >0$ for all $t,x\in \mathbb{R}$. Again, since $\theta\in C_0^{2}(\mathbb{R})$ we have,
 \begin{equation*}
 	\hat{\phi}(t=0,x)=C_1(\lambda), \quad  \forall x\in \mathbb{R}\setminus \supp \theta,
 \end{equation*}	
 	 and  we can define 
\[
\phi(t,x)=\hat{\phi}-\epsilon \quad \mbox{with} \quad	\epsilon=C_1(\lambda).
\]
With this definition one has:
	\begin{equation*}
		\phi(t=0,x)=\hat{\phi}(t=0,x)-\epsilon,
	\end{equation*}
then
\begin{equation*}
	\phi(t=0,x)=\hat{\phi}(t=0,x)-\epsilon=0 \quad \forall x\in \mathbb{R}\setminus \supp \theta,
\end{equation*}
 if we choose 
 \begin{equation*}
 	\varepsilon\phi_0(x)=\phi(t=0,x)=\hat{\phi}(t=0,x)-\epsilon,
 \end{equation*}
 where $\phi_0$ is given as:
	\begin{align*}
\phi_0(x)&=	-\dfrac{1}{2\varepsilon}\arctan\left(\cosh(\beta(\lambda+\varepsilon\theta(x)))\cosh(\lambda+\varepsilon \theta(x))(\tanh(\beta(\lambda+\varepsilon \theta(x)))+v\sqrt{c}\tanh(\lambda+\varepsilon \theta(x)))\right)\\
& \quad +\dfrac{1}{2\varepsilon}\arctan\left(\cosh(\beta\lambda)\cosh(\lambda)(\tanh(\beta\lambda)+v\sqrt{c}\tanh(\lambda))\right), 	
	\end{align*}
this definition is again, a suitable consideration, we can compute the $\lim\limits_{\varepsilon \longrightarrow 0}\phi_0$ using L'H\^opital's rule:
\begin{align*}
	\lim\limits_{\varepsilon \longrightarrow 0}\phi_0&=\lim\limits_{\varepsilon \longrightarrow 0} \frac{\theta(x)((1+\beta v \sqrt{c})\sinh(\gamma)\sinh(\beta\gamma)+(\beta+v\sqrt{c})\cosh(\gamma)\cosh(\beta\gamma)   }{2(1+(\cosh(\gamma)\sinh(\beta \gamma)+\beta \sqrt{c}\sinh(\gamma)\cosh(\beta\gamma))^2)}\\
	& \quad = C_1\theta(x).
\end{align*} 
The function $\phi$ have the desired local and global well-posedness properties. Indeed, the derivative of this function is:
\begin{equation*}
	\partial_t \phi= \dfrac{-\varepsilon\theta^{\prime}(\beta+v\sqrt{c}+(1+\beta v\sqrt{c})\tanh(\beta\gamma)\tanh(\gamma))}{2\sech(\beta\gamma)\sech(\gamma)(\left(\cosh(\gamma)\sinh(\beta \gamma)+v \sqrt{c}\sinh(\gamma)\cosh(\beta\gamma)\right)^2+1)},
\end{equation*}
which is also a localized function. Finally from the previous analysis, we can conclude that for $\partial_t\Lambda,\partial_t\phi \in L^2(\mathbb{R})$, with $\Lambda(t,x)=\hat{\Lambda}(t,x),$ then 
 \begin{equation*}
	E[\Lambda,\phi] < \infty. 
\end{equation*}
In the end, $\hat{\Lambda}$ reads as
\[
\hat{\Lambda}=\ln(\cosh(\gamma))+ \ln\left(|v|-\frac{\tanh(\gamma)\tanh(\beta\gamma)}{\sqrt{c}}+\sqrt{ \left( |v|-\frac{\tanh(\gamma)\tanh(\beta\gamma)}{\sqrt{c}}\right)^2-1}\right).
\]
This finishes the proof.
\end{proof}		

%Some final comments before the end. In the special case where
%	\begin{equation*}
%		\theta(x):= \left\{ \begin{array}{cc}
%			\exp\left(-\frac{1}{1-|x|^2}\right), & |x|< 1 \\
%			0, & |x|\geq 1,
%		\end{array}\right.
%	\end{equation*}
%one has the following data (see Figure \ref{solutions}):
%\begin{figure}[h]
%	\centering
%	\subfigure[$\tilde{\Lambda}(t=0,x)$]{
%		\includegraphics[width=0.3\textwidth]{Lambda1Tilde.png}}
%	\subfigure[$\phi(t=0,x)$]{
%		\includegraphics[width=0.3\textwidth]{Phi_0.png}}
%	\subfigure[$\partial_t\tilde{\Lambda}|_{\{t=0\}}$]{
%		\includegraphics[width=0.3\textwidth]{Lambda1t.png}}
%	\subfigure[$\partial_t\phi|_{\{t=0\}}$]{
%		\includegraphics[width=0.3\textwidth]{Phi1t.png}}
%	\caption{Initial data.}\label{solutions}
%\end{figure}
% 
%

%%%%%%%%%%%%%%%%%%%%%%%%%%%%%%%%%%%%%%%%%%%%%%%%%%%%%%%%%%%%%%%%%%
		\appendix
	\section{Some useful inequalities}\label{A}
This section start by presenting the well-known  Gronwall's lemma:
	\begin{lem} Let $f: \mathbb{R} \longrightarrow \mathbb{R}$ be a positive continuous function and $g: \mathbb{R} \longrightarrow \mathbb{R}$ be a positive integrable function
		such that
		\begin{equation*}
			f(t)\leq A + \int_{0}^{t} f(s)g(s)ds,
		\end{equation*}
		for some $A\geq 0$ for every $t\in [0,T]$. Then
		
		\begin{equation}
			f(t)\leq A\exp\left(\int_{0}^{t}g(s)ds\right),
		\end{equation}
		for every  $t\in [0,T]$.
	\end{lem}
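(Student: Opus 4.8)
The plan is to convert the integral inequality into a differential one through an auxiliary function and then integrate it with an integrating factor. First I would set $G(t) := A + \int_0^t f(s)g(s)\,ds$, so that the hypothesis becomes simply $f(t) \le G(t)$ for every $t \in [0,T]$. Since $f$ is continuous on the compact interval $[0,T]$ it is bounded there, and $g$ is integrable, so the product $fg$ is integrable; consequently $G$ is absolutely continuous with $G'(t) = f(t)g(t)$ for almost every $t$.

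Next I would exploit the sign of $g$. Because $g \ge 0$ and $f(t) \le G(t)$, almost everywhere one has $G'(t) = f(t)g(t) \le g(t)\,G(t)$, i.e. $G'(t) - g(t)G(t) \le 0$. Introducing the integrating factor $\mu(t) := \exp\!\left(-\int_0^t g(s)\,ds\right)$, which is well defined and strictly positive since $g$ is integrable, the previous inequality is equivalent to $\frac{d}{dt}\!\left(\mu(t)G(t)\right) \le 0$ almost everywhere. As $\mu G$ is itself absolutely continuous, integrating from $0$ to $t$ and using $\mu(0)=1$ and $G(0)=A$ yields $\mu(t)G(t) \le A$, that is, $G(t) \le A\exp\!\left(\int_0^t g(s)\,ds\right)$. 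Combining this with $f(t)\le G(t)$ gives the claimed bound $f(t) \le A\exp\!\left(\int_0^t g(s)\,ds\right)$ for every $t\in[0,T]$.

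The only delicate point I expect is the regularity needed to differentiate $G$ and to apply the fundamental theorem of calculus to $\mu G$, since $g$ is assumed merely integrable rather than continuous. This is handled by the absolute continuity of $G$ (an indefinite Lebesgue integral) together with the product rule for absolutely continuous functions, so the manipulation is rigorous as stated. If one wishes to bypass measure-theoretic technicalities entirely, an alternative route is to iterate the hypothesis directly: substituting the bound for $f$ back into the integral $n$ times produces the partial sums $A\sum_{k=0}^{n}\frac{1}{k!}\!\left(\int_0^t g(s)\,ds\right)^{k}$ plus a remainder that tends to $0$ as $n\to\infty$, recovering the exponential in the limit and giving the same conclusion.
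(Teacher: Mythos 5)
Your proof is correct. Note that the paper itself offers no proof of this lemma at all: it is stated in the appendix as ``the well-known Gronwall's lemma'' and used as a black box (in the continuous-dependence part of Proposition \ref{LOCAL}), so there is no paper argument to compare against. Your integrating-factor argument is the standard proof, and you handle the one genuine subtlety correctly: since $g$ is only assumed integrable, the passage from the integral inequality to the differential inequality and back must go through absolute continuity of $G(t)=A+\int_0^t f(s)g(s)\,ds$ and of $\mu(t)G(t)$, with the product rule and the fundamental theorem of calculus valid almost everywhere, which is exactly what you invoke. Two minor remarks: the strict positivity of $\mu$ follows simply from it being an exponential (integrability of $g$ is only needed for $\mu$ to be finite and absolutely continuous), and the hypothesis that $f$ is positive is never actually needed in your argument --- continuity of $f$ and nonnegativity of $g$ suffice. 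Your alternative Picard-type iteration, bounding the $n$-fold iterated integral by $\frac{1}{(n+1)!}\left(\int_0^t g\right)^{n+1}\sup_{[0,T]}|f|$, is also a valid and fully elementary route to the same conclusion.
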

	The second result to be presented is related to another pointwise bounds that were presented for Luli et. al. in \cite{Luli2018} for the study of the global problem in the Section \refeq{Sect:3}:
	\begin{lem}\label{lemaaux}
	Under the assumption (\refeq{supuesto}) and (\refeq{supuesto1})	exits a universal constant $C_3$ so that
	
	\begin{equation*}
	\begin{aligned}
	&~{} \norm{\frac{\varphi(u)^{1/2}}{\varphi(\underline u)^{1/4}} \underline{L}\tilde{\Lambda}(t,x) }_{L^{\infty}(\Sigma_t)} \\
	&~{} \qquad \leq C_3 \left( \norm{\frac{\varphi(u)^{1/2}}{\varphi(\underline u)^{1/4}} \underline{L}\tilde{\Lambda}(t,x) }_{L^{2}(\Sigma_t)} + \norm{\frac{\varphi(u)^{1/2}}{\varphi(\underline u)^{1/4}}\underline{L}\partial_x \tilde{\Lambda}(t,x)}_{L^{2}(\Sigma_t)}  \right),
	\end{aligned}
	\end{equation*}
	
		\begin{equation*}
		\begin{aligned}
	&~{} \norm{\frac{\varphi(\underline u)^{1/2}}{\varphi(u)^{1/4}} L\tilde{\Lambda}(t,x) }_{L^{\infty}(\Sigma_t)} \\
	&~{} \qquad  \leq  C_3 \left( \norm{\frac{\varphi(\underline u)^{1/2}}{\varphi(u)^{1/4}} L\tilde{\Lambda}(t,x) }_{L^{2}(\Sigma_t)} + \norm{\frac{\varphi(\underline u)^{1/2}}{\varphi(u)^{1/4}}L\partial_x \tilde{\Lambda}(t,x)}_{L^{2}(\Sigma_t)}  \right),
	\end{aligned}
	\end{equation*}
and
	\begin{equation*}
	\begin{aligned}
	&~{}
	\norm{\frac{\varphi(u)^{1/2}}{\varphi(\underline u)^{1/4}} \underline{L}\phi(t,x) }_{L^{\infty}(\Sigma_t)} \\
	&~{} \qquad \leq C_3 \left( \norm{\frac{\varphi(u)^{1/2}}{\varphi(\underline u)^{1/4}} \underline{L}\phi(t,x) }_{L^{2}(\Sigma_t)} + \norm{\frac{\varphi(u)^{1/2}}{\varphi(\underline u)^{1/4}}\underline{L}\partial_x \phi(t,x)}_{L^{2}(\Sigma_t)}  \right),
	\end{aligned}
\end{equation*}

\begin{equation*}
\begin{aligned}
	&~{}
	\norm{\frac{\varphi(\underline u)^{1/2}}{\varphi(u)^{1/4}} L\phi (t,x) }_{L^{\infty}(\Sigma_t)} \\
	&~{} \qquad\leq  C_3 \left( \norm{\frac{\varphi(\underline u)^{1/2}}{\varphi(u)^{1/4}} L\phi(t,x) }_{L^{2}(\Sigma_t)} + \norm{\frac{\varphi(\underline u)^{1/2}}{\varphi(u)^{1/4}}L\partial_x \phi(t,x)}_{L^{2}(\Sigma_t)}  \right).
	\end{aligned}
\end{equation*}
	\end{lem}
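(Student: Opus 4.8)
The four inequalities are all instances of the one–dimensional Sobolev embedding $H^1(\mathbb R)\hookrightarrow L^\infty(\mathbb R)$ applied to a suitably weighted derivative, so the plan is to prove the first one in detail and then indicate how the remaining three follow by the symmetries $u\leftrightarrow\underline{u}$, $L\leftrightarrow\underline{L}$, $\tilde\Lambda\leftrightarrow\phi$. The starting point is the elementary estimate: for every $g\in H^1(\mathbb R)$, writing $g(x)^2=2\int_{-\infty}^x g\,\partial_x g$, one has
\[
\norm{g}_{L^\infty(\mathbb R)}^2\le 2\norm{g}_{L^2(\mathbb R)}\norm{\partial_x g}_{L^2(\mathbb R)}\le \big(\norm{g}_{L^2(\mathbb R)}+\norm{\partial_x g}_{L^2(\mathbb R)}\big)^2,
\]
hence $\norm{g}_{L^\infty(\mathbb R)}\le \norm{g}_{L^2(\mathbb R)}+\norm{\partial_x g}_{L^2(\mathbb R)}$.

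I would apply this on the slice $S_t$ with $g(x):=w(x)\,\underline{L}\tilde\Lambda(t,x)$, where $w:=\varphi(\underline{u})^{1/2}\varphi(u)^{-1/4}$ and, at fixed $t$, both $u=(t+x)/2$ and $\underline{u}=(t-x)/2$ are regarded as functions of $x$. Since $\underline{L}=\partial_t-\partial_x$ commutes with $\partial_x$, differentiating gives
\[
\partial_x g=(\partial_x w)\,\underline{L}\tilde\Lambda+w\,\underline{L}\partial_x\tilde\Lambda,
\]
so the second summand is exactly the weighted quantity in the second term on the right–hand side of the claimed bound. The \emph{key and essentially only} point is that the weight has bounded logarithmic derivative: from $\varphi(y)=(1+y^2)^{1+\delta}$ one has $\varphi'(y)/\varphi(y)=2(1+\delta)y/(1+y^2)$, so $\abs{\varphi'(y)/\varphi(y)}\le 1+\delta\le 2$ uniformly in $y$ (using $2\abs{y}\le 1+y^2$). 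With $\partial_x u=\tfrac12$ and $\partial_x\underline{u}=-\tfrac12$ this yields
\[
\frac{\partial_x w}{w}=-\frac14\frac{\varphi'(\underline{u})}{\varphi(\underline{u})}-\frac18\frac{\varphi'(u)}{\varphi(u)},
\]
whence $\abs{\partial_x w}\le \tfrac34\,w$ pointwise, with a constant depending only on $\delta\in(0,1)$. Therefore $\abs{(\partial_x w)\underline{L}\tilde\Lambda}\le\tfrac34\abs{g}$, and
\[
\norm{\partial_x g}_{L^2(S_t)}\le \tfrac34\norm{g}_{L^2(S_t)}+\norm{w\,\underline{L}\partial_x\tilde\Lambda}_{L^2(S_t)}.
\]
Inserting this together with $\norm{g}_{L^2(S_t)}$ into the Sobolev bound produces the first inequality with a universal constant $C_3$.

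The assumptions \eqref{supuesto}–\eqref{supuesto1} enter only to guarantee that the right–hand sides are finite: they are precisely the weighted $L^2$ norms $\mathcal E_k(t)$, $\overline{\mathcal E}_k(t)$ of \eqref{energies}, so the Sobolev argument is applied to genuine $H^1$ functions, while the inequality itself is purely functional–analytic and its constant is independent of $t$ because the log–derivative bound on $w$ is uniform in $(t,x)$. The three remaining estimates follow verbatim after interchanging the roles of $u$ and $\underline{u}$, which replaces $w$ by $\varphi(u)^{1/2}\varphi(\underline{u})^{-1/4}$ and $\underline{L}$ by $L=\partial_t+\partial_x$ (again commuting with $\partial_x$, with logarithmic derivative $\partial_x w/w=\tfrac14\varphi'(u)/\varphi(u)+\tfrac18\varphi'(\underline{u})/\varphi(\underline{u})$ still bounded by $\tfrac34$), and, for the $\phi$–estimates, replacing $\tilde\Lambda$ by $\phi$. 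I do not expect a genuine obstacle: the only thing to watch is the bookkeeping of weight exponents so that a derivative falling on the weight reproduces a constant multiple of the weight rather than a worse power, which is exactly what the computation of $\partial_x w/w$ confirms.
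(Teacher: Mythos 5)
Your proof is correct, and it is essentially the argument behind the lemma as it appears in the literature: the paper itself states this result without proof, quoting it from Luli--Yang--Yu \cite{Luli2018}, whose proof is exactly your weighted one-dimensional Sobolev embedding combined with the observation that the weight $w=\varphi(\underline{u})^{1/2}\varphi(u)^{-1/4}$ has bounded logarithmic derivative, so that $\abs{\partial_x w}\lesssim w$ and the derivative term reproduces the two weighted $L^2$ norms on the right-hand side. Your bookkeeping (the commutation of $L,\underline{L}$ with $\partial_x$, the bound $\abs{\varphi'/\varphi}\le 1+\delta$, and the symmetry reductions to the other three estimates) is accurate, so there is nothing to correct.
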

%%%%%%%%%%%

{\color{blue} \section{Ending of proof of Theorem \ref{GLOBAL0}}\label{B}
	In this section, we describe the details of the estimates for the second equation in \eqref{problema2} that complete the proof of Theorem \ref{GLOBAL0}.  
	%\subsection{Ending of the Theorem \ref{GLOBAL0}} 
	
	\medskip
	
	For simplicity, in Section \ref{Sect:3} we worked  with the first equation of system \eqref{problema2}. Now we prove the estimates for the second equation. 
	\begin{proof}  The first step is the following:  Using (\refeq{NC22}) and (\refeq{derivadanullC}) in the second equation of (\refeq{problema2}), we obtain:
	\begin{equation}\label{derivadas1}
		 \square \left( \partial_x \phi\right)  = 2\Big[\coth(\lambda +\tilde{\Lambda})\left(Q_0(\partial_x\phi,\tilde \Lambda)+Q_0(\phi,\partial_x\tilde \Lambda)\right)-2\partial_x\tilde{\Lambda}\csch^2(\lambda+\tilde{\Lambda})Q_0(\phi,\tilde \Lambda)\Big].
	\end{equation}
As in Section \ref{Sect:3}, fix $\delta \in (0,1),$ under the assumptions (\refeq{supuesto})-(\refeq{supuesto1})-(\refeq{condicionlambda}) for all $t\in [0,T^{*}]$, we assume that the solution remains regular, to later show that these bounds are maintained, with a better constant. 

\medskip

Consider $k=0,1$. Using \eqref{EEnergia} on \eqref{derivadas}, with $\psi=\partial^k_x \phi$, and taking the sum over $k=0,1$, we obtain
\begin{equation}\label{EE3}
	\begin{aligned}
		&\overline{\mathcal{E}}(t)+\overline{\mathcal{F}}(t) \leq 2C_0 \overline{\mathcal{E}}(0)\\
		&\quad +2C_0  \iint_{D_t} \left( (1+\left|u \right|^2)^{1+\delta} | \underline{L}\phi|+ (1+\left|\underline{u} \right|^2)^{1+\delta}|L \phi|\right)2 |\coth(\lambda +\tilde{\Lambda})| |Q_0(\phi,\tilde \Lambda)| \\
		&\quad        +4C_0\iint_{D_t} \left( (1+\left|u \right|^2)^{1+\delta} | \underline{L}\partial_x\phi|+ (1+\left|\underline u \right|^2)^{1+\delta}| L\partial_x \phi|\right) |\coth(\lambda+\tilde{\Lambda})||(Q_0(\partial_x\phi, \tilde \lambda)+ Q_0(\phi,\partial_x\tilde \Lambda) )| \\
				& \quad +4C_0 \iint_{D_t} \left( (1+\left|u \right|^2)^{1+\delta} | \underline{L}\partial_x\phi|+ (1+\left|\underline u \right|^2)^{1+\delta}| L\partial_x \phi|\right) |\partial_x\tilde{\Lambda} \csch^2(\lambda +\tilde{\Lambda})||Q_0(\phi,\tilde \Lambda)| \\
		& =: 	2C_0\overline{\mathcal{E}(0)}+ 2C_0\sum_{j=1}^8 I_j,
	\end{aligned}
\end{equation}
 In this case, the integrals $I_j, i\in \{1,2,..,8\}$ are defined as follows: 
	
\begin{equation}
\begin{aligned}
& I_1:= 2C_0  \iint_{D_t} \left( (1+\left|u \right|^2)^{1+\delta} | \underline{L}\phi|\right) |\coth(\lambda +\tilde{\Lambda})| |Q_0(\phi,\tilde \Lambda)| \\
& I_2:= 2C_0 \iint_{D_t} \left( (1+\left|\underline{u} \right|^2)^{1+\delta}|L \phi|\right)|\coth(\lambda +\tilde{\Lambda})| |Q_0(\phi,\tilde \Lambda)| \\
& I_3: = 2C_0\iint_{D_t} \left( (1+\left|u \right|^2)^{1+\delta} | \underline{L}\partial_x\phi|\right)  |\coth(\lambda+\tilde{\Lambda})||(Q_0(\partial_x\phi, \tilde \Lambda)|\\
& I_4:= 2C_0\iint_{D_t} \left( (1+\left|u \right|^2)^{1+\delta} | \underline{L}\partial_x\phi|\right)  |\coth(\lambda+\tilde{\Lambda})||Q_0(\phi,\partial_x\tilde \Lambda) )| \\
& I_5: = 2C_0\iint_{D_t} \left( (1+\left|\underline u \right|^2)^{1+\delta} L\partial_x\phi|\right)  |\coth(\lambda+\tilde{\Lambda})||(Q_0(\partial_x\phi, \tilde \Lambda)|\\
& I_6:= 2C_0\iint_{D_t} \left( (1+\left|\underline u \right|^2)^{1+\delta} | L\partial_x\phi|\right)  |\coth(\lambda+\tilde{\Lambda})||Q_0(\phi,\partial_x\tilde \Lambda) )| \\
& I_7:= 2C_0  \iint_{D_t} \left( (1+\left|u \right|^2)^{1+\delta} | \underline{L}\partial_x\phi|\right) |\partial_x\tilde{\Lambda} \csch^2(\lambda +\tilde{\Lambda})||Q_0(\phi,\tilde \Lambda)| \\
& I_8:= 2C_0  \iint_{D_t} \left( (1+\left|\underline u \right|^2)^{1+\delta}| L\partial_x \phi|\right) |\partial_x\tilde{\Lambda} \csch^2(\lambda +\tilde{\Lambda})||Q_0(\phi,\tilde \Lambda)|.
\end{aligned}
\end{equation}
The goal is to control the right-hand side of the above estimate. Essentially we have eight terms to control, but several are equivalent  and we only need to consider essentially two cases. Indeed, it will be sufficient to bound the terms corresponding to $\underline L \partial_x \phi$ and $\underline L \phi,$ since by symmetry, the procedure for the other terms will be analogous. First, we start to bound the  term $I_7,$ that represents the most attention, given that it has different sub-terms to estimate, recalling that we define $\varphi (x)= (1+|x|^2)^{1+\delta},$ with $0<\delta \ll 1.$\\

Taking into account \eqref{importante}, \eqref{condicionlambda} and \eqref{expansion1}-\eqref{cotas}, and writing $\partial_x \tilde \Lambda= \frac{1}{2}(L-\underline L )\tilde \Lambda $, we get
\begin{equation}\label{I7}
\begin{aligned}
I_7 \lesssim &~{} C_0\iint_{D_t} \varphi(u) |\underline L \partial_x \phi| |L \tilde \Lambda||L \phi| |\underline L \tilde \Lambda| + C_0\iint_{D_t} \varphi(u) |\underline L \partial_x \phi| |L \tilde \Lambda|^2|\underline L \phi|  \\
& +C_0\iint_{D_t} \varphi(u) |\underline L \partial_x \phi| |L \tilde \Lambda|^2|L \phi| + C_0\iint_{D_t} \varphi(u) |\underline L \partial_x \phi| |\underline L \tilde \Lambda||\underline L \phi| | L \tilde \Lambda| \\
:= & ~{} I_{7,1}+ I_{7,2}+I_{7,3}+I_{7,4}.
\end{aligned}
\end{equation}
Recall that by Fubini's Theorem the spacetime $D_t$ in \eqref{D_t} is foliated by $\underline C_{\underline u}$ for $u\in \mathbb{R},$ and also by $\{t\}\times \Sigma_t, t\in \mathbb{R}.$ Using again the Lemma \ref{lema1}, we obtain
\begin{align*}
	  I_{7,1} \lesssim & \iint_{D_t} \varepsilon \underbrace{(\varphi(\underline u)^{-3/4}\varphi(u)^{1/2}|\underline{L}\partial_x \phi|)}_{L_t^2L_x^2}\underbrace{(\varphi^{1/2}(\underline u)|L\tilde \Lambda|)}_{L_t^{\infty}L_x^2}\underbrace{(\varphi(\underline u)^{-1/4}\varphi (u)^{1/2}|\underline{L}\tilde \Lambda|)}_{L_t^2 L_x^{\infty}}\\
	& \lesssim \varepsilon \underbrace{\left(\iint_{D_t} \dfrac{\varphi (u)|\underline{L}\partial_x \phi|^2}{\varphi(\underline u)^{3/2}} \right)^{1/2}}_{T_1} 
	\underbrace{\sup_{t\in [0,T^*]}\left(\int_{\Sigma_t}\varphi(\underline u)|L \tilde \Lambda|^2 \right)^{1/2}}_{T_2} 
	\underbrace{\left(  \int_{0}^{t} \norm{ \dfrac{\varphi (u)^{1/2}}{\varphi(\underline u)^{1/4}} |\underline{L}\tilde \Lambda|}_{L^{\infty}(\Sigma_{\tau})}^{2} d\tau  \right)^{1/2}.}_{T_3}	
\end{align*}
Let us study each of the factors $T_j$. For $T_1$, one has:
\begin{align*}
	T_1^2 \leq & \int_{\mathbb{R}}\left[\int_{\underline{C}_{\underline u}} \dfrac{\varphi (u)|\underline{L}\partial_x\phi|^2}{\varphi(\underline u)^{3/2}} ds\right]d\underline{u} = \int_{\mathbb{R}} \dfrac{1}{\varphi  (\underline u)^{3/2}}\underbrace{\left[\int_{\underline{C}_{\underline u}} \varphi (u)|\underline{L}\partial_x \phi|^2ds \right]}_{\lesssim \overline{\mathcal{F}}_1(t)} d\underline{u}
	\lesssim  \int_{\mathbb{R}} \dfrac{\varepsilon^2}{\varphi  (\underline u)^{3/2}}d\underline{u},
\end{align*} 
since the integral is finite, we have
$T_1 \lesssim \varepsilon.$
 The integral $T_2$ is part of the energy norm  $\mathcal{E}_0(t)$ in \eqref{energies} then $T_2 \lesssim \varepsilon.$
For the term $T_3$ one can use the same argument as in \cite{Luli2018}: using Lemma \refeq{lemaaux} one gets
\begin{align*}
	T_3 &\lesssim  \left(\int_0^t  \norm{\frac{\varphi(u)^{1/2}}{\varphi(\underline u)^{1/4}} \underline{L}\tilde \Lambda (t,x) }^2_{L^{2}(\Sigma_{\tau})} +\int_0^t  \norm{\frac{\varphi(u)^{1/2}}{\varphi(\underline u)^{1/4}} \underline{L}\partial_x\tilde \Lambda(t,x) }_{L^{2}(\Sigma_{\tau})}^2  \right)^{1/2} \\
	&\lesssim \left( \iint_{D_t} \frac{\varphi(u)}{\varphi(\underline u)^{1/2}}|\underline{L}\tilde \Lambda|^2 + \iint_{D_t} \frac{\varphi(u)}{\varphi(\underline u)^{1/2}}|\underline{L}\partial_x \tilde \Lambda|^2  \right)^{1/2},
\end{align*}
both terms above are of the same form as $T_1$
 and then we have that $T_3 \lesssim \varepsilon.$ We conclude  that $  I_{7,1} \lesssim  \varepsilon^4.$ 
 \medskip
 Now we control the integral $I_{7,2}$ in \eqref{I7}, using again the Lemma \ref{lema1}, the assumption \eqref{supuesto1} and Cauchy–Schwarz inequality. We have:
 \begin{align*}
 I_{7,2}=& C_0\iint_{D_t} \varphi(u) |\underline L \partial_x \phi| |L \tilde \Lambda|^2|\underline L \phi|  \leq \iint_{D_t}C_2 \varepsilon^2 \frac{\varphi(u)^{1/2}}{\varphi(\underline u)^{1/2}}|\underline L \partial_x \phi| \frac{\varphi(u)^{1/2}}{\varphi(\underline u)^{1/2}}|\underline L \phi|\\
 \leq & C_2\varepsilon^2  \left(  \iint_{D_t}  \frac{\varphi(u)}{\varphi(\underline u)}|\underline L \partial_x \phi|^2  \right)^{1/2} \left(  \iint_{D_t}  \frac{\varphi(u)}{\varphi(\underline u)}|\underline L \phi|^2  \right)^{1/2} \lesssim \varepsilon^4.
 \end{align*}
 To finish with the term $I_7$ we need to estimate the terms $I_{7,3}$ and $I_{7,4}$ in \eqref{I7}, which are similar in structure, for this case we get:
 
 \begin{align*}
 I_{7,34}= & ~{} I_{7,3}+I_{7,4}\lesssim   \iint_{D_t} \varepsilon^2 |\underline{L}\partial_x \phi||L\phi|+  \iint_{D_t} \varepsilon^2 |\underline{L}\partial_x \phi||L\tilde \Lambda|\\
	=& \iint_{D_t} \varepsilon^2 \dfrac{\varphi (u)^{1/2}}{\varphi(\underline u)^{1/2}}|\underline{L}\partial_x\phi |\left( \dfrac{\varphi(\underline u)^{1/2}}{\varphi (u)^{1/2}}|L\phi| + \dfrac{\varphi(\underline u)^{1/2}}{\varphi (u)^{1/2}}|L\tilde \Lambda|\right)\\
	\lesssim & \iint_{D_t} \varepsilon^2 \left( \dfrac{\varphi (u)}{\varphi(\underline u)}|\underline{L}\partial_x\phi|^2 \right) +  \iint_{D_t} \varepsilon^2\left(\dfrac{\varphi(\underline u)}{\varphi (u)}|L\phi|^2 \right) +  \iint_{D_t} \varepsilon^2\left(\dfrac{\varphi(\underline u)}{\varphi (u)}|L\tilde \Lambda|^2 \right)\\
	\lesssim & \int_{\mathbb{R}} \dfrac{\varepsilon^2}{\varphi(\underline u)}\underbrace{\left[ \int_{\underline{C}_{\underline u}} \varphi (u) |\underline{L}\partial_x\phi |^2ds \right]}_{\lesssim \overline{\mathcal{F}}_1(t)} d\underline{u} + \int_{\mathbb{R}} \dfrac{\varepsilon^2}{\varphi(u)}\underbrace{\left[ \int_{C_{u}} \varphi (\underline u) |L\phi|^2ds \right]}_{\lesssim \overline{\mathcal{F}}_0(t)} du\\
	& +  \int_{\mathbb{R}} \dfrac{\varepsilon^2}{\varphi(u)}\underbrace{\left[ \int_{C_{u}} \varphi (\underline u) |L\tilde \Lambda|^2ds \right]}_{\lesssim \mathcal{F}_0(t)} du \lesssim \varepsilon^4.
 \end{align*}
 
Putting all estimates together for $I_7,$ we conclude that $I_7 \lesssim \varepsilon^4.$ A similar result is obatained for $I_8.$

\medskip Now we treat the term $I_1+I_3+I_4$ from \eqref{EE3}. We have from \eqref{importante} and  \eqref{expansion1}-\eqref{cotas},

\begin{align*}
& \iint _{D_t} \varphi(u ) | \underline{L}\partial_x\phi| \left(  |L\partial_x\phi ||\underline{L}\tilde \Lambda|+ |\underline L \partial_x \phi| |L\tilde \Lambda| + |L\phi||\underline L \partial_x \tilde \Lambda |+ |\underline L \phi||L \partial_x \tilde \Lambda |\right)\\
 &\quad \quad \quad +\iint_{D_t} \left( \varphi(u)| \underline{L}\phi|\right)\left( |L\phi ||\underline{L}\tilde \Lambda|+ |L\tilde \Lambda| |\underline{L}\phi|\right).
\end{align*}
Using the condition (\refeq{condicionlambda}), the situation matches Case 1 developed in \cite{Luli2018}.  All these integrals can be written as
\begin{align*}
	\sim  \iint_{D_t}  \left(\varphi (u) |\underline{L}\partial_x \tilde{\Lambda}||L\phi||\underline{L}\partial_x \phi| + \varphi (u) |\underline{L}\partial_x \tilde{\Lambda}||L\partial_x \phi| |\underline{L}\phi|\right).
\end{align*}
Then, we can use the estimate \eqref{bounds} in Section \ref{Sect:3} to conclude the bounds on these terms, which again are of order $\varepsilon^3.$

\end{proof}
%%%%%%%%%%%%%%%%%%%%%%
\section{Classical Solution: Local Theory}\label{C}
As we can see, Proposition \ref{LOCAL} does not directly provide us with a classical solution for the initial value problem \eqref{cauchy2}. In order to obtain such a classical solution, we need an initial data with sufficient regularity, which allows us to control the terms associated with the nonlinearity. The idea of the proof still has the same structure.\\

Recall that the initial value problem for (\refeq{cauchy2}) can be written in vector form as follows
	 \begin{equation}
	 	\begin{cases*}
	 		\partial_{\alpha} (m^{\alpha \beta}\partial_{\beta}\Psi)=F(\Psi,\partial \Psi)\\
	 		(\Psi,\partial_t \Psi)|_{\{t=0\}}=(\Psi_0, \Psi_1) \in  \mathcal{\hat H}.
	 	\end{cases*}\label{IVPC}
	 \end{equation}
	 Where $m^{\alpha \beta}$ are the components of the Minkowski metric with $\alpha, \beta \in \left\{0,1\right\}$, and 
	 \begin{equation}\label{Hcal}
	(\Psi,\partial_t \Psi)\in \mathcal{\hat H}:=H^3(\mathbb{R})\times H^{3}(\mathbb{R}) \times H^2(\mathbb{R}) \times H^2(\mathbb{R}).
	 \end{equation}
	 %Notice that from \eqref{tilda}, $\Lambda\in \dot H^1$. 
	  We are also going to impose the following condition on the initial data  
	 \begin{equation}\label{condicioninicial}
	 	\norm{\left(\Psi_0,\Psi_1\right)}_{\mathcal{\hat H}} \leq \dfrac{\lambda}{2D},
	 \end{equation}
	 where the assumptions on the constant $D\geq 1$ will be indicated below. 	 
	 \medskip
	 
	 The following proposition shows that the equation (\refeq{IVPC}), in terms of the function $\tilde{\Lambda}$ introduced in \eqref{tilda}, is locally well-posed in the space $L^{\infty}([0,T]; \mathcal{\hat H})$ with the norm in this space defined by 
	 	\begin{equation*}
	 		\norm{ (\Psi,\partial_t \Psi) }_{L^{\infty}([0,T]; \mathcal{H})} = \sup_{t\in [0,T]} \left( \norm{ \Psi }_{H^3(\mathbb{R})\times H^{3}(\mathbb{R})}+ \norm{ \partial_t \Psi }_{H^2(\mathbb{R})\times H^2(\mathbb{R})} \right),
	 	\end{equation*} 
with $(\Psi,\partial_t \Psi)$ introduced in \eqref{notation}. The result is the following.
	 
	 \begin{prop}\label{LOCAL1}
	 	If $(\Psi_0, \Psi_{1})$ satisfies the condition  (\ref{condicioninicial}) with an appropriate constant $D\geq1$, then: 
	 	\begin{itemize}
	 		\item[(1)] (Existence and uniqueness of local-in-time solutions). There exists 
	 		\[ 
			T=T\left( \norm{ \left( \tilde{\Lambda}_0, \phi_0 \right) }_{H^3(\mathbb{R}) \times H^3(\mathbb{R})}, \norm{ \left( \tilde{\Lambda}_1, \phi_1 \right) }_{H^2(\mathbb{R}) \times H^2(\mathbb{R})},\lambda \right) > 0,\]
	 		such that there exists a (classical) solution $\Psi$ to  (\refeq{IVPC}) with 
	 		\begin{equation*}
	 			(\Psi,\partial_t \Psi)\in L^{\infty}([0,T];\mathcal{\hat H}).
	 		\end{equation*}
	 		Moreover, the solution is unique in this function space.
			
			\medskip
			
	 		\item[(2)] (Continuous dependence on the initial data). Let $\Psi_{0}^{(i)}, \Psi_{1}^{(i)}$ be sequence such that $\Psi_{0}^{(i)} \longrightarrow \Psi_{0}$ in $H^3(\mathbb{R})\times H^{3}(\mathbb{R})$ and $\Psi_{1}^{(i)} \longrightarrow \Psi_{1}$ in $H^2(\mathbb{R})\times H^2(\mathbb{R})$ as $i \longrightarrow \infty.$ Then taking $T>0$ sufficiently small, we have 
	 		\begin{equation*}
	 			\norm{ \left(\Psi^{(i)}-\Psi, \partial_t(\Psi^{(i)}-\Psi) \right) }_{L^{\infty}([0,T]; H^s(\mathbb R)\times  H^{s}(\mathbb R) )\times  L^{\infty}([0,T]; H^{s-1}(\mathbb R) \times  H^{s-1}(\mathbb R))} \longrightarrow 0.
	 		\end{equation*}	 
	 		as $i \longrightarrow \infty$ for every $1\leq s < 3.$ Here $\Psi$ is the solution arising from data $(\Psi_0,\Psi_1)$ and $\Psi^{(i)}$ is the solution arising from data $\left(\Psi_0^{(i)},\Psi_1^{(i)} \right).$
	 	\end{itemize}	 	
	 \end{prop}

 \begin{proof}[Proof of Proposition \ref{LOCAL1}] 
	 	(1). This part of the Proposition is proved by Picard's iteration. Using a density argument it is sufficient to assume the initial data $(\Psi_0,\Psi_1)\in \mathcal{S}^4$ ($\mathcal S$ being the Schwartz class), along with condition (\refeq{condicioninicial}). Define a sequence of smooth functions $\Psi^{(i)},$ with $i \geq 1$ such that 
		\[
		\Psi^{(1)}=(0,0),
	 	\]
	 	and for $i \geq 2,$ $\Psi^{(i)}$ is iteratively defined as the unique solution to the system
	 	\begin{equation}
	 		\begin{cases*}
	 			\partial_{\alpha} (m^{\alpha \beta}\partial_{\beta}\Psi^{(i)})=F(\Psi^{(i-1)},\partial \Psi^{(i-1)})\\
	 			(\Psi^{(i)},\partial_t \Psi^{(i)})|_{\{t=0\}}=(\Psi_0, \Psi_1) \in  \mathcal{H}.
	 		\end{cases*}\label{IVP1Ci}
	 	\end{equation}
		 It is important to note that from \eqref{notation} and \eqref{condicioninicial} we can assure that for $j=1,2,$
	\begin{equation}\label{condicion2f}
		\sum_{ \gamma =0}^{2} \sup_{|x|,|p|\leq \frac{\lambda}{2}}|\partial^{\gamma}_{x,p} F_j|(x,p) \leq C_{j,\frac12\lambda}.
	\end{equation} 
	Indeed, this can be seen from the fact that for $(x,p)=(x_1,x_2,p_1,p_2,p_3,p_4)$ and $|x|\leq \frac{\lambda}2,$
	\[
	F_1(x,p)= 2\sinh(2\lambda +2x_1)\left(p_4^2-p_3^2\right),\quad F_2(x,p)= \dfrac{\sinh(2(\lambda +x_1))}{\sinh^2(\lambda +x_1)} \left(p_3 p_1 - p_2p_4 \right).
	\]
	Define bounded functions in the class $C^1$.
	
	\medskip
	
	 	It is important to note that condition (\refeq{condicion2f}) allows this iterative definition of the functions $\Psi^{(i)}$ to be possible, since it maintains each component of $F$ with the required regularity, see \cite{sogge}. First, it will be shown that for a sufficiently small $T>0,$ the sequence $(\Psi,\partial_t \Psi)$ is uniformly (in $i$) bounded in $L^{\infty}([0,T]; \mathcal{\hat H})$, then it will be shown that it is also a Cauchy sequence. For the first part, the idea is to use the energy estimates (\refeq{Eenergia}), we want to prove that there is a constant $ 0 < A \leq \frac{\lambda}{2}$ such that 
	 	\begin{equation}\label{HI1}
	 		\norm{ \left( \Psi^{(i-1)},\partial_t \Psi^{(i-1)} \right) }_{L^{\infty}([0,T];\mathcal{\hat H})} \leq A,	
	 	\end{equation}
	 	implies that 
	 	\begin{equation*}
	 		\norm{ \left(\Psi^{(i)},\partial_t \Psi^{(i)} \right) }_{L^{\infty}([0,T];\mathcal{\hat H})} \leq A.	
	 	\end{equation*} 
	 	The energy estimation (\refeq{Eenergia}) allows us to write for (\refeq{IVPC}) the following estimate:  
	 	\begin{equation}
	 		\begin{aligned}
	 			 \sup_{t\in [0,T]} \norm{\left(\Psi^{(i)},\partial_t \Psi^{(i)}\right)}_{\mathcal{\hat H}} 
	 			 	&~{}\leq  C(1+T)(\norm{\left(\Psi_0, \Psi_1\right)}_{\mathcal{\hat H}})  \\
	 			 +C(1+T)	&~{} \int_{0}^{T} \left(\norm{ F_1\left(\Psi^{(i-1)},\partial \Psi^{(i-1)}\right)}_{H^2(\mathbb{R})}+\norm{ F_2\left( \Psi^{(i-1)},\partial \Psi^{(i-1)} \right)}_{H^2(\mathbb{R})}\right)(t)dt.
	 		\end{aligned} 
	 	\end{equation}
	 	With this estimate, our goal is to bound the integral on the right hand side of the inequality above. That is,  we want to prove that there exists $B=B(A,F)>0$ such that for $t\in [0,T],$  we have 
		\begin{equation}
		\sum_{n=0}^{2} ||\partial_x^nF(\Psi^{(i-1)},\partial \Psi^{(i-1)})||_{L^2}(t) \leq B.
		\end{equation}
		For this, we will use the conditions (\refeq{condicioninicial}) for each $F_j$ which is satisfied by the hypothesis in  (\refeq{HI1}), if $B_1=\max \{C_{1,\frac{\lambda}{2}}, C_{2,\frac{\lambda}{2}} \},$ and  using  chain rule we get 		
		\begin{equation*}
		\begin{aligned}
		\sum_{n=0}^{2} ||\partial_x^nF(\Psi^{(i-1)},\partial \Psi^{(i-1)})||_{L^2} \leq &~{} B_1 + B_1||\partial_x \Psi^{i-1}||_{L^2}+ B_1||\partial \partial_x \Psi^{i-1}||_{L^2}+ B_1||\partial \Psi^{i-1}||^2_{H^2}\\
		& + B_1||\partial_x^2 \Psi^{i-1}||_{L^2}+ B_1||\partial \partial_x \Psi^{i-1}\cdot \partial \partial_x \Psi^{i-1} ||_{L^2}+ ||\partial \partial_x^2 \Psi^{i-1}||_{L^2} \\
		\leq & ~{} B,
		\end{aligned}
		\end{equation*}
		where $B=B(B_1,A,\lambda)$, which results in the following estimate
	 	\begin{equation}
	 		\begin{aligned}
	 			\sup_{t\in [0,T]} \norm{ \left(\Psi^{(i)},\partial_t \Psi^{(i)} \right)}_{\mathcal{\hat H}} \leq  C(1+T)\left( \norm{ \left(\Psi_0, \Psi_1 \right)}_{\mathcal{\hat H}}+2BT \right),
	 		\end{aligned} 
	 	\end{equation}
	 	we can choose $T> 0$  sufficiently small such that 
	 	\begin{equation*}
	 		2BT \leq \norm{\left(\Psi_0,\Psi_1\right)}_{\mathcal{\hat H}},
	 	\end{equation*} 
	 	so
	 	\begin{equation*}
	 	\norm{ \left(\Psi^{(i)},\partial_t \Psi^{(i)}\right)}_{L^{\infty}([0,T]; \mathcal{\hat H})} \leq 2C \norm{ (\Psi_0,\Psi_1) }_{\mathcal{\hat H}}.
	 	\end{equation*}
	 	If we choose $D > 4C$ in (\refeq{condicioninicial})  and  $A:= 2C||(\Psi_0,\Psi_1)||_{\mathcal{\hat H}}\leq \frac{2C\lambda}{D} \leq \frac{\lambda}{2}.$ We have thus shown the desired implication.
		\\
		
	 	In the Section \ref{Sect:2} we showed that the last sequence is of Cauchy type in the larger space $L^{\infty}([0,T]; \mathcal{H})$. Therefore, the sequence is Cauchy on $L^{\infty}([0,T]; \mathcal H),$ and hence convergent.  That is, there exists $(\Psi, \partial_t \Psi)$ in $L^{\infty}([0,T]; \mathcal H)$. The uniform bounds  (on $i$) in $L^{\infty}([0,T], \mathcal{\hat H})$  guarantees that the limit in fact lies in the smaller space $L^{\infty}([0,T], \mathcal{\hat H}),$  that is, for almost $t\in [0,T],$  $(\Psi^{(i)}, \partial_t \Psi ^{(i)})(t)\in \mathcal{\hat H}$, uniform in $i$, and therefore by Banach-Alaoglu's Theorem, there is a weak limit in  $ \mathcal{\hat H}$ (up to a subsequence). But  the uniqueness of the limit ensures that this limit must agree with  $(\Psi, \partial_t \Psi)(t).$ This concludes the proof of existence.
		
	 	\medskip
		
	 	Finally, for the continuous dependence on initial data, we prove in the Section \ref{Sect:2} that taking $i \longrightarrow \infty,$ we get
	 	\begin{equation*}
	 		\sup_{s\in [0,t]} \norm{ \left(\Psi^{(i)}-\Psi, \partial_t \Psi^{(i)}-\partial_t \Psi \right)}_{{\color{blue}\mathcal{H}}} \longrightarrow 0. 
	 	\end{equation*}	
		To obtain the result in general for $1 \leq s < 3 ,$ simply observe that 
		\begin{equation*}
		\begin{aligned}
		&\sup_{t\in [0,T]} ||(\Psi^{(i)}-\Psi,\partial_t \Psi^{(i)}-\partial_t \Psi)||_{H^s\times H^{s}\times H^{s-1}\times H^{s-1}}(t)  \\
		& \quad \leq C\sup_{t\in [0,T]}\left( ||(\Psi^{(i)}-\Psi,\partial_t \Psi^{(i)}-\partial_t \Psi)||_{H^1\times H^{1}\times L^{2}\times L^{2}}(t) \right)^{\frac{3-s}{2}}\\
		& \quad \quad \times \left( ||(\Psi^{(i)}-\Psi,\partial_t \Psi^{(i)}-\partial_t \Psi)||_{H^3\times H^{3}\times H^{1}\times H^{1}}(t) \right)^{\frac{s-1}{2}} \longrightarrow 0.
		\end{aligned}
		\end{equation*}
		This last property ends the proof of Proposition \ref{LOCAL}. 
	 \end{proof}
}
	\bigskip
	
	%%%%%%%
	\bibliographystyle{unsrt}
	%\bibliography{bibfile}

\begin{thebibliography}{99}
		
		\bibitem{abbrescia2019geometric}
		{\sc L. Abbrescia,  and W. Wong}, {\em Geometric analysis of 1+ 1 dimensional quasilinear wave equations},
		arXiv preprint arXiv:1912.04692, 2019.
			
		\bibitem{Alejo2018}
		{\sc M.~A. Alejo and C.~Mu{\~{n}}oz}, {\em Almost sharp nonlinear scattering in
			one-dimensional born-infeld equations arising in nonlinear electrodynamics},
		Proceedings of the American Mathematical Society, 146 (2018), pp.~2225--2237.
		
		\bibitem{alinhac2001null}
		{\sc S.~Alinhac}, {\em The null condition for quasilinear wave equations in two space dimensions}, Inventiones Mathematicae vol. 145, pp. 597--618 (2001).
		
		\bibitem{alinhac2009hyperbolic}
		{\sc S. ALinhac}, {\em Hyperbolic partial differential equations}, Universitext. Springer, Dordrecht, 2009. xii+150 pp. ISBN: 978-0-387-87822-5.
		%Springer Science \& Business Media, 2009.
		
		\bibitem{beggs1990solitons}
		{\sc E.~J. Beggs}, {\em Solitons in the chiral equation}, Communications in
		Mathematical Physics, 128 (1990), pp.~131--139.
		
		\bibitem{belinski1979stationary}
		{\sc V.~Belinski and V.~Sakharov}, {\em Stationary gravitational solitons
			with axial symmetry}, Soviet Journal of Experimental and Theoretical Physics,
		50 (1979), p.~1.
		
		\bibitem{belinski2001gravitational}
		{\sc V.~Belinski and E.~Verdaguer}, {\em Gravitational solitons}, Cambridge
		University Press, 2001.
		
		\bibitem{belinskii1978integration}
		{\sc V.~Belinski and V.~Zakharov}, {\em Integration of the Einstein equations
			by means of the inverse scattering problem technique and construction of
			exact soliton solutions}, Sov. Phys.-JETP (Engl. Transl.);(United States), 48
		(1978).
		
		\bibitem{BONDI1957}
		{\sc H.~Bondi}, {\em Plane gravitational waves in general relativity}, Nature,
		179 (1957), pp.~1072--1073.
		
		\bibitem{carmeli1984einstein}
		{\sc M.~Carmeli and C.~Charach}, {\em The Einstein-Rosen gravitational waves
			and cosmology}, Foundations of Physics, 14 (1984), pp.~963--986.
		
		\bibitem{Christodoulou1986}
		{\sc D.~Christodoulou}, {\em Global solutions of nonlinear hyperbolic equations
			for small initial data}, Communications on Pure and Applied Mathematics, 39
		(1986), pp.~267--282.
		
		\bibitem{sogge}
		{\sc S.~Christopher}, {\em Lectures on Non-Linear Wave Equations},
		International Press of Boston, Incorporated; 2nd Revised edition (September
		18, 2013), 2013.
		
		\bibitem{devchand1998hidden}
		{\sc C.~Devchand and J.~Schiff}, {\em Hidden symmetries of the principal chiral
			model unveiled}, Communications in Mathematical Physics, 190 (1998),
		pp.~675--695.
		
		\bibitem{economou1989}
		{\sc A.~Economou and D.~Tsoubelis}, {\em Multiple-soliton solutions of
			Einstein’s equations}, Journal of Mathematical Physics, 30 (1989),
		pp.~1562--1569.
		
		\bibitem{Einstein1937}
		{\sc A.~Einstein and N.~Rosen}, {\em On gravitational waves}, Journal of the
		Franklin Institute, 223 (1937), pp.~43--54.
		
		\bibitem{faddeev1986integrability}
		{\sc L.~Faddeev and N.~Y. Reshetikhin}, {\em Integrability of the principal
			chiral field model in 1+ 1 dimension}, Annals of Physics, 167 (1986),
		pp.~227--256.
		
		\bibitem{fustero1986einstein}
		{\sc X.~Fustero and E.~Verdaguer}, {\em Einstein-Rosen metrics generated by the
			inverse scattering transform}, General Relativity and Gravitation, 18 (1986),
		pp.~1141--1158.
		
		\bibitem{yaronhadad_2013}
		{\sc Y.~Hadad}, {\em Integrable Nonlinear Relativistic Equations}, PhD thesis,
		University of Arizona, 2013.
		
		\bibitem{haider2008}
		{\sc B.~Haider and M.~Hassan}, {\em The $U(n)$ chiral model and exact
			multi-solitons}, Journal of Physics A: Mathematical and Theoretical, 41
		(2008), p.~255202.
		
		\bibitem{huneau2021global}
		{\sc C.~Huneau and A.~Stingo}, {\em Global well-posedness for a system of
			quasilinear wave equations on a product space}, arXiv preprint
		arXiv:2110.13982,  (2021).
		
		\bibitem{john1974}
		{\sc J. Fritz}, {\em Formation of singularities in one-dimensional nonlinear wave propagation}, Communications on Pure and Applied Mathematics, 27(3)(1974), pp.~377--405.  
		
		
		\bibitem{john1990nonlinear}
		{\sc J. Fritz}, {\em Nonlinear wave equations, formation of singularities},
		American Mathematical Soc., 1990.
		
		\bibitem{Kasner1921}
		{\sc E.~Kasner}, {\em Geometrical theorems on Einstein cosmological
			equations}, American Journal of Mathematics, 43 (1921), p.~217.
		
		\bibitem{klainerman}
		{\sc S.~Klainerman}, {\em The null condition and global existence to nonlinear
			wave equations}, Nonlinear Systems of Partial Differential Equations in
		Applied Mathematics, Part, 1 (1986), pp.~293--326.
		
		\bibitem{kompaneets} {\sc A.S. Kompaneets}, {\em Strong gravitational waves in free space}, Soviet Physics Jetp-USSR, 7(4) (1958), pp.~659--660. 
		
		\bibitem{krasinski2006inhomogeneous}
		{\sc A.~Krasi{\'n}ski}, {\em Inhomogeneous cosmological models}, Cambridge
		University Press, 2006.
		
		\bibitem{letelier1985static}
		{\sc P.~S. Letelier}, {\em Static and stationary multiple soliton solutions to
			the Einstein equations}, Journal of Mathematical Physics, 26 (1985),
		pp.~467--476.
		
		\bibitem{letelier1986}
		{\sc P.~S. Letelier}, {\em Soliton solutions to
			the vacuum Einstein equations obtained from a nondiagonal seed solution},
		Journal of Mathematical Physics, 27 (1986), pp.~564--567.
		
		\bibitem{lindblad2008}
		{\sc H.~Lindblad}, {\em Global solutions of quasilinear wave equations},
		American Journal of Mathematics, 130 (2008), pp.~115--157.
		
		\bibitem{lindblad2010}
		{\sc H.~Lindblad and I.~Rodnianski}, {\em The global stability of Minkowski
			space-time in harmonic gauge}, Annals of Mathematics,  (2010),
		pp.~1401--1477.
		
		\bibitem{Luli2018}
		{\sc G.~K. Luli, S.~Yang, and P.~Yu}, {\em On one-dimension semi-linear wave
			equations with null conditions}, Advances in Mathematics, 329 (2018),
		pp.~174--188.
		
		\bibitem{Matzner1967}
		{\sc R.~A. Matzner and C.~W. Misner}, {\em Gravitational field equations for
			sources with axial symmetry and angular momentum}, Physical Review, 154
		(1967), pp.~1229--1232.
		
		{\color{blue}
		\bibitem{MT2023} 
		{\sc C.~Mu\~noz and J. Trespalacios}, {\em  Global Existence and Long Time Behavior in Einstein-Belinski-Zakharov soliton spacetimes,} arXiv:2305.01414. Submitted to Trans. AMS .
		}
		
		\bibitem{Novikov}
		{\sc L.~P. V.~Z. S.~Novikov, S.~Manakov}, {\em Theory of Solitons, The Inverse
			Scattering Method}, Springer US, 1984.
		
%			\bibitem{pusateri2013space}
%		{\sc F. Pusateri}, {\em Space-time resonances and the null condition for wave equations},
%		Boll. Unione Mat. Ital.(9), 6(2013), pp.~513--529.
%		
		\bibitem{sarma2010kdv}
		{\sc D. Sarma, and M. Patgiri}, {\em KdV solitons in Einstein's vacuum field equations},
		arXiv preprint arXiv:1003.2678, 2010.
		
		\bibitem{silva2019scaling}
		{\sc E.~Silva and W.~L. Souza}, {\em Scaling symmetries and conservation laws
			for variable-coefficients nonlinear dispersive equations}, TEMA (S\~ao
		Carlos), 20 (2019), pp.~429--443.
		
		\bibitem{wald2010general}
		{\sc R.~M. Wald}, {\em General relativity}, University of Chicago Press, Chicago, IL, 1984. xiii+491 pp. ISBN: 0-226-87032-4; 0-226-87033-2.
		%University of Chicago press, 1984.
		
		\bibitem{WYan2019}
		{\sc Weiping Yan}, {\em Dynamical behavior near explicit self-similar blow–up solutions for the Born-Infeld equation}, 2019 Nonlinearity 32 4682.
		
		\bibitem{wald2000}
		{\sc R.~M. Wald and A.~Zoupas}, {\em General definition of ``conserved
			quantities'' in general relativity and other theories of gravity}, Phys. Rev.
		D, 61 (2000), p.~084027.
		
			\bibitem{zha2020one}
		{\sc D. Zha}, {\em On one-dimension quasilinear wave equations with null conditions},
		Calculus of Variations and Partial Differential Equations, 59 (2020), pp.~1--19.
		
		\bibitem{Zakharov1978}
		{\sc V.~Zakharov and A.~Mikhailov}, {\em Relativistically invariant two
			dimensional models of field theory integrable by inverse scattering problem
			method}, Sov. Phys. JETP, 47 (1978), pp.~1017--1027.
		
		\bibitem{zakharov1980int}
		{\sc V.~E. Zakharov and A.~Mikhailov}, {\em On the integrability of classical
			spinor models in two-dimensional space-time}, Communications in Mathematical
		Physics, 74 (1980), pp.~21--40.
		
		\bibitem{Zakharov1979}
		{\sc V.~E. Zakharov and A.~B. Shabat}, {\em Integration of nonlinear equations
			of mathematical physics by the method of inverse scattering. {II}},
		Functional Analysis and Its Applications, 13 (1979), pp.~166--174.
		
	\end{thebibliography}
	
	%\begin{thebibliography}{99}
	
	%\bibitem{ABC} ABC
	
	%\bibitem{DFG} DFG
	
	%\end{thebibliography}
	
\end{document}